\providecommand{\keywords}[1]
{
  \small	
  \textbf{\textit{Keywords---}} #1
}
\renewcommand{\Op}[1]{\mathbf{#1}}
\newcommand{\Fall}{\Op{F}}
\newcommand{\Gall}{\Op{G}}
\newcommand{\GHall}{\Op{M}}
\newcommand{\CDC}{\mathbf{\Gamma}}
\newcommand{\Gdall}{\Op{H}}
\newcommand{\Gdrift}[1]{\Op{G}^{(D)}_{#1}}
\newcommand{\Gdiff}[1]{\Op{G}^{(B)}_{#1}}
\newcommand{\Gjump}[1]{\Op{G}^{(J)}_{#1}}
\newcommand{\Gjumpneg}[1]{\Op{G}^{(J-)}_{#1}}
\newcommand{\typeS}{\mathcal{T}} 
\newcommand{\DWG}{\Op{W}}
\newcommand{\PSSMPG}{\Op{K}}
\newcommand{\LevyMeasure}{\mathtt{\Pi}}
\newcommand{\FVG}{\Op{R}}
\newcommand{\SSS}{\Op{S}} 
\newcommand{\perms}{\mathtt{Perms}}
\newcommand{\drift}{\kappa}
\newcommand{\driftF}{d}
\newcommand{\nun}{\nu^{(n)}}
\newcommand{\xin}{\xi^{(n)}}
\newcommand{\V}{\mathbb{V}}
\newcommand{\abstime}{\mathtt{\tau}}
\begin{document}
\title{The Lamperti transformation in the infinite-dimensional setting, self-similar populations, and coalescents.}

\author[1]{Arno Siri-Jégousse}
\author[2]{Alejandro H. Wences}
\affil[1,2]{IIMAS, Universidad Nacional Aut\'onoma de M\'exico}

\maketitle
\begin{abstract}
{We propose a change in focus from the prevalent paradigm based on the branching property as a tool to analyze the structure of population models, to one based on the self-similarity property, which we also introduce for the first time in the setting of measure-valued processes. 
By extending the well-known Lamperti transformation into the infinite dimensional setting, we were able to embed and extend known results in population genetics within the self-similarity framework: we describe the frequency process of a larger class of measure-valued SS populations in terms of general Lambda Fleming-Viot processes.  Our results demonstrate the potential power of the self-similar perspective for the study
of populations whose total size varies stochastically over time, and in which the reproduction dynamics of the individuals are not independent from one another but are modulated by the total size of the population, allowing for more complex and realistic models. We also uncover a new duality relation between measure-valued processes and Lambda-coalescents which extends the well-known duality relation between Lambda Fleming-Viot processes and Lambda coalescents.}
\end{abstract}
\keywords{self-similarity, Lamperti transformation, measure-valued processes, coalescent processes, duality.}

\section{Introduction and main results}
    Consider a stable measure-valued branching
    process $\stp{\mu_t}$ with stability parameter $\beta\in (0,2]$. Formally, the latter are Markov processes on the state space $\sM(\typeS)$
    of finite positive measures on the type space $\typeS$, and with generator $\Fall$ having one of the two forms:
    \begin{itemize}
\item  
 $
\Fall F(\mu) = \int_\typeS\mu(da)\int_0^\infty h^{-1-\beta}dh \bigg\{ F\left(\mu + h \delta_a\right)
     - F(\mu) -  h F'(\mu; a)\bigg\},
    $
     for $\beta\in(0,2)$, or
\item $\Fall F(\mu) = 
\int_\typeS \mu(da) F''(\mu;a,a), 
 $ 
 which  heuristically corresponds to the case $\beta=2$.\\
  \end{itemize}
  (In the above, for simplicity, we have removed the possible additional drift and constant scaling terms). Here, the derivative $F'(\cdot;a)$ (resp. $F''(\cdot;a,b)$) refers to the simple (resp. double) directional Gateaux derivatives in the direction of $\delta_a$ (resp. $\delta_a$ and then $\delta_b$), as defined in section \ref{Sec:LFV} below. 
  Let $\norm{\mu_t}\coloneqq\mu_t(\typeS)$ be the total population size at time $t$. In the same spirit as in \cite{BertoinLeGall2000,Perkins1992}, the work of \cite{les7}
  characterized 
  the evolution of the frequency of types $\stp{\mu_t/\norm{\mu_t}}$ by the following:
    the authors show that, after the stochastic time change
    $c_{{\beta-1}}(t)=\inf\{s\geq 0\colon \int_0^s \norm{\mu_u}^{1-\beta} du\geq t\},$ the frequency process becomes Markov and
    is a member of the Beta subfamily of $\Lambda$-Fleming-Viot (FV)
    processes \cite{BertoinLeGall2003}. The genealogy of the underlying population can be first understood via the well-known duality relation between $\Lambda$-FV processes  and $\Lambda$-coalescents \cite{BertoinLeGall2003}. In \cite{les7} a formal characterization
    of the genealogy is provided by 
    extending this duality relation into the path-wise setting via a lookdown construction  \cite{DonnellyKurtz1999}. In this paper
    we introduce a novel class of self-similar (SS) population models whose time-changed frequency process are in duality with
    general $\Lambda$-coalescents. Given the extent of the current paper, we leave the development of lookdown constructions for general self-similar populations for future work, and only concentrate on describing duality relations in expectation that, by incorporating both the frequency process and the total population size, complement the work of \cite{BertoinLeGall2003}.

    The family of $\Lambda$-FV (resp. $\Lambda$-coalescents) generalizes the standard FV process (resp. the Kingman coalescent) to capture the population genetics dynamics of a wider range of neutral populations, including populations with highly skewed offspring distributions (but also many more models, including models with selection). 
    Still,  \cite{les7} 
    proved that {only the Beta} subfamily can be obtained from a branching process by using their method
    based on path-wise
    random time changes and dualities. For the rest of the super-processes (measure-valued branching processes), the associated frequency process is not Markov in itself under any random time change that is written in terms of the total population size (see their Lemma 3.5 for further details). This apparent lack of a branching process counterpart for the rest of the $\Lambda$ family has motivated research seeking variants of the 
    main results in \cite{les7} to obtain different $\Lambda$-FV (resp. $\Lambda$-coalescents) 
    for the frequency process (resp.  genealogy) of branching processes, but using different transformations. An example is the culling procedure in \cite{CaballeroCasanovaPerez2023} (see also \cite{CasanovaNunezPerez2023}) who work
    with the two-dimensional counterparts of FV processes, { and who approximate populations with constant size and obtain results in distribution, losing the path-wise quality. Another example is the work of \cite{JohnstonLambert2023}
    who study the genealogy of multi-type branching processes locally in time by computing 
    the corresponding coalescence rates, which notably depend on the size of the populations of each of the finitely-many types.} In our case it has motivated a change of perspective by now focusing on the self-similarity property, enabling us to use robust path-wise tools such as 
    the Lamperti transformation in the infinite-type setting. {This technique also allows us to incorporate
    dependencies in the reproduction dynamics of the individuals, mainly through the total population size, accommodating in a simple way the effect of this size on the evolution of the frequency process and the corresponding coalescent dual.}

Let us introduce self-similar measure-valued Markov processes and the related Markov additive property.
\begin{definition}[Self-similar process] 
A Markov process $\stp{\mu_t}$ taking values in the space of finite and positive measures $\sM(\typeS)$, and more generally {a  Markov process 
$\stp{X_t}$ taking values in a linear conic space},
is said to satisfy
the \emph{self-similarity} property with index {$\alpha\in\R$} (denoted $\alpha$-SS) if
\begin{equation}\label{eq:ssDef}
\forall a>0, \quad \law\(X_t \mid \P^X_x\) = \law\(aX_{a^{-\alpha}t} \mid \P^X_{a^{-1}x}\).
\end{equation}
\end{definition}
\begin{definition}[Markov additive processes (MAP)]
A two-coordinate process $\stp{A_t}=\stp{\rho_t,\xi_t}$ with lifetime $\abstime^A$ and taking values on $S\times\R$, 
with $S$ being {any measurable space, typically the unit sphere of a normed vector space},
is said to 
be a \emph{Markov additive process}
if for any $y\in S$, $z\in\R$, $s,t\geq0$, and for any positive measurable function $f$ on
$S\times\R$, one has
\begin{equation}\label{eqMAP}
\E^A_{{\theta},z}\[f(\rho_{t+s}, \xi_{t+s}-\xi_t), t+s<\abstime^A\given{\mathcal F_t}\] 
= \E^A_{\rho_t,0}\[f(\rho_s,\xi_s), s<\abstime^A\]\Ind{t<\abstime^A}.
\end{equation}
\end{definition}
   Recalling the notation  $\norm{\mu}= \mu(\typeS)$, and assuming that $\typeS$ is any compact and polish space, we obtain the following result: 
    there exists a standard measure-valued $\alpha$-SS Markov  process $\stp{\mu_t}$ with generator of the form
    \begin{align}\label{eq:generatormvssMp}
    \Fall_\alpha F(\mu)=& \frac{1}{\norm{\mu}^\alpha} \left( \Gdrift{\drift}F(\mu) + \Gdiff{\sigma}F(\mu) + \Gjump{\Lambda} F(\mu)\right) 
    \end{align}
    where, for $\sigma\geq 0, \drift\in\R$, and {$\Lambda\in\sM((0,1))$}, the above operators 
    are defined by
    \begin{align*} 
\Gdrift{\drift}F(\mu)&= \int_{\typeS} \mu(da) \drift F'(\mu; a),\\
\Gdiff{\sigma}F(\mu)&= \norm{\mu} \int_\typeS \mu(da)\frac{\sigma^2}{2} F''(\mu;a,a),\text{ and }\nonumber\\
\Gjump{\Lambda}F(\mu)&=\int_{\typeS} \frac{\mu(da)}{\norm{\mu}} \int_{{(0,1)}}\frac{\Lambda(d\zeta)}{\zeta^2} \bigg\{ F\left(\mu + \norm{\mu}\frac{\zeta}{1-\zeta} \delta_a\right)\\
      &\quad\quad- F(\mu) - \norm{\mu}(\abs{\log(1-\zeta)}\Ind{\zeta<1/2}) F'(\mu; a)\bigg\}.
\end{align*}
The state $\mu=0$ (the zero measure) is absorbing for this Markov process. Furthermore, letting $\abstime^\mu\coloneqq \inf\{t\geq 0\colon \norm{\mu_t}=0\}$ be the extinction time of the population and setting
$$ c_\alpha(t)=\inf\left\{s\geq 0\colon \int_0^s \norm{\mu_u}^{-\alpha} du\geq t\right\}, \quad t\in \left[0,\int_0^{\abstime^\mu}\norm{\mu_u}^{-\alpha} du\right];$$ 
the process $\stp{\frac{\mu_{c_\alpha(t)}}{\norm{\mu_{c_\alpha(t)}}},\log(\norm{\mu_{c_\alpha(t)}})}$ is a MAP. Its first coordinate is a $(\Lambda+\sigma^2\delta_0)$-Fleming-Viot process; whereas the second coordinate is {the Lévy process with Lévy–Khintchine characteristic triplet $(-\sigma+\drift, {\sigma}, \LevyMeasure)$. Here $\LevyMeasure(d\zeta)$ is the pushforward of the measure $\zeta^{-2}\Lambda(d\zeta)$  under the transformation $\zeta\to -\log(1-\zeta)$ on $(0,1)$}.
\par
Furthermore, the reproduction dynamics of $\stp{\mu_t}$ have the following interesting biological interpretation: the operator   
\begin{align}\label{eq:generatormvMp}
	\Gall\coloneqq \Fall_0\equiv \Gdrift{\drift} + \Gdiff{\sigma} + \Gjump{\Lambda}, 
\end{align}
is in fact the generator of a measure-valued Markov process $\stp{\nu_t}$ describing a population whose total size $\stp{\norm{\nu_t}}$ evolves as the exponential of a Lévy process (see Theorem \ref{th:MVPsmhExist}). 
Then, the extra scaling term  $\norm{\mu}^{-\alpha}$ appearing in \eqref{eq:generatormvssMp} can be interpreted as a regulator of the overall reproduction rate of the entire population as a function of the current total population size;
where $\alpha$ is a parameter specifying the 
strength of this modulation. The total size of the population $\stp{\norm{\mu_t}}$, however, does not reach a constant equilibrium but rather fluctuates stochastically over time as a positive $\alpha$-SS Markov process {on $[0,\infty)$} with non-negative jumps. Moreover, 
 $\stp{\mu_t}$ can be constructed via a self-similar Lamperti time change $\gamma_\alpha(t)$  of the process $\stp{\nu_t}$ ---see Theorems \ref{th:banachLamperti} and \ref{th:mvpLamperti} below for details---.  Since
$\stp{\norm{\nu_t}}$ evolves as the exponential of a Lévy process, the original work of Lamperti \cite{Lamperti1972} ensures that the total size
of the time-changed process $\stp{\norm{\mu_t}}=\stp{\norm{\nu_{\gamma_\alpha(t)}}}$ is in fact a positive self-similar Markov process. Conversely, $\stp{\nu_t}$ can be recovered back via  the ``reverse'' self-similar Lamperti time change $c_\alpha(t)$ of the process $\stp{\mu_t}$. \par
Looking back at  $\Gjump{\Lambda}$ in the generator \eqref{eq:generatormvMp} of the process $\stp{\nu_t}$
	we observe that it has jumps of the form
	$$
	\nu \to \nu + \norm{\nu}x\delta_a
	$$
	where a new atom is added. The location of the new atom $a\in\typeS$ is chosen according to the empirical distribution $\nu/\norm{\nu}$ before the jump. Importantly, its size $\norm{\nu}x$ depends on the total mass of the population $\norm{\nu}$, thus
	these processes will not be branching processes in general, nor will be their time-changed counterparts $\stp{\mu_t}$. 
	However, the frequency process $\stp{\nu_t/\stp{\norm{\nu_t}}}$ 
	is Markov with jumps of the form
	\begin{align*}
		\rho=\frac{\nu}{\norm{\nu}} &\to \frac{\nu}{\norm{\nu}} \left(\frac{1}{1+x}\right) + \frac{x}{1+x}\delta_a\\
		&= \rho  \left(\frac{1}{1+x}\right) + \frac{x}{1+x}\delta_a.
	\end{align*}
	The particular choice $x=\zeta / (1-\zeta)$ in $\Gjump{\Lambda}$ ensures that the resulting dynamics are exactly those of the $\Lambda$-FV processes (see e.g. \eqref{eq:FVgenerator}). Thus, after time-changing the process $\stp{\mu_t}$ by $c_\alpha(t)$ to recover such a process $\stp{\nu_t}$, and then renormalizing, one
	obtains a $\Lambda$-FV process.\par 
    By picking the right 
	combination of the parameters $\alpha$ and $\Lambda\equiv\Lambda_\alpha$ in \eqref{eq:generatormvssMp} for the dynamics of the process $\stp{\mu_t}$
	one can show, via a simple computation on the generator, 
	that the sources of dependency of the reproduction dynamics on the total mass $\norm{\mu}$ can be canceled out. The resulting process $\stp{\mu_t}$ is, in this case, a  stable branching process. Thus, our results recover the characterization in \cite{les7} of the frequency process of $\beta$-stable measure-valued branching process in the cases when $\beta$, the stability index therein, satisfies $\beta\in(1,2)$. Indeed, it can be seen that these processes enjoy the $(\beta-1)$-SS property, as seen through an extension of the result in \cite{KyprianouPardo2008} from $\beta$-stable $\R_+$-valued branching processes into the measure-valued setting. Time-changing  $\stp{\mu_t}$
	by the Lamperti time change $c_{\beta-1}(t)$ results in a process $\stp{\nu_t}$ whose frequencies evolve as a Beta-FV, see Remark \ref{rem:alfaStableCase} below.\par
	Finally, a key element in the construction of the process $\stp{\nu_t}$ is the characterization of a new duality relation (Theorem \ref{th:duality}) between $\stp{\nu_t}$ and a two-coordinate process $\stp{\Pi_t,Z_t}$. The latter consists of a $\Lambda$-coalescent $\stp{\Pi_t}$
	that is coupled with the exponential of a Lévy process $\stp{Z_t}$. By keeping the information of the total population size $\norm{\nu_t}$, this duality relation extends that described in \cite{BertoinLeGall2003} that only considers (normalized) FV processes and $\Lambda$-coalescents.
\par
Our method is based on a generalization of the Lamperti transformation \cite{Lamperti1972,ChaumontPantiRivero2013, AliliChaumont2017} into the infinite-dimensional
 setting, which we now describe (see section \ref{sec:BanachLamperti} for further details). Let $E$ be a conic subset
 of a normed vector space $(\V,\norm{\cdot})$, and suppose that 
 $(E,\dist)$ is a metric space such that the map $x\to\norm{x}$ is continuous.
 We also assume $(E,\dist)$
to be locally-compact and second-countable, and augment it to $\hat E=E\cup\{\pinfty\}$ 
where $\pinfty$ is a point at infinity if $E$ is not compact, or just an isolated point 
if $E$  is compact. In the application above, $E$ is the space of {finite positive measures $\sM(\typeS)$ over a compact and Polish type space $\typeS$ with at least countably  many elements (this is no true restriction since we can always add countably many elements to $\typeS$
	and still obtain a compact Polish space). The space $\sM(\typeS)$ is endowed with the total variation norm
which coincides with
$\norm{\mu}=\mu(\typeS)$ in this setting. However, the distance $\dist$ that we use on $\sM(\typeS)$ 
is any metrization of the weak topology on $\sM(\typeS)$.} \par 
We consider only standard \cadlag Markov process on $\hat E$
that are absorbed at $\{\pinfty,0\}$. 
For such a  process $X$ we write  
 $\abstime_\pinfty^X$ 
for its absorption time to $\pinfty$, $\abstime^X_0$ for its absorption time to $0\in E$, and define 
its lifetime $\abstime^X\coloneqq \abstime_\pinfty^X\wedge \abstime_0^X$.
The self-similar Lamperti transformation can be expressed as follows.

\begin{theorem}\label{th:banachLamperti}
Let $\stp{X_t}$ be a standard $\alpha$-SS Markov process. Consider the additive functional 
$t\to\int_0^t\norm{X_u}^{-\alpha}du$ for $t\in [0,\abstime^X]$, and its generalized inverse
$$
c_\alpha(t)\coloneqq \inf\left\{s>0\colon \int_0^s\norm{X_u}^{-\alpha}du\geq t\right\}, \quad t\in\left[0,\int_0^{\abstime^X}\norm{X_u}^{-\alpha}du\right].
$$
Then the process $\stp{A_t}=\stp{\frac{X_{c_\alpha(t)}}{\norm{X_{c_\alpha(t)}}},\log(\norm{X_{c_\alpha(t)}})}$ is a standard MAP 
with lifetime $\abstime^A
\aseq \int_0^{\abstime^X}\norm{X_u}^{-\alpha}du$.\par

Conversely,  let $\stp{A_t}=\stp{\rho_t,\xi_t}$  be  a standard MAP, 
and let 
{$\abstime^{A}=\abstime_\pinfty^{e^\xi}\wedge \abstime^{e^\xi}_0$} be its lifetime, after which it is absorbed
at some extra state.
For any $\alpha\geq 0$, consider the inverse additive functional
$$
\gamma_\alpha(t)\coloneqq \inf\lbr s>0\colon \int_0^s e^{\alpha \xi_u} du\geq t\rbr, \quad t\in\left[0,\int_0^{\abstime^{A}} e^{\alpha \xi_u} du\right].
$$ 
Then the process $\stp{X_t}=\stp{\rho_{\gamma_\alpha(t)}e^{\xi_{\gamma_\alpha(t)}}}$ is a standard  $\alpha$-SS Markov process with lifetime $\abstime^X=\abstime_\pinfty^X\wedge \abstime_0^X\aseq \int_0^{\abstime^{A}} e^{\alpha \xi_u} du$.
\end{theorem}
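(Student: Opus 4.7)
The plan is to mirror the classical Lamperti transformation between positive self-similar Markov processes and Lévy processes---now Markov additive processes---but with two key generalizations: the state lives in a cone of an infinite-dimensional normed space rather than in $\R_+$, and the ``angular'' component $X/\norm{X}$ plays the role of the first coordinate of the MAP while $\log\norm{X}$ plays the additive role. The core identity driving both directions is the scaling behavior of the clock $c_\alpha$ (resp.\ $\gamma_\alpha$) under the $\alpha$-SS (resp.\ shift) property of the underlying process.

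For the forward direction, I would first verify that $Y_t := X_{c_\alpha(t)}$ defines a \cadlag\ Markov process on $\hat E$: this follows from the strong Markov property of $X$ applied at the stopping times $c_\alpha(t)$, together with the fact that $c_\alpha(t+s)-c_\alpha(t)$ is measurable with respect to the post-$c_\alpha(t)$ $\sigma$-field of $X$. To establish the additive property, fix $y\in E$ and write $y=a\theta$ with $a=\norm{y}$, $\theta=y/\norm{y}$. The $\alpha$-SS identity \eqref{eq:ssDef} gives $\stp{X_u^{(y)}}\stackrel{d}{=}\stp{a\,X_{a^{-\alpha}u}^{(\theta)}}$; substituting this into the definition of $c_\alpha$ and a change of variables in the integral yields the scaling $c_\alpha^{(y)}(s)=a^\alpha c_\alpha^{(\theta)}(s)$, and hence $\stp{Y_s^{(y)}}\stackrel{d}{=}\stp{a\,Y_s^{(\theta)}}$ jointly in $s$. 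Taking norms and logarithms, $(Y_s^{(y)}/\norm{Y_s^{(y)}},\,\log\norm{Y_s^{(y)}}-\log a)$ has the same law as $(Y_s^{(\theta)}/\norm{Y_s^{(\theta)}},\,\log\norm{Y_s^{(\theta)}})$, which combined with the Markov property of $Y$ is precisely \eqref{eqMAP}.

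For the converse, set $X_t := \rho_{\gamma_\alpha(t)}\,e^{\xi_{\gamma_\alpha(t)}}$. Its Markov property follows from the MAP property of $(\rho,\xi)$ and the fact that $\gamma_\alpha(t+s)-\gamma_\alpha(t)$ is measurable with respect to the post-$\gamma_\alpha(t)$ $\sigma$-field of $(\rho,\xi)$, using the shift-invariance of the $\xi$-dynamics built into \eqref{eqMAP}. For self-similarity, starting the MAP at $(\theta,\log a)$ gives $\xi_u^{(\theta,\log a)}=\log a+\xi_u^{(\theta,0)}$ with $\rho$ having identical law, so $\int_0^s e^{\alpha\xi_u^{(\theta,\log a)}}du = a^\alpha\int_0^s e^{\alpha\xi_u^{(\theta,0)}}du$, whence $\gamma_\alpha^{(\theta,\log a)}(t)=\gamma_\alpha^{(\theta,0)}(a^{-\alpha}t)$ and therefore $X_t^{(a\theta)}\stackrel{d}{=}a\,X_{a^{-\alpha}t}^{(\theta)}$, which is \eqref{eq:ssDef}.

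The main obstacle is not the algebra above but the verification of the ``standard'' regularity conditions (\cadlag\ paths, strong Markov property, quasi-left continuity, correct behavior at the absorbing points $\{0,\pinfty\}$) in the infinite-dimensional conic setting, where the joint measurability of $(t,\omega)\mapsto X_{c_\alpha(t)}(\omega)$ and continuity of $\mu\mapsto\norm{\mu}$ in the weak topology must be handled with some care. In particular, one must show that the lifetime $\abstime^X$ accounts for absorption at both $0$ (when $\xi_t$ drifts to $-\infty$ in finite time) and at $\pinfty$ (when $\xi$ explodes), and that the a.s.\ identities $\abstime^A\aseq \int_0^{\abstime^X}\norm{X_u}^{-\alpha}du$ and $\abstime^X\aseq \int_0^{\abstime^A} e^{\alpha\xi_u}du$ hold throughout; this requires an endpoint analysis analogous to the classical pssMp/Lévy dichotomy of Lamperti but now with the angular coordinate $\rho$ potentially contributing to the hitting of the absorbing states.
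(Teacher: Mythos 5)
Your proposal follows essentially the same route as the paper: the clock-scaling identity $c_\alpha^{(a\theta)}(s)=a^{\alpha}c_\alpha^{(\theta)}(s)$ (resp.\ $\gamma_\alpha^{(\theta,\log a)}(t)=\gamma_\alpha^{(\theta,0)}(a^{-\alpha}t)$) followed by the log-polar map is exactly the paper's argument, except that the paper names the intermediate object --- your time-changed process $Y$ satisfying $Y^{(a\theta)}\overset{d}{=}aY^{(\theta)}$ is what it calls a scalar multiplicative homogeneous (SMH) process --- and splits the proof into Theorem \ref{th:smhssbijection} (SS $\leftrightarrow$ SMH via the time change) and Proposition \ref{prop:SMHtoMAP} (SMH $\leftrightarrow$ MAP via $\Phi$). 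The regularity and lifetime issues you correctly flag at the end are resolved in the paper by appealing to the Blumenthal--Getoor theory of continuous strong additive functionals and their time changes, together with a separate measurability lemma (Lemma \ref{le:TimeChangeDMaeasurable}) and an explicit identification of $c_\alpha$ with $\int_0^t\norm{Y_s}^{\alpha}ds\wedge S_Y$ at the endpoint.
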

Our results show an unexplored link between the fields of mathematical population genetics and self-similar Markov processes in infinite dimensions, motivating new research and opening new questions in both of these fields separately, but also at their intersection.
 For instance, the populations driven by 
$\Fall_{\alpha}$ above, characterized by four parameters (the Lévy triplet and the self-similarity index)
 constitute only a sub-class of the entire family of measure-valued self-similar processes
which is yet to be fully described.  
Whether one will need a new model to describe their corresponding genealogies, outside or extending the family of $\Lambda$ (and more generally $\Xi$) coalescent processes, is still an open question. At the same time, measure-valued processes, together with the well-established analytic tools available in population genetics such as duality methods, may serve as a suitable template for the development of the theory of self-similar Markov processes and their Lamperti transformations 
in infinite dimensions.

{\bf Structure of the manuscript: }
{the rest of the manuscript is structured as follows. Section \ref{sec:BanachLamperti} is devoted to 
the development of our main methodological result, the Lamperti transformation in the infinite-dimensional setting (Theorem \ref{th:banachLamperti}). On the other hand, our main phenomenological results for self-similar populations are
presented in section \ref{sec:genealogySSMVps}, which is preceded by section \ref{sec:preliminaries} in which we describe
some necessary preliminaries on coalescent processes, FV processes, and Dawson-Watanabe processes. 
In turn, the proofs of our results of section \ref{sec:genealogySSMVps} are organized as follows: 
the results on the Lamperti  transformations of $\stp{\mu_t}$ and $\stp{\nu_t}$ are proved in section \ref{sec:MVLampertiTransformation}, making use of the general theory  developed  in section \ref{sec:BanachLamperti}. On the other hand, preparing for the construction of the process $\stp{\nu_t}$, 
in section \ref{sec:technicalAndGenerators} we provide moment bounds for the exponential of a Lévy process,
as well as regularity results on the generators of $\stp{\nu_t}$ and
its dual $\stp{\Pi_t,Z_t}$. These technical results are used in section \ref{sec:duality} to prove their duality relation, which is a key ingredient, in sections \ref{sec:nucons} and \ref{sec:dualConstruction}, for the formal construction
of the processes $\stp{\nu_t}$ and $\stp{\Pi,Z_t}$ via martingale problem characterizations. }
\par

{\bf Notation: }
we let $\Rp=[0,\infty)$.
The symbol $\Bo(\cdot)$ stands for the Borel $\sigma$-algebra in any topological space.  Also $\cf(\cdot)$ (resp. $\bof(\cdot)$) will refer to the space of $\R$-valued continuous (resp. measurable) 
functions defined on some topological (resp. measurable) space; whereas
$\cbf(\cdot)$ (resp. $\bbf(\cdot)$) refers to its bounded counterpart. The symbol $\czdf{k}$ refers to
the space of continuous functions vanishing at infinity and that have a continuous $k$-th derivative, and $\cKdf{k}$ refers to the
subspace of functions with compact support.\par 
We will also write $f(x)=\bO(g(x))$ as $x\to x_0$ whenever $f(x)/g(x) \toas{x\to x_0}C\in \Rp$, and
$f(x)=\lo(g(x))$ as $x\to x_0$ whenever $f(x)/g(x) \toas{x\to x_0}0.$ Also we will write that $f_n$ converges to $f$ boundedly point-wise if $\sup_n \norm{f_n}_\infty<\infty$ and $f_n\toas{n\to\infty} f$ point-wise.\par
Also $\sM(\typeS)$ (resp. $\sPM(\typeS)$) refers to the space of finite positive (resp. probability) measures on the polish and compact space $\typeS$.\par
Additionally, we will denote by $(\A,D)$ an operator defined
on a set of functions $D$, and also write $(\A_1,D_1)\subset(\A_2,D_2)$ to mean that $D_1\subset D_2$ and $\A_2=\A_1$ on $D_1$. Also, $\D(\A)$  refers to the domain of a closed operator $\A$.

\section{The Lamperti transformation in normed spaces}\label{sec:BanachLamperti}
In this section we get inspiration from the works of \cite{Lamperti1972, ChaumontPantiRivero2013, AliliChaumont2017} and generalize the self-similar Lamperti transformation to processes taking values in a conic subset
$E$ of a normed space $(\V,\norm{\cdot})$.
Some of the arguments used in our main proofs  can  be found
in \cite{Lamperti1972, AliliChaumont2017, HassMiermont2011}, we expand them to general state spaces and also deal with possible
explosion of the processes involved. 
We assume that all the processes
that we consider are standard in the sense of Definition 9.2 in \cite{GetoorBlumenthal1968}; namely that
they satisfy the following:
\begin{enumerate}
    \item their respective filtrations $(\mathcal F_t)_{t\ge0}$ are right-continuous,
       \item they are absorbed at $\pinfty$ at time  $\abstime_\pinfty\in [0,\infty]$,
    \item they are \cadlag and quasi-left-continuous on $[0,\abstime_\pinfty)$,
    \item they are strong-Markov processes with measurable probability kernels 
    $P_t(x,\cdot)$ from $\hat E$ to $\hat E$ corresponding to the law of the process
    at time $t$.
\end{enumerate}\par
Let us call $D([0,\infty),\hat E)$ the trajectory space of such processes, i.e. the
space of trajectories on $\hat E$ that are \cadlag on $[0,\abstime_\pinfty)$ and have constant
value $\pinfty$ on $[\abstime_\pinfty,\infty)$. 
We endow $D([0,\infty),\hat E)\subset {\hat E}^{\Rp}$ with $\mathcal F$, the trace $\sigma$-algebra 
induced by $\sigma(\pi_t; t\geq0)$ 
where $\pi_t$ is the projection at time $t$. Note that when $E$ is a locally-compact second-countable metric space, the extended space
$\hat E$ 
becomes compact and metrizable (see e.g. Proposition VII.1.15 in \cite{Cohn2013}) and
$\mathcal F$ coincides with the Borel $\sigma$-algebra induced by the Skorohod topology on $D([0,\infty),\hat E)$
(Theorem 12.5 in \cite{Billingsley99}). Also we will denote by $\P_x(\cdot)$ the law  on $\hat E^{\Rp}$ of the processes started at $x$. We also recall the notation $\abstime^X\coloneqq \abstime_\pinfty^X\wedge \abstime_0^X$ for the lifetime of such a process $X$.
\par

Recall that an $\hat E$-valued Markov process $(X_t)_{t\ge0}$ 
is said to satisfy
the {self-similarity (SS)} property with index  {$\alpha\in \R$} ($\alpha$-SS) if \eqref{eq:ssDef} holds.
Alternatively, it is easily seen that
the process $X$ is $\alpha$-SS if and only if
for all $t\geq0,a>0,x\in E, B\in\Bo(E)$, {its transition kernels $P^X_t$ satisfy}
\begin{equation*}
P^X_t(x,B) = P^X_{a^{-\alpha}t}\left(a^{-1}x, a^{-1}B\right).
\end{equation*}

\begin{definition}[Scalar multiplicative homogeneous (SMH)]
An $\hat E$-valued Markov process $Y$ 
is said to be  \emph{scalar multiplicative homogeneous (SMH)} if its transition
kernels $P^Y_t$ satisfy, for all $t\geq0,u>0,x\in E, A\in\Bo(E)$,
\begin{equation}\label{eq:kMH}
 P^Y_t(x,A) = P^Y_t(ux,uA),
\end{equation}
or, in other words, if
$$
\forall u>0, \quad \law\(Y_t\mid\P_x^Y\) = \law\(uY_{t}\mid \P_{u^{-1}x}^Y\).
$$
Observe that this property is exactly the self-similarity property of index $\alpha=0$.
\end{definition}

Recall the Markov addtivie property introduced in \eqref{eqMAP}. It can be seen that this is equivalent to the following:
the process $\stp{\rho_t,\xi_t}$ is a MAP if
and only if,
for all $t\geq 0, a\in\R, (\theta,z)\in S\times\R^+$ and $B\in\Bo(S\times\R^+)$,
\begin{equation}\label{eq:kMAP}
P^{A}_t((\theta,z), B) = P^{A}_t((\theta,z+a), B + (0,a));
\end{equation}
where we have written $B+(0,a)\coloneqq \{(\rho,z+a) \colon (\rho,z)\in B\}$. 
This can be interpreted as saying that $\stp{A_t}=\stp{\rho_t,\xi_t}$ is additive-homogeneous on the second coordinate. 
Also we define its lifetime $\abstime^{A}\coloneqq\abstime^\xi_\pinfty$ (the explosion time of $\stp{\xi_t}$ on the one-point compactification of $\R$, after which we assume that $\stp{A_t}$ is absorbed at some extra state).
We refer the interested reader to \cite{KyprianouPardo2022book} for a thorough 
exposition of the subject for $\R^d\times\R$-valued MAPs.

We will establish transformations between SS and SMH processes on the one hand, and
between SMH processes and MAPs on the other. In fact, the latter is simply a bijection given by the ``log-polar decomposition'' isomorphism
$\Phi\colon E\setminus\{0\} \to S\times\R$ defined as 
$\Phi(x)=\left(x/\norm{x}, \log(\norm{x})\right)$. We have the following.

\begin{proposition}[SMH $\iff$ MAP]\label{prop:SMHtoMAP}
Let $Y$ be a SMH Markov process with trajectories in $D([0,\infty),\hat E)$ and absorbed at $0$,
and set $\abstime^Y= \abstime^Y_0 \wedge \abstime^Y_\pinfty$.
Then $\stp{A_t}=\stp{\Phi(Y_t)}$ 
is a MAP with lifetime $\abstime^A=\abstime^Y$. \par
Conversely, 
if $\stp{A_t}=\stp{\rho_t,\xi_t}$ is a MAP with lifetime $\abstime^A= \abstime^{\xi}_\pinfty$, then $Y=\stp{\Phi^{-1}(A_t)}\equiv\stp{e^{\xi_t}\rho_t}$ is a 
SMH Markov process with lifetime $\abstime^Y=\abstime^A$.
\end{proposition}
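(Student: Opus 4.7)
The plan is to exploit the ``log-polar decomposition'' map $\Phi(x)=(x/\norm{x},\log\norm{x})$ as a measurable bijection between $E\setminus\{0\}$ and $S\times\R$, and observe that the SMH scaling property on $Y$ translates under $\Phi$ into the MAP additive property on the second coordinate of $A$. The algebraic heart of the argument is the identity
\begin{equation*}
\Phi(ux)=\Phi(x)+(0,\log u),\qquad u>0,\; x\in E\setminus\{0\},
\end{equation*}
so that multiplicative scaling in $E$ corresponds to an additive shift in the $\R$-component of $S\times\R$.

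For the direction SMH $\Rightarrow$ MAP, I would set $A_t:=\Phi(Y_t)$ and verify the kernel characterization \eqref{eq:kMAP}. Writing $x=e^z\theta$, the definition of pushforward gives $P^A_t((\theta,z),B)=P^Y_t(x,\Phi^{-1}(B))$. Applying the SMH relation \eqref{eq:kMH} with $u=e^a$ and then using the identity above in the form $e^a\Phi^{-1}(B)=\Phi^{-1}(B+(0,a))$, one gets
\begin{equation*}
P^Y_t(x,\Phi^{-1}(B)) = P^Y_t(e^a x,e^a\Phi^{-1}(B)) = P^A_t((\theta,z+a),B+(0,a)),
\end{equation*}
which is exactly \eqref{eq:kMAP}. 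The converse MAP $\Rightarrow$ SMH runs the same computation backwards: from $A=(\rho,\xi)$ a MAP, define $Y_t:=e^{\xi_t}\rho_t=\Phi^{-1}(A_t)$, and the same chain of identities shows that $P^Y_t(x,C)=P^Y_t(ux,uC)$.

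Beyond the kernel identities, I would need to verify that $A$ (resp. $Y$) inherits the qualitative ``standard process'' properties from its counterpart. Since $\Phi^{-1}(\theta,z)=e^z\theta$ is manifestly continuous, and continuity of $x\mapsto\norm{x}$ on $E$ was part of the standing assumptions in Section \ref{sec:BanachLamperti}, the map $\Phi$ is a homeomorphism between $E\setminus\{0\}$ and $S\times\R$. Consequently the \cadlag property, quasi-left-continuity, right-continuity of the natural filtration, and the strong Markov property all transfer directly between $Y$ and $A$. For the lifetime correspondence, I would note that $Y_t\to0$ is equivalent to $\xi_t\to-\infty$ while $Y_t\to\pinfty$ (explosion of $Y$ in $\hat E$) is equivalent to $\xi_t\to+\infty$; both collapse to the explosion time $\abstime^\xi_\pinfty$ of $\xi$ on the one-point compactification of $\R$, giving $\abstime^A=\abstime^Y$.

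The main obstacle is really only bookkeeping around the cemetery states and the lifetime identification: once one fixes the convention that $0\in E$ and $\pinfty$ on the $Y$-side correspond to a single extra state on the $A$-side after $\xi$ explodes, the proof reduces to the elementary change of variables above. There is no analytic difficulty since $\Phi$ is a bijective homeomorphism on $E\setminus\{0\}$, and the equivalence of the scaling property \eqref{eq:kMH} with the translation property \eqref{eq:kMAP} is immediate once expressed at the level of transition kernels.
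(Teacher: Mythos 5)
Your proposal is correct and follows essentially the same route as the paper: both reduce the statement to the kernel identity \eqref{eq:kMAP} via the relation $\Phi(ux)=\Phi(x)+(0,\log u)$ and the chain $P^A_t((\theta,z),B)=P^Y_t(e^z\theta,\Phi^{-1}(B))=P^Y_t(e^{z+a}\theta,e^a\Phi^{-1}(B))=P^A_t((\theta,z+a),B+(0,a))$, with the transfer of the standard-process properties justified by the bijectivity and continuity of $\Phi$ away from $0$. Your treatment of the converse direction and the lifetime bookkeeping is slightly more explicit than the paper's, which simply notes the converse is analogous.
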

\begin{proof}
Given that $\Phi$ is
bijective and continuous except at the absorbing state $0$, it is clear that the transformed processes are standard whenever the starting process is. Thus we need only verify \eqref{eq:kMAP}
in the first case, and \eqref{eq:kMH} in the second. We only do this for the first. We have, assuming \eqref{eq:kMH} in the
second equality below,
and for all $t\geq 0, a\in\R, (\theta,z)\in S\times\R,B\in\Bo(S\times\R)$,
\begin{align*}
P^A_t((\theta,z), B) &= P^Y_t( e^{z}\theta, {\Phi^{-1}(B)}) =  P^Y_t(e^{z+a}\theta, e^{a}{\Phi^{-1}(B)})\\
&= P^A_t((\theta,z + a), B + (0,a)). 
\end{align*} 
\end{proof}
The transformation between SS and SMH processes is given in terms of random time changes. 
The
proof follows the heuristics in the proof of Theorem 2.3 in \cite{AliliChaumont2017} but adapted to our setting.

\begin{theorem}[Self-Similar Lamperti Time Change]\label{th:smhssbijection}
Let $\stp{X_t}$ be a  standard Markov process with trajectories
in $D([0,\infty),\hat E)$. Let 
 $\abstime_\pinfty^X$ 
be its explosion time, $\abstime^X_0$ be its absorption time to $0\in E$, and define 
its lifetime $\abstime^X= \abstime_\pinfty^X\wedge \abstime_0^X$.
Consider the additive functional 
$t\to\int_0^t\norm{X_u}^{-\alpha}du$ for $t\in [0,\abstime^X]$, and its generalized inverse
\begin{align*}
c_\alpha(t)&\coloneqq \inf\left\{s>0\colon \int_0^s\norm{X_u}^{-\alpha}du\geq t\right\}, & t\in\left[0,\int_0^{\abstime^X}\norm{X_u}^{-\alpha}du\right],
\\
c_\alpha(t)&\coloneqq \abstime^X, & t\in\left[\int_0^{\abstime^X}\norm{X_u}^{-\alpha}du, \infty\right).
\end{align*}
\begin{enumerate}
\item \label{thIt:sstomh} 
If $\stp{X_t}$ is $\alpha$-SS, then the process $\stp{Y_t}=\stp{X_{c_\alpha(t)}}$ is a standard SMH Markov process with lifetime $\abstime^Y=\abstime_\pinfty^Y\wedge \abstime_0^Y\aseq \int_0^{\abstime^X}\norm{X_u}^{-\alpha}du$.
\item \label{thIt:sstomhEq} 
We have
$$
\abstime^X \aseq S_Y \coloneqq \sup\left\{\int_0^t \norm{Y_u}^\alpha du \colon \int_0^t \norm{Y_u}^\alpha du <\infty \right\}.
$$
Also the additive functional $c_\alpha(t)$ satisfies
\begin{equation}\label{eq:c_alphaEquation}
	\stp{c_\alpha(t)} \aseq \stp{\int_0^t \norm{Y_s}^\alpha ds\wedge S_Y}.
\end{equation}

Furthermore, 
{setting $X_\pinfty=\pinfty$ if $\alpha>0$, and $X_\pinfty=0$ if $\alpha\leq 0$},
the process $\stp{Y_t}$ is the unique solution to
\begin{equation}\label{eq:SStoSMHtrajEq}
\stp{Y_t}
=\stp{X_{\int_0^t \norm{Y_s}^\alpha ds\wedge S_Y}}.
\end{equation}
\end{enumerate}

Conversely, let $\stp{Y_t}$ be a standard Markov process with trajectories
in $D([0,\infty),\hat E)$.
Let  
$\abstime^Y= \abstime_\pinfty^Y\wedge \abstime^Y_0$ be its lifetime.
Consider, the inverse additive functional
\begin{align*}
\gamma_\alpha(t)&\coloneqq \inf\lbr s>0\colon \int_0^s \norm{Y_u}^\alpha du\geq t\rbr, & t\in\left[0,\int_0^{\abstime^Y} \norm{Y_u}^\alpha du\right]\\
\gamma_\alpha(t)&\coloneqq  \abstime^Y, &  t\in\left[\int_0^{\abstime^Y} \norm{Y_u}^\alpha du,\infty\right).
\end{align*}
\begin{enumerate}[resume]
\item \label{thIt:mhtoss} If $\stp{Y_t}$ is SMH, then the process $\stp{X_t}=\stp{Y_{\gamma_\alpha(t)}}$ is a standard  $\alpha$-SS Markov process with lifetime $\abstime^X=\abstime_\pinfty^X\wedge \abstime_0^X\aseq \int_0^{\abstime^Y} \norm{Y_u}^\alpha du$.
\item \label{thIt:mhtossEq}
We have
$$
\abstime^Y \aseq S_X \coloneqq \sup\left\{\int_0^t \norm{X_u}^{-\alpha} du \colon \int_0^t \norm{X_u}^{-\alpha} du <\infty \right\}.
$$
Also the additive functional $\gamma_\alpha(t)$ satisfies
$$
\stp{\gamma_\alpha(t)} \aseq \stp{\int_0^t \norm{X_s}^{-\alpha} ds\wedge S_X}.
$$
Furthermore, 
{setting $Y_\pinfty=0$ if $\alpha>0$, and $Y_\pinfty=\pinfty$ if $\alpha\leq 0$},
the process $\stp{X_t}$ is the unique solution to
\begin{equation}\label{eq:SMHtoSStrajEq}
\stp{ X_t}
	=\stp{Y_{\int_0^t \norm{X_s}^{-\alpha} ds\wedge S_X}}.
\end{equation}
\end{enumerate}
\end{theorem}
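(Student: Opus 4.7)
The plan is to prove items \ref{thIt:sstomh}--\ref{thIt:mhtossEq} by organizing the argument into three stages: (a) establish the basic regularity of the time change and that it preserves the standard-process properties, (b) derive the trajectorial equations \eqref{eq:c_alphaEquation} and \eqref{eq:SStoSMHtrajEq} by inverting the additive functional, and (c) deduce the symmetry properties (SMH or $\alpha$-SS) of the time-changed process from those of the original one through a direct scaling computation. I will only describe the ``$\alpha$-SS $\Rightarrow$ SMH'' direction (items \ref{thIt:sstomh} and \ref{thIt:sstomhEq}); the converse is completely symmetric, with the roles of $c_\alpha$ and $\gamma_\alpha$ interchanged.

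\textbf{Step 1: regularity and lifetime.} The additive functional $t\mapsto \int_0^t\norm{X_u}^{-\alpha}du$ is continuous and non-decreasing on $[0,\abstime^X)$ and is strictly increasing on the set where $X_u\notin\{0,\pinfty\}$. Hence its right-continuous generalized inverse $c_\alpha$ is continuous and non-decreasing on $[0,\int_0^{\abstime^X}\norm{X_u}^{-\alpha}du)$, with $c_\alpha(t)<\abstime^X$ there. Bookkeeping at the absorbing states is where the sign of $\alpha$ enters the statement: when $\alpha>0$ the integrand blows up as $\norm{X}\to 0$, forcing $c_\alpha$ to saturate the moment $X$ hits $0$, and when $\alpha<0$ the symmetric behaviour occurs at $\pinfty$; this explains the declared convention $X_\pinfty\in\{0,\pinfty\}$. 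Because $c_\alpha$ is continuous and $(\mathcal F_{c_\alpha(t)})$-adapted, the standard machinery of random time changes (see, e.g., \cite{GetoorBlumenthal1968}) shows that $\stp{Y_t}=\stp{X_{c_\alpha(t)}}$ inherits the càdlàg, strong-Markov and quasi-left-continuous properties of $X$, and has lifetime $\abstime^Y\aseq \int_0^{\abstime^X}\norm{X_u}^{-\alpha}du$.

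\textbf{Step 2: the functional equations.} Since $c_\alpha$ is the right-continuous inverse of an absolutely continuous non-decreasing function with Radon--Nikodym derivative $\norm{X}^{-\alpha}$, the change-of-variables formula yields $dc_\alpha(t)=\norm{X_{c_\alpha(t)}}^{\alpha}\,dt=\norm{Y_t}^{\alpha}\,dt$, and integrating gives $c_\alpha(t)=\int_0^t\norm{Y_s}^{\alpha}\,ds$ on the interval where the right-hand side is strictly less than $S_Y$. The saturation of the time change at $S_Y$ is precisely the min in \eqref{eq:c_alphaEquation}, and $S_Y\aseq \abstime^X$ follows by inverting the identity $\abstime^Y\aseq\int_0^{\abstime^X}\norm{X_u}^{-\alpha}du$. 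Substituting $c_\alpha(t)$ into $Y_t=X_{c_\alpha(t)}$ yields \eqref{eq:SStoSMHtrajEq}. For uniqueness, any other solution $\tilde Y$ to \eqref{eq:SStoSMHtrajEq} gives rise to the continuous non-decreasing process $\tilde c(t)=\int_0^t\norm{\tilde Y_s}^{\alpha}\,ds\wedge S_Y$, and the relation $\tilde Y_t=X_{\tilde c(t)}$ forces $\tilde c$ to satisfy the defining identity of the inverse $c_\alpha$; pathwise, two continuous functions that invert the same strictly increasing continuous functional must coincide, so $\tilde c=c_\alpha$ and hence $\tilde Y=Y$.

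\textbf{Step 3: SMH via scaling.} By the $\alpha$-SS property, for $u>0$ the process $\stp{X_t}$ under $\P^X_{ux}$ has the same law as $\stp{uX_{u^{-\alpha}t}}$ under $\P^X_x$. A change of variables $w=u^{-\alpha}s$ in the defining integral for the time change shows that, under the same coupling,
\begin{equation*}
c_\alpha^{(ux)}(t)\;=\;\inf\!\left\{s>0\colon \int_0^{u^{-\alpha}s}\norm{X^{(x)}_w}^{-\alpha}dw\ge t\right\}\;=\;u^{\alpha}\,c_\alpha^{(x)}(t),
\end{equation*}
so that $Y^{(ux)}_t = X^{(ux)}_{c_\alpha^{(ux)}(t)} \stackrel{\mathrm{d}}{=} u\,X^{(x)}_{u^{-\alpha}\cdot u^{\alpha} c_\alpha^{(x)}(t)} = u\,Y^{(x)}_t$, which is \eqref{eq:kMH}. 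The hardest part of the argument is not this scaling, which is essentially a one-line computation, but rather the careful treatment of the absorbing boundary: one must verify that the time change is well-defined and continuous up to the saturation time, that the standard-process properties truly survive the inversion, and that the assignment of the limit state $X_\pinfty$ is consistent with the behaviour of $\norm{X}^{-\alpha}$ near the boundary. The converse direction (items \ref{thIt:mhtoss}--\ref{thIt:mhtossEq}) is obtained by repeating Steps 1--2 for the additive functional $t\mapsto\int_0^t\norm{Y_u}^{\alpha}du$ and its inverse $\gamma_\alpha$, while Step 3 is replaced by the analogous computation exploiting the SMH identity $\law(Y_t\mid\P^Y_{ux})=\law(uY_t\mid\P^Y_x)$, which yields $\gamma_\alpha^{(ux)}(t)=\gamma_\alpha^{(x)}(u^{-\alpha}t)$ and therefore the $\alpha$-SS property of $X$.
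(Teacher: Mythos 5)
Your proposal is correct and follows essentially the same route as the paper's proof: the standard-process properties via the Getoor--Blumenthal time-change machinery, the trajectorial equations and uniqueness by inverting the absolutely continuous additive functional $\psi(t)=\int_0^t\norm{X_u}^{-\alpha}du$, and the SMH (resp.\ $\alpha$-SS) property through the same change-of-variables identity $c_\alpha^{(ux)}(t)=u^{\alpha}c_\alpha^{(x)}(t)$ (resp.\ $\gamma_\alpha^{(ux)}(t)=\gamma_\alpha^{(x)}(u^{-\alpha}t)$) combined with the scaling hypothesis on the path law.
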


\begin{proof}
{To ease notations we will omit the index $\alpha$ in $c_\alpha(t)$ and $\gamma_\alpha(t)$.}

By  Propositions IV.1.6 and IV.1.13 in \cite{GetoorBlumenthal1968} (and the discussion around eq. IV.1.8 therein), the mappings $t\to\int_0^t \norm{X_u}^{-\alpha}du$
and $t\to \int_0^t \norm{Y_u}^\alpha du$ 
both define continuous strong additive functionals  of
$(\stp{X_t},\abstime^X)$
and $(\stp{Y_t}, \abstime^Y)$ respectively (see Definitions IV.1.1 and IV.1.11 in \cite{GetoorBlumenthal1968}). Then,
by Exercise V.2.11 iv) in \cite{GetoorBlumenthal1968}, the time-changed processes $\stp{X_{c(t)}}$ and $\stp{Y_{\gamma(t)}}$
are strong Markov processes with lifetimes $\int_0^{\abstime^X}\norm{X_u}^{-\alpha}du$ and $\int_0^{\abstime^Y}\norm{Y_u}^\alpha du$ in each case. Since  $c(t)$ (resp. $\gamma(t)$)
is continuous on $\left[0,\int_0^{\abstime^X}\norm{X_u}^{-\alpha}du\right)$ (resp. $\left[0,\int_0^{\abstime^Y}\norm{Y_u}^\alpha du\right)$), 
it follows that $\stp{X_{c(t)}}$ (resp. $\stp{Y_{\gamma(t)}}$) is quasi-left continuous. Also, by Lemma \ref{le:TimeChangeDMaeasurable} below,
the mapping $x\to \E_x^X[ f(X_{c(t)})]$ is measurable for every $t\geq 0$ and $f\in \bbf(\hat E)$, the space of bounded Borel functions on $\hat E$, so that the process defined by $Y_t=X_{c(t)}$ in $\ref{thIt:sstomh}$ 
is indeed a standard process. By an analogous argument, the process defined by $X_t=Y_{\gamma(t)}$ in $\ref{thIt:mhtoss}$ is also a standard process.

We now show that the process $\stp{Y_t}$ in $\ref{thIt:sstomh}$ is SMH.
Let $\hat c(t)$ be the functional $c(t)$ applied to the process $\stp{\hat X_t} \coloneqq \stp{aX_{a^{-\alpha}t}}$. Observe that the change of variable $v=a^{-\alpha}u$ yields
\begin{align*}
a^{-\alpha}\hat c(t)&=a^{-\alpha}\inf \lbr s\geq 0\colon \int_0^s \norm{aX_{a^{-\alpha}u}}^{-\alpha} du\geq t \rbr\\
&=\inf \lbr a^{-\alpha}s\geq 0\colon \int_0^{a^{-\alpha} s} \norm{X_{v}}^{-\alpha} dv\geq t \rbr \\
&=\inf \lbr s\geq 0\colon \int_0^{ s} \norm{X_{v}}^{-\alpha} dv\geq t \rbr\\
&=c(t).
\end{align*}
Thus $\hat X_{\hat c(t)}=aX_{a^{-\alpha}\hat c(t)}=aX_{c(t)}$. 
This, together wit the $\alpha$-SS property of $\stp{X}$ give, for $B\in  \Bo(\hat E), x\in \hat E$, and $a>0$,
\begin{align*}\label{eq:sstomh1}
     \P_x^Y( Y_{t} \in B)&= \P_x^X(X_{c(t)} \in B) 
    = \P_{a^{-1}x}^X(\hat X_{\hat c(t)} \in B) 
    = \P_{a^{-1}x}^X(a X_{c(t)} \in B)\\
    &= \P_{a^{-1}x}^Y( Y_{t} \in a^{-1}B),
\end{align*}
so that $\stp{Y_t}$ is SMH.\par
We proceed similarly to prove that the process $\stp{X}$ in $\ref{thIt:mhtoss}$ is self-similar.
For $a>0$ let $\hat \gamma(t)$ be the functional $\gamma(t)$ applied to the process $\stp{\hat Y_t}\coloneqq \stp{aY_t}$.
Observe that
\begin{align*}
    \hat \gamma(t) &=
    \inf \lbr s\geq 0 \colon \int_0^s \norm{aY_u}^\alpha du\geq t  \rbr\\
    &=\inf \lbr s\geq 0 \colon \int_0^s \norm{Y_u}^\alpha du\geq a^{-\alpha}t  \rbr\\
    &=\gamma(a^{-\alpha}t).
\end{align*}
The above yields $\hat Y_{\hat \gamma(t)}=aY_{\gamma(a^{-\alpha}t)}$.
This, together with the SMH property of $\stp{Y}$ in the second equality below give
\begin{align*}
\P_x^X(X_t \in B) &= \P_{x}^Y(Y_{\gamma(t)}\in B)
=\P_{a^{-1}x}^Y(\hat Y_{\hat \gamma(t)}\in B)
=\P_{a^{-1}x}^Y(aY_{\gamma(a^{-\alpha}t)}\in B)\\
&=\P_{a^{-1}x}^X(aX_{a^{-\alpha}t} \in B),
\end{align*}
so that $\stp{X_t}$ is $\alpha$-SS.\par
We now show $\ref{thIt:sstomhEq}$.
Write $\psi(t)\coloneqq \int_0^t \norm{X_s}^{-\alpha}ds$.
Note that the function $\psi$ is a.s. strictly increasing and absolutely continuous on compact
time intervals contained in $[0,\abstime^X]$, with inverse function $c(t)$, and a.e. derivative $\norm{X_t}^{-\alpha}$. 
It follows that for any 
$\psi(ds)$-integrable function $\beta_s$ on $[0,\abstime^X]$, we have
\begin{equation*}
\int_0^t \beta_s \norm{X_s}^{-\alpha}ds=\int_0^{\psi(t)}\beta_{c(s)}ds.
\end{equation*}
If $t<\psi(\abstime^X)$, in particular if $\psi(\abstime^X)=\infty$,  we have $0\underset{a.e.}{<}\norm{X_s}^{\alpha}\underset{a.e.}{<}\infty$ on $s\in[0,c(t)]$  
and $c(t)<\abstime^X\leq \infty$. Then
\begin{align*}
c(t)
&= \int_0^{c(t)} \norm{X_s}^{\alpha} \norm{X_s}^{-\alpha} ds
= \int_0^{t} \norm{X_{c(s)}}^{\alpha}ds.
\end{align*}
Furthermore
\begin{align*}
S_Y=\lim_{u\uparrow \psi(\abstime^X)} \int_0^{u} \norm{X_{c(s)}}^{\alpha}ds
=\lim_{u\uparrow \psi(\abstime^X)} c(u)
\equiv\abstime^X.
\end{align*}
Thus
$$
c(t)\equiv c(t)\wedge \abstime^X=\int_0^t \norm{Y_{s}}^{\alpha}ds\wedge S_Y.
$$
In particular, recalling that we have set $X_\pinfty=\pinfty$ if $\alpha>0$ and $X_\pinfty=0$ if $\alpha\leq 0$,
we have
$$Y_{t}= X_{c(t)\wedge \abstime^X}=X_{\int_0^{t} \norm{X_{c(s)}}^{\alpha}ds \wedge\abstime^X}=X_{\int_0^t \norm{Y_{s}}^{\alpha}ds\wedge \abstime^X}.$$
To see that this is the unique solution note that if $\stp{Y'_t}$ satisfies \eqref{eq:SStoSMHtrajEq}, then the function
$$
\tilde c(t) = \int_0^t \norm{Y'_s}^{\alpha}ds = \int_0^t \norm{X_{\tilde c(s)}}^{\alpha} ds
$$
is continuous, strictly increasing, and has derivative 
$\norm{X_{\tilde c(s)}}^\alpha$ a.e. on $[0,t]$ whenever
$\tilde c(t)< \abstime^X$. Then, in this
case we have
$$
t=\int_0^t \norm{X_{\tilde c(s)}}^{\alpha}\norm{X_{\tilde c(s)}}^{-\alpha}ds=\int_0^{\tilde c(t)} \norm{X_s}^{-\alpha}ds
$$
which implies 
$\tilde c(t)=c(t)$ whenever $\tilde c(t)<\abstime^X$ and
$\tilde c(\psi(\abstime^X))=\lim_{t\uparrow \psi(\abstime^X)}c(t) = \abstime^X$. 
Since $Y'_t = X_{\tilde c(t) \wedge \abstime^X}$, we conclude
$Y'_t= X_{c(t)\wedge \abstime^X}  =  Y_t$ for all $t\geq0$.\par

Finally, we prove $\ref{thIt:mhtossEq}$ using analogous arguments.
Write $\varphi(t)=\int_0^t\norm{Y_s}^\alpha ds$.
Note that the function $\phi$ is a.s. strictly increasing and absolutely continuous on compact
time intervals contained in $[0,\abstime^Y]$, with inverse function $\gamma(t)$, and a.e. derivative $\norm{Y_t}^{\alpha}$. 
It follows that for any 
$\phi(ds)$-integrable function $\beta_s$ on $[0,\abstime^Y]$, we have
\begin{equation*}
	\int_0^t \beta_s \norm{Y_s}^{\alpha}ds=\int_0^{\phi(t)}\beta_{\gamma(s)}ds.
\end{equation*}
If $t<\phi(\abstime^Y)$, in particular if $\phi(\abstime^Y)=\infty$,  we have $0\underset{a.e.}{<}\norm{Y_s}^{\alpha}\underset{a.e.}{<}\infty$ on $s\in[0,\gamma(t)]$  
and $\gamma(t)<\abstime^Y\leq \infty$. Then
\begin{align*}
	\gamma(t)
	&= \int_0^{\gamma(t)}  \norm{Y_s}^{-\alpha} \norm{Y_s}^{\alpha}ds
	= \int_0^{t} \norm{Y_{\gamma(s)}}^{-\alpha}ds.
\end{align*}
Furthermore
\begin{align*}
	S_X=\lim_{u\uparrow \phi(\abstime^Y)} \int_0^{u} \norm{Y_{\gamma(s)}}^{-\alpha}ds
	=\lim_{u\uparrow \phi(\abstime^Y)} \gamma(u)
	\equiv\abstime^Y.
\end{align*}
Thus
$$
\gamma(t)\equiv \gamma(t)\wedge \abstime^Y=\int_0^t \norm{X_{s}}^{-\alpha}ds\wedge S_X.
$$
In particular, recalling that we have set $Y_\pinfty=0$ if $\alpha>0$ and $Y_\pinfty=\pinfty$ if $\alpha\leq 0$,
we have
$$X_{t}= Y_{\gamma(t)\wedge \abstime^Y}=Y_{\int_0^{t} \norm{X_{\gamma(s)}}^{-\alpha}ds \wedge S^X}.$$
To see that this is the unique solution note that if $X'_t$ satisfies \eqref{eq:SMHtoSStrajEq}, then the function
$$
\tilde \gamma(t) = \int_0^t \norm{X'_s}^{-\alpha}ds = \int_0^t \norm{Y_{\tilde \gamma(s)}}^{-\alpha} ds
$$
is continuous, strictly increasing, and has derivative 
$\norm{Y_{\tilde \gamma(s)}}^{-\alpha}$ a.e. on $[0,t]$ whenever
$\tilde \gamma(t)< \abstime^Y$. Then, in this
case we have
$$
t=\int_0^t \norm{Y_{\tilde \gamma(s)}}^{\alpha}\norm{Y_{\tilde \gamma(s)}}^{-\alpha}ds=\int_0^{\tilde \gamma(t)} \norm{Y_s}^{\alpha}ds
$$
which implies 
$\tilde \gamma(t)=\gamma(t)$ whenever $\tilde \gamma(t)<\abstime^X$ and
$\tilde \gamma(\phi(\abstime^Y))=\lim_{t\uparrow \phi(\abstime^Y)}\gamma(t) = \abstime^Y$. 
Then since $X'_t = Y_{\tilde \gamma(t) \wedge \abstime^Y}$, we conclude
$X'_t= Y_{\gamma(t)\wedge \abstime^Y}  =  X_t$ for all $t\geq0$.
\end{proof}

 By \eqref{eq:c_alphaEquation} the
time change $\gamma_\alpha$ is the right inverse of $c_\alpha$; i.e. $c_\alpha\circ \gamma_\alpha (t)\equiv t.$
\begin{corollary}[Lamperti time change bijection]
Let $\stp{Y_t}=\stp{X_{c_\alpha(t)}}$ be as in Theorem \ref{th:smhssbijection} \ref{thIt:sstomh}, and consider the time change $\gamma_\alpha$ of $\stp{Y_t}$. Then $\stp{X_t}\aseq \stp{Y_{\gamma_\alpha(t)}}.$
		\label{Th:smhtossTimeChangeBijection} 
		Conversely, let  $\stp{X_t}=\stp{Y_{\gamma_\alpha(t)}}$ be a is Theorem \ref{th:smhssbijection} \ref{thIt:mhtoss}, and consider the time change $c_\alpha$ of $\stp{X_t}$. Then $\stp{Y_t}\aseq \stp{X_{c_\alpha(t)}}.$
\end{corollary}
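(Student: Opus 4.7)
The plan is to deduce the corollary as a direct consequence of parts \ref{thIt:sstomhEq} and \ref{thIt:mhtossEq} of Theorem \ref{th:smhssbijection}, which provide explicit formulas for $c_\alpha$ and $\gamma_\alpha$ as integral additive functionals of the transformed process. The content of the corollary amounts to the statement that the two time-change maps are mutual inverses on their natural domains, a fact already foreshadowed by the remark preceding the corollary, which extracts the identity $c_\alpha\circ\gamma_\alpha(t)\equiv t$ from \eqref{eq:c_alphaEquation}.

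For the forward direction, starting from $\stp{X_t}$ $\alpha$-SS and $Y_t=X_{c_\alpha(t)}$, I would invoke part \ref{thIt:sstomhEq} to write
$$
c_\alpha(t) \aseq \int_0^t \norm{Y_s}^\alpha\, ds \wedge S_Y, \qquad S_Y \aseq \abstime^X,
$$
and observe that, by its very definition, the time change $\gamma_\alpha$ of $\stp{Y_t}$ is the generalized inverse of $\phi(s)\coloneqq \int_0^s \norm{Y_u}^\alpha\, du$. The map $\phi$ is continuous and strictly increasing on $[0,\abstime^Y]$ (because $\norm{Y_u}^\alpha$ is a.s.\ strictly positive on $[0,\abstime^Y)$) with $\phi(\abstime^Y)=S_Y$, so $\phi$ and $\gamma_\alpha$ are genuine mutual inverses between $[0,\abstime^Y]$ and $[0,S_Y]$. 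Combining this with the displayed formula yields $c_\alpha(\gamma_\alpha(t))=t$ for $t\in[0,S_Y]$, and hence
$$
Y_{\gamma_\alpha(t)} = X_{c_\alpha(\gamma_\alpha(t))} = X_t.
$$

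The converse proceeds symmetrically: starting from $\stp{Y_t}$ SMH and $X_t=Y_{\gamma_\alpha(t)}$, part \ref{thIt:mhtossEq} supplies $\gamma_\alpha(t)\aseq\int_0^t\norm{X_s}^{-\alpha}\,ds\wedge S_X$ with $S_X\aseq\abstime^Y$, and the very same inversion argument gives $\gamma_\alpha\circ c_\alpha(t)=t$, hence $X_{c_\alpha(t)}=Y_{\gamma_\alpha(c_\alpha(t))}=Y_t$.

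The main (and rather mild) obstacle is the endpoint bookkeeping: one has to verify that the extensions of $c_\alpha$ and $\gamma_\alpha$ beyond their natural ranges (both set equal to $\abstime^X$ or $\abstime^Y$ as appropriate) together with the cemetery conventions for $X_\pinfty$ and $Y_\pinfty$ laid out in Theorem \ref{th:smhssbijection} ensure that the identities persist for all $t\geq 0$ after absorption. Once this is checked, the argument reduces to a one-line substitution using the explicit formulas of parts \ref{thIt:sstomhEq} and \ref{thIt:mhtossEq}.
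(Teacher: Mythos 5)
Your proposal is correct and follows essentially the same route the paper intends: the corollary is stated without a separate proof precisely because it is the one-line substitution $Y_{\gamma_\alpha(t)}=X_{c_\alpha(\gamma_\alpha(t))}=X_t$ (and symmetrically for the converse), justified by the identity $c_\alpha\circ\gamma_\alpha(t)\equiv t$ extracted from \eqref{eq:c_alphaEquation} and its counterpart in part \ref{thIt:mhtossEq}. Your additional care about the endpoint conventions after $S_Y$ (resp.\ $S_X$) and the cemetery states is a sensible, if brief, supplement to what the paper leaves implicit.
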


    The composition of the above two transformations
    between MAP and SMH processes, and between SMH and $\alpha$-SS processes respectively, leads to
    Theorem \ref{th:banachLamperti} which for $\R^d$-valued processes is the Lamperti transformation of \cite{AliliChaumont2017}; and
    for $\R^+$-valued processes is the original transformation of \cite{Lamperti1972}. We also refer the reader to the results in \cite{ChaumontPantiRivero2013} and  \cite{CasanovaMiroSchertzerJegousse2024}.

The following proposition gives a characterization of {$\alpha$-SS and} SMH processes in terms of their generators. For $b\geq 0$, let $\SSS_b$ be the operator that scales space by a factor of $b$, i.e. that takes $f\in\bbf(E)$ to $\SSS_b f(x)=f(bx)$.  Recall that we denote by $(\A,D)$ an operator defined
on a set $D$, and also write $(\A_1,D_1)\subset(\A_2,D_2)$ to mean that $D_1\subset D_2$ and $\A_2=\A_1$ on $D_1$. Also, $\D(A)$  refers to the full domain of a closed operator $\A$. 

\begin{proposition}\label{prop:smhssGenerators}
    Consider an operator $(\A,D_\A)$ satisfying $\SSS_b D_\A \subset D_\A$ for all $b\geq0$.
    Assume also that the solutions to the martingale problem for $(\A,D_\A)$ are unique. 
    \par
    Let $\stp{X_t}$ be a Markov process with generator $(\A,\D(\A))$ satisfying $(\A,D_\A)\subset (\A,\D(\A))$.
    Then the following are equivalent:
    \begin{enumerate}
        \item \label{itTh:Xss} The process $X$ is $\alpha$-SS.
        \item \label{itTh:ssGenerator} For all $b\geq0$ and $f\in D_\A$, $\A f  = b^{-\alpha}\SSS_{b^{-1}}\A\SSS_b f$.
    \end{enumerate}
    \par
    Similarly, let $\stp{Y_t}$ be a Markov process with generator $(\A,\D(\A))$ satisfying $(\A,D_\A)\subset (\A,\D(\A)).$
    Then the following are equivalent:
    \begin{enumerate}[resume]
        \item \label{itTh:Ysmh} The process $Y$ is SMH.
        \item \label{itTh:smhGenerator}For all $b\geq 0$ and $f\in D_\A$,  $\A  = \SSS_{b^{-1}}\A\SSS_b$, 
    \end{enumerate} 
\end{proposition}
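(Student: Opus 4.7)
The strategy is to translate the self-similarity identity into a semigroup identity, differentiate in time to obtain the generator identity for the forward direction, and invert the procedure using uniqueness of the martingale problem for the converse. Since the SMH property is exactly the $\alpha=0$ specialization of self-similarity, the second equivalence follows verbatim from the first, so I will only outline the treatment of \ref{itTh:Xss} $\iff$ \ref{itTh:ssGenerator}.

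For \ref{itTh:Xss} $\Rightarrow$ \ref{itTh:ssGenerator}, I rewrite \eqref{eq:ssDef} at the level of the transition semigroup: for every $f \in \bbf(\hat E)$, $b>0$, $x\in E$,
\begin{equation*}
P^X_t f(x) \;=\; \E^X_{b^{-1}x}\!\bigl[f(b X_{b^{-\alpha}t})\bigr] \;=\; \bigl(\SSS_{b^{-1}} P^X_{b^{-\alpha}t} \SSS_b f\bigr)(x).
\end{equation*}
Restricting to $f\in D_\A$ and using $\SSS_b f \in D_\A \subset \D(\A)$, both sides are differentiable in $t$ at $t=0$ by the standard strong-continuity property of the generator on its domain. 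Differentiating yields $\A f(x) = b^{-\alpha} \SSS_{b^{-1}} \A \SSS_b f(x)$ via the chain rule on the time variable.

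For \ref{itTh:ssGenerator} $\Rightarrow$ \ref{itTh:Xss}, I invoke uniqueness of the martingale problem. Fix $x\in E$, $b>0$, $f\in D_\A$ and set $g=\SSS_b f\in D_\A$. Since $X$ started at $b^{-1}x$ has generator $\A$,
\begin{equation*}
M^g_t \;=\; g(X_t) - \int_0^t \A g(X_s)\,ds
\end{equation*}
is a $\P^X_{b^{-1}x}$-martingale. Substituting the generator identity in the equivalent form $\A \SSS_b f = b^\alpha \SSS_b \A f$ and performing the deterministic change of variables $v = b^\alpha s$, one sees that for $\tilde X_u := b X_{b^{-\alpha}u}$,
\begin{equation*}
M^g_{b^{-\alpha}u} \;=\; f(\tilde X_u) - \int_0^u \A f(\tilde X_v)\,dv.
\end{equation*}
Since $t \mapsto b^{-\alpha}t$ is deterministic, the left-hand side is a martingale with respect to the time-changed filtration $\tilde{\mathcal F}_u = \mathcal F_{b^{-\alpha}u}$. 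Hence under $\P^X_{b^{-1}x}$, the process $\tilde X$ solves the martingale problem for $(\A, D_\A)$ starting from $\tilde X_0 = x$. By the uniqueness hypothesis, $\law(\tilde X \mid \P^X_{b^{-1}x}) = \law(X \mid \P^X_x)$, which is exactly \eqref{eq:ssDef}.

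The main subtlety is purely bookkeeping: the time change in the converse direction is deterministic, so the martingale property is preserved without any stochastic calculus, and the invariance $\SSS_b D_\A \subset D_\A$ is used crucially to ensure that $g = \SSS_b f$ stays inside the domain on which the generator identity can be applied. In the forward direction, the only non-trivial fact is the right-differentiability of $t\mapsto P^X_t f(x)$ at $t=0$ for $f\in D_\A$, which is standard since $D_\A \subset \D(\A)$. Setting $\alpha=0$ throughout gives the SMH equivalence \ref{itTh:Ysmh} $\iff$ \ref{itTh:smhGenerator} without any change.
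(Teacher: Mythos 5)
Your proposal is correct and follows essentially the same route as the paper: the forward implication is obtained by differentiating the rescaled semigroup identity at $t=0$, and the converse by showing that $\tilde X_u = bX_{b^{-\alpha}u}$ under $\P^X_{b^{-1}x}$ solves the martingale problem for $(\A,D_\A)$ started at $x$ and invoking uniqueness. The only cosmetic difference is that you verify the martingale problem by explicitly time-changing the martingale $M^g$, whereas the paper computes the generator of the rescaled process directly; these are the same argument.
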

\begin{proof}
We only prove the equivalence between $\ref{itTh:Xss}$ and $\ref{itTh:ssGenerator}$ since the
SMH case corresponds to $\alpha=0$.
Assuming $\ref{itTh:Xss}$ we obtain $\E^X_{x}[f(X_t)]=\E^X_{b^{-1}x}\left[\SSS_bf(X_{b^{-\alpha}t})\right]$ for every bounded function $f$. Taking time derivatives,
        \begin{align*}
\forall f\in D_\A,\quad \A f(x)&= \frac{d}{dt}\big\vert_{t=0} \E^X_{x}[f(X_t)]= \frac{d}{dt}\big\vert_{t=0}\E^X_{b^{-1}x}[\SSS_b f(X_{b^{-\alpha} t})]\\&=b^{-\alpha}\A \SSS_b f(b^{-1}x) = b^{-\alpha}\SSS_{b^{-1}}\A \SSS_b f(x),
    \end{align*}
and $\ref{itTh:ssGenerator}$ follows.
  
    For the converse note that we can compute the generator of the Markov process  $\{bX_{b^{-\alpha} t}; \P^X_{b^{-1}x}\}$ on $D_\A$ 
    in terms of $\SSS_b\A$; we obtain
    $$
    \frac{d}{dt}\big\vert_{t=0}\E^X_{b^{-1}x}[f(bX_{b^{-\alpha} t})]=b^{-\alpha}\lim_{t\downarrow 0}
    \frac{\E^X_{b^{-1}x}[\SSS_b f(X_{b^{-\alpha} t})]- \SSS_b f(b^{-1}x)}{b^{-\alpha}t}
    =b^{-\alpha}\A \SSS_b f(b^{-1}x).
    $$
    Then by $\ref{itTh:ssGenerator}$ the process $\{bX_{b^{-\alpha} t}; \P^X_{b^{-1}x}\}$
    solves the martingale problem for $(\A,D_\A)$ and $\ref{itTh:Xss}$ follows by uniqueness of solutions.  
    \end{proof}

We end this section with the following technical lemma that was used in the proof of Theorem \ref{th:smhssbijection}.
\begin{lemma}\label{le:TimeChangeDMaeasurable}
\begin{enumerate}We have the following measurability of mappings.
    \item \label{itLe:TimeChangeFunctDMeasurable} The mappings $\stp{x_t} \to \stp{c(t)}$ and $\stp{y_t} \to \stp{\gamma(t)}$ are measurable from $D([0,\infty),\hat E)$ to $D^0([0,\infty), [0,\infty])$,  the non-decreasing elements of $D([0,\infty),[0,\infty])$, endowed with the relative $\sigma$-algebra inherited from the Skorohod $\sigma$-algebra in $D([0,\infty),[0,\infty])$.
    \item \label{itLe:TimeChangeProcessDMeasurable} The mappings $\stp{x_t} \to  \stp{x_{c(t)}}$ and $\stp{z_t} \to \stp{z_{\gamma(t)}}$ are measurable from $D([0,\infty),\hat E)$ to $D([0,\infty),\hat E)$.
\end{enumerate}
\end{lemma}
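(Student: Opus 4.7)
My plan is to exploit the reduction noted preceding the lemma: since $\hat E$ and $[0,\infty]$ are compact metrizable, the Borel $\sigma$-algebra on $D([0,\infty),\hat E)$ and on $D([0,\infty),[0,\infty])$ under the Skorohod topology agrees with the projection $\sigma$-algebra $\sigma(\pi_t:t\geq 0)$ (Theorem 12.5 in \cite{Billingsley99}). Both claims then reduce to showing that $x\mapsto c(t)$ and $x\mapsto x_{c(t)}$ are measurable for each fixed $t\geq 0$, with identical arguments for $\gamma$.

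The first step will be to verify joint measurability of the evaluation map $\mathrm{ev}\colon [0,\infty)\times D([0,\infty),\hat E)\to\hat E$, $(u,x)\mapsto x_u$. I would set $q_n(u):=(\lfloor 2^n u\rfloor+1)/2^n\in(u,u+2^{-n}]$, observe that $(u,x)\mapsto x_{q_n(u)}$ is measurable (piecewise constant in $u$ on dyadic intervals and a fixed-time projection in $x$), and invoke right-continuity of \cadlag paths to conclude that $x_{q_n(u)}\to x_u$ pointwise, hence $\mathrm{ev}$ is measurable as a pointwise limit. Composing with the Borel function $y\mapsto \|y\|^{-\alpha}$ on $\hat E$ and applying Fubini's theorem then yields that the additive functional $\Psi_s(x):=\int_0^s \|x_u\|^{-\alpha}du$ is measurable in $x$ for every $s\geq 0$. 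The lifetime $\abstime^X$ is measurable as the hitting time of the closed absorbing set $\{0,\pinfty\}$, and $T(x):=\Psi_{\abstime^X}(x)$ follows by monotone convergence.

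For part \ref{itLe:TimeChangeFunctDMeasurable} I will use the identity
\[
\{c(t)\leq r\}=\bigl(\{t\leq T\}\cap\{\Psi_r\geq t\}\bigr)\cup\bigl(\{t>T\}\cap\{\abstime^X\leq r\}\bigr),
\]
valid by continuity and monotonicity of $s\mapsto \Psi_s$ combined with the piecewise definition of $c$, to conclude that $c(t)$ is measurable; substituting $\|x_u\|^\alpha$ for $\|x_u\|^{-\alpha}$ yields the analogous statement for $\gamma(t)$. For part \ref{itLe:TimeChangeProcessDMeasurable}, once $x\mapsto c(t)$ is measurable and $\mathrm{ev}$ is jointly measurable, the composition $x\mapsto (x,c(t))\mapsto x_{c(t)}$ is measurable for each $t$. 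Since $c$ is non-decreasing and right-continuous, composition with a \cadlag path is again \cadlag, so $t\mapsto x_{c(t)}$ lies in $D([0,\infty),\hat E)$; the projection $\sigma$-algebra reduction then closes the argument.

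The only non-routine ingredient will be the joint measurability of the evaluation map, for which right-continuity of \cadlag paths is essential; everything else amounts to composition of measurable maps, Fubini's theorem, and the projection $\sigma$-algebra reduction.
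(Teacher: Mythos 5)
Your proposal is correct and follows the same skeleton as the paper's proof: both arguments reduce everything to fixed-time measurability via the identification of the Skorohod Borel $\sigma$-algebra with the projection $\sigma$-algebra (Theorem 12.5 in \cite{Billingsley99}), and then obtain part \ref{itLe:TimeChangeProcessDMeasurable} by composing with part \ref{itLe:TimeChangeFunctDMeasurable}. The difference is one of self-containedness: where the paper simply asserts that $\stp{x_t}\to c(t_0)$ is measurable for each fixed $t_0$ and invokes Appendix M16 of \cite{Billingsley99} for the measurability of the composition $\stp{x_t}\to\stp{x_{c(t)}}$, you supply the ingredients explicitly --- joint measurability of the evaluation map via dyadic right-approximation and right-continuity, Tonelli for the additive functional $\Psi_s$, the absorbing-set characterization of the lifetime, and the sublevel-set identity for $c(t)$. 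This buys a proof that does not lean on the Skorohod-specific composition lemma, at the cost of a little more work. One small refinement: your identity $\{c(t)\le r\}\cap\{t\le T\}=\{\Psi_r\ge t\}\cap\{t\le T\}$ invokes continuity of $s\mapsto\Psi_s$, which holds for a.e.\ trajectory of the processes in question but need not hold for an arbitrary element of $D([0,\infty),\hat E)$ (the $[0,\infty]$-valued map $\Psi$ is automatically monotone and left-continuous, but can fail right-continuity at a point where local integrability of $\|x_u\|^{-\alpha}$ breaks down). Since the lemma is a pathwise statement about maps on the whole trajectory space, it is safer to write $\{c(t)\le r\}\cap\{t\le T\}=\bigcap_{n}\{\Psi_{r+1/n}\ge t\}\cap\{t\le T\}$, which follows from monotonicity alone and is equally measurable; this is a one-line change and does not affect the rest of your argument.
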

\begin{proof}
Recall that, $E$ being locally compact and second countable, the Borel $\sigma$-algebra on $D([0,\infty),\hat E)$ generated by the Skorohod topology, and the trace $\sigma$-algebra generated by the 
finite-dimensional projections on $D([0,\infty),\hat E)$, coincide. Then, $\ref{itLe:TimeChangeFunctDMeasurable}$
follows from the fact that, for each $t_0$, the map $\stp{x_t} \to  c(t_0)$ is 
measurable from $\Bo(D([0,\infty),\hat E))$ to $\Bo(\hat [0,\infty])$, where we recall that $\Bo(\cdot)$ stands for the Borel $\sigma$-algebra in any topological space. Since $\Bo(D([0,\infty),[0,\infty]))$ is generated by the finite dimensional
projections (see section 12 in \cite{Billingsley99}), the latter implies that 
the map $\stp{x_t} \to \stp{c(t)}$ is measurable from $D([0,\infty),\hat E)$ to $D([0,\infty),[0,\infty])$.\par
On the other hand, $\ref{itLe:TimeChangeProcessDMeasurable}$ follows from $\ref{itLe:TimeChangeFunctDMeasurable}$ together with Appendix M16 in \cite{Billingsley99}.
\end{proof}

\section{Preliminary objects of study}\label{sec:preliminaries}
 
\subsection{$\Lambda$-coalescents}\label{sec:coalescents}
We expose the construction of coalescents with multiple merger from the seminal works of \cite{Pitman99,Sagitov99}.
For a positive integer  $p$, let $[p]=\{1,\cdots,p\}$ and $\PS{[p]}$ be the space of partitions of $[p]$ endowed with the discrete topology.
We call the elements of any partition $\pi\in\PS{[p]}$ the blocks of $\pi$ and denote its number by $\#\pi$. 
Let $\Lambda$ be a finite measure on $[0,1]$ which can be decomposed as
$$
\Lambda= \Lambda(\{0\})\delta_0 + \Ind{{(0,1)}}\Lambda.
$$
The $(p,\Lambda)$-coalescent process $\stp{\Pi_t}$ is a Markov jump process with values in $\PS{[p]}$ that evolves through
\say{coagulations} or mergers. The latter consists of constructing a new coarser partition of $[p]$ from 
an initial $\pi\in \PS{[p]}$
by taking the union of a collection of blocks  that are present in $\pi$. 
The coagulations of $\stp{\Pi_t}$ are directed by the measure $\Lambda$ via the
following rules; at time $t\geq0$:
\begin{description}
    \item [Pairwise coagulations:] Any pair of blocks of $\Pi_t$ coagulate at rate $\Lambda(\{0\})$.
    \item [Coin-flip coagulations:] Any collection of $2\leq i\leq j =\card{\Pi_t}$ blocks of $\Pi_t$, coagulate
    into a single block at rate $\beta_{j,i}^{(\Lambda)}\coloneqq \int_{{(0,1)}} \zeta^{i-2}(1-\zeta)^{j-i}\Lambda(d\zeta)$. 
\end{description}
The first dynamics correspond to those of  Kingman's coalescent \cite{Kingman1982}.
The second dynamics  have the following well-known interpretation: at rate $\zeta^{-2}{\Lambda(d\zeta)}$
a value $\zeta\in {(0,1)}$ is drawn; then, each block of $\Pi_t$ decides to participate in the coagulation event
with probability $\zeta$.
This representation for the rates implies that those processes are consistent according to $p$ and can thus be extended to $p=\infty$. In this case, we will talk about $\Lambda$-coalescents, taking values in the space $\PS{\infty}$ of partitions of $\N$, which is a compact metric space under a metric $\dist_{\PS{\infty}}$ (see Lemma 2.6 in \cite{Bertoin2006} for the definition of $\dist_{\PS{\infty}}$). We refer the reader to \cite{Bertoin2006} for a thorough exposition of general coalescent processes.
In the present work we leave out the case when $\Lambda$ has an atom at $1$, which corresponds to adding a rate $\Lambda(\{1\})$ at which all the blocks decide to coagulate.

 A famous and important example of $\Lambda$-coalescent processes 
is the family of Beta coalescents \cite{Schweinsberg2003,GnedinIksanovMarynych2014}
in which $\Lambda(d\zeta)=c{\zeta^{1-\beta}(1-\zeta)^{\beta-1}}d\zeta$ for $\beta\in0,2)$.
\subsection{The $\Lambda$-Fleming-Viot processes}  \label{Sec:LFV}
We begin with a few remarks on the space $\sM(\typeS)$ endowed with the topology of weak convergence. 
By Theorem 1.14 in \cite{ZenghuLi2022} the space $\sM(\typeS)$ is locally-compact
whenever $\typeS$ is compact. 
In fact, following the proof of this theorem, the set 
$\sM_r(\typeS)=\{\mu\in \sM(\typeS)\colon \mu(\typeS)\leq r\}$, for $r\geq0$, is compact.\par
Let us write $$\mint{f}{\mu}\coloneqq \int \mu(da)f(a).$$
An important class of functions in $\cf(\sM(\typeS))$ is the algebra of polynomials 
$\polys\left(\sM(\typeS)\right)$ which is the linear span of monomials of the form
\begin{equation}\label{eq:MeasureMonomial}
    F_{\phi,p}(\rho)=\mint{\phi}{\rho^{\otimes p}}, \quad \phi\in \bbf(\typeS^p).
\end{equation}
Here, the space $\typeS^p$ is endowed with the Borel $\sigma$-algebra, which coincides with 
the product $\sigma$-algebra of
$p$ copies of the Borel $\sigma$-albegra on $\typeS$.
By a straightforward extension of Lemma 2.1.2 in \cite{Dawson1993} (extending the arguments 
therein to $\cbf(\sM(\typeS))$), the polynomials $\polys\left(\sM(\typeS)\right)$ are
dense in the topology of uniform convergence on compact sets on $\cf(\sM(\typeS))$, and  convergence determining for
the topology of weak convergence in $\sM(\sM_r(\typeS))$ for
every $r\geq 0$. \par
A function $F\in \cf(\sM(\typeS))$ is said to be differentiable if 
its derivative in the direction of $a\in\typeS$ (more precisely of $\delta_a$) given by
$$
F'(\mu;a) \coloneqq \lim_{\eps\to 0} \frac{F(\mu + \eps \delta_a) - F(\mu)}{\eps},
$$
exists and is continuous as a function of $a\in\typeS$. We denote by $F''(\mu;a,b)$ the
second derivative of $F$, first in the direction of $a$ and then in the direction of $b$; whereas
for higher derivatives 
we write $F^{(\ell)}(\mu;a_1,\cdots,a_\ell)$ for the corresponding sequential derivatives
in the directions of $a_1,\cdots,a_\ell$.
By Lemma 2.1.2 in \cite{Dawson1993}, the polynomials $\polys\left(\sM(\typeS)\right)$ are infinitely differentiable. Their derivatives  can be written in terms of the derivatives of monomials. The first derivative of a monomial is given by
\begin{equation}\label{eq:F'monomial}
    F_{\phi,p}'(\mu;a) = \sum_{i=1}^{p} \mint{\phi}{\mu^{\otimes i-1}\otimes \delta_a \otimes \mu^{p-i-1}}.
\end{equation}
 Multiple derivatives can be computed recursively, for $\vec a =(a_1,\cdots,a_\ell)$, we obtain
\begin{equation}\label{eq:pointDerPolyn}
    F^{(\ell)}_{\phi,p}(\mu;\vec a) = \begin{cases}
     \sum_{\vec m\in \perms(\vec a,\mu,p)}\mint{\phi}{\otimes_{i=1}^p m_i} &\text{ if }\ell\leq p\\
     0 & \text{ otherwise},
    \end{cases}
\end{equation}
where the sum is taken over all the permutations $\vec m\in\perms(\vec a,\mu,p)$, say $\vec m=(m_1,\dots,m_p)$,
of the atomic measures $\delta_{a_1},\dots,\delta_{a_\ell}$ and $(p-\ell)$ copies of $\mu$.
\par
Now, let $\Lambda$ and $\beta_{j,i}^{(\Lambda)}$ be as in section \ref{sec:coalescents}.
The $\Lambda$-Fleming-Viot  process \cite{BertoinLeGall2003} is the process with values in the
space $\sPM(\typeS)$ of probability measures on $\typeS$ 
and generator, applied to functions of the form \eqref{eq:MeasureMonomial}, given by
\begin{align}\label{eq:FVgenerator0}
\FVG F_{\phi,p}(\rho) &= \int_\typeS \rho(da)  \sum_{\ell=2}^p \beta_{p,\ell}^{(\Lambda)} 
\sum_{\vec m\in \perms(\vec a,\mu,p)}\lbr\mint{\phi}{\otimes_{i=1}^p m_i} -\mint{\phi}{\rho^{\otimes p}}\rbr\nonumber\\
&=\int_\typeS \rho(da) \sum_{\ell=2}^p \beta_{p,\ell}^{(\Lambda)}\left( F^{(\ell)}_{\phi,p}(\rho;a)-\binom{p}{\ell} F_{\phi,p}(\rho)\right).
\end{align}
 Here we have made a slight abuse of notation
by writing $F^{(\ell)}_{\phi,p}(\mu;a)$ for the $\ell$-times derivative of $F$, all
in the direction of $a$. The above form of the generator yields the following well-known duality relation between
$\Lambda$-Fleming-Viot processes and $\Lambda$-coalescents. This relation can be extended to a path-wise duality relation via a coupling of both processes that is based on the lookdown construction of \cite{DonnellyKurtz1999}, see section 2 in \cite{les7} for details and also \cite{BirknerBlathEtal2009, GCOSJ} for generalizations.

Let us describe the duality relation. Fix $p\geq 1$ 
and $\phi\in\bbf(\typeS^p)$. 
For any $\pi\in \PS{[p]}$, recall that $\card\pi$ denotes its cardinality, and define
the function $\phi_\pi\colon \typeS^{\card \pi}\to\R$ that results from 
identifying  all the input coordinates  $(a_1,\dots,a_p)$ of $\phi$
according to the blocks of $\pi$. Let $(\pi_1,\dots,\pi_{\card \pi})$ be the enumeration of the blocks of $\pi$
when they are ordered {increasingly} according to their least elements (see e.g. Definition 2.8 in \cite{Bertoin2006}). The function $\phi_\pi$ is given by
\begin{align}\label{eq:defPhiPi}
\phi_\pi(a_1,\cdots,a_{\card \pi}) \coloneqq \phi(a_{\pi(1)},\cdots,a_{\pi(p)}), \text{ where } 
\pi(i) = j \text{ whenever }i\in\pi_j.
\end{align}
Then the duality relation holds for functions of the form 
$$G_\phi(\rho,\LevyMeasure)=\langle \phi_{\pi}, \rho^{\otimes \card\pi}\rangle\equiv\langle \phi_{\pi}, \rho^{\otimes p}\rangle.$$ 
\begin{lemma}[\cite{BertoinLeGall2003,DawsonHochberg1982}]\label{th:FVCoaDuality}
   For fixed $\phi\in\bbf(\typeS^p)$, $p\geq1$, and $\pi=\{\{1\},\cdots,\{p\}\}$; we have
 $$
 \E_{\rho_0}[ \langle \phi_{\pi}, \rho_t^{\otimes p} \rangle ] = \tilde\E_{\pi}[\langle \phi_{\Pi_t}, \rho_0^{\otimes p}\rangle]
 $$
 whenever $\stp{\rho_t}$ is a $\Lambda$-Fleming-Viot process (started at $\rho_0$) under $\P$, 
 and $\stp{\Pi_t}$ is a $(p,\Lambda)$-coalescent process (started at $\pi$) under $\tilde\P$.
\end{lemma}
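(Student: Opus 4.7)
The plan is to apply the standard generator-based Markov duality criterion to the bounded duality function
\begin{equation*}
H(\rho,\pi) \coloneqq \langle \phi_\pi, \rho^{\otimes \#\pi}\rangle, \qquad (\rho,\pi)\in\sPM(\typeS)\times \PS{[p]},
\end{equation*}
which is globally bounded by $\|\phi\|_\infty$ since $\rho$ is a probability measure and $|\phi_\pi|\le\|\phi\|_\infty$.

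First I would verify the pointwise generator identity $\FVG_\rho H(\rho,\pi)=\Op{C}_\pi H(\rho,\pi)$, where $\FVG_\rho$ is the $\Lambda$-FV generator of \eqref{eq:FVgenerator0} acting on the first coordinate (applied to the polynomial $F_{\phi_\pi,\#\pi}$), and $\Op{C}_\pi$ is the $\Lambda$-coalescent generator
\begin{equation*}
\Op{C} f(\pi) = \sum_{\ell=2}^{\#\pi}\beta_{\#\pi,\ell}^{(\Lambda)}\sum_{\substack{I\subset[\#\pi]\\ |I|=\ell}}\bigl(f(\pi^{I})-f(\pi)\bigr),
\end{equation*}
with $\pi^{I}$ denoting the coarser partition obtained from $\pi$ by coagulating the blocks whose indices lie in $I$ (read off from the construction in section \ref{sec:coalescents}). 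To match the two sides I would evaluate $F^{(\ell)}_{\phi_\pi,\#\pi}(\rho;a,\dots,a)$ via \eqref{eq:pointDerPolyn}: the permutations in $\perms$ collapse to the choice of which $\ell$ of the $\#\pi$ argument slots of $\phi_\pi$ receive a copy of $\delta_a$, and the outer integration $\int\rho(da)$ then replaces those $\ell$ arguments by a single one sampled from $\rho$. Unpacking the convention \eqref{eq:defPhiPi} for $\phi_\pi$, the resulting object equals $\langle \phi_{\pi^{I}},\rho^{\otimes(\#\pi-\ell+1)}\rangle$, so that summing over $I$ reproduces $\Op{C}_\pi H(\rho,\pi)$ term by term.

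Second, with the generator identity established and well-posedness of both martingale problems in hand, I would invoke the classical Markov duality theorem: boundedness of $H$ ensures that the map
\begin{equation*}
s\mapsto \E_{\rho_0}\tilde\E_{\Pi_{t-s}}\bigl[H(\rho_s,\Pi_{t-s})\bigr],\qquad s\in[0,t],
\end{equation*}
is differentiable with derivative zero by the matched generators, so its endpoints $s=0$ and $s=t$ coincide and yield the stated identity. For the initial partition $\pi=\{\{1\},\dots,\{p\}\}$ one has $\phi_\pi=\phi$, which recovers the exact form in the lemma.

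The main obstacle is the combinatorial identification in the first step: the sum over subsets $I\subset[\#\pi]$ of position-indices arising from the FV generator must be re-read as a sum over sets of blocks of $\pi$ to be merged by the coalescent, and the operation ``insert $\delta_a$ in the positions of $I$, then integrate against $\rho(da)$'' must be seen to coincide with the block-relabeling $\phi_\pi\mapsto\phi_{\pi^{I}}$. Once \eqref{eq:defPhiPi} is made explicit, this is an immediate consequence of the symmetry of $\phi_\pi$ under permutations of the slots within each merged block, and no genuine difficulty remains.
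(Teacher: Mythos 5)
Your proposal is correct and follows exactly the route the paper itself relies on: the lemma is quoted from \cite{BertoinLeGall2003,DawsonHochberg1982} as a consequence of the form of the generator \eqref{eq:FVgenerator0}, and the paper carries out precisely your generator-matching computation (including the identification of $\int\rho(da)\,F^{(\ell)}_{\phi_\pi,\#\pi}(\rho;a,\dots,a)$ with $\sum_{|J|=\ell}\langle\phi_{\pi^{(J)}},\rho^{\otimes p}\rangle$) in the more general setting of Lemma \ref{le:operatorDuality} and Theorem \ref{th:duality}, of which this lemma is the special case $h\equiv 1$. The combinatorial identification and the boundedness argument you give are exactly the ones needed, so no gap remains.
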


\par
The case $\Lambda=\sigma^2\delta_0$ corresponds to the standard Fleming-Viot process
without mutation and of parameter $\sigma$ which is dual to Kingman's coalescent and in which \eqref{eq:FVgenerator0} becomes
$$
\FVG F(\rho) = \sigma^2 \int_\typeS \rho(da) (F''(\rho;a,a) - F(\rho)).
$$ 
When $\Lambda(\{0\})=0$, the generator \eqref{eq:FVgenerator0} can be written as in \cite{les7},
\begin{equation*}
\FVG F(\rho)=\int_{{(0,1)}} \frac{\Lambda(\dif\zeta)}{\zeta^2} \int_\typeS \rho(\dif{a}) (F(\rho(1-\zeta)+ \zeta\delta_a)-F(\rho)).
\end{equation*}
 Combining both cases we obtain the following form of the generator
\begin{align}\label{eq:FVgenerator}
\FVG F(\rho) =& \sigma^2 \int_\typeS \rho(da) (F''(\rho;a,a) - F(\rho)) \nonumber\\ 
+&\int_{{(0,1)}} \frac{\Lambda(\dif\zeta)}{\zeta^2} \int_\typeS \rho(\dif{a}) (F(\rho(1-\zeta)+ \zeta\delta_a)-F(\rho)).
\end{align}
\par
When $\zeta^{-2}\Lambda(\dif\zeta)$ is finite, the above form of the generator gives
the following picture for the dynamics of the process.
It has jumps of the form
$\rho_{t-} \to \rho_{t-}(1-\zeta)+ \zeta\delta_a$ at the atoms $(t,\zeta)$ of  a 
Poisson point process on $\Rp\times [0,1]$ with intensity $dt\times \zeta^{-2}\Lambda(\dif\zeta)$.
Here the position $a$ of the new atom of size $\zeta$ is chosen randomly according to $\rho_{t-}$. 
After each jump, the process starts as  an independent copy of a standard Fleming-Viot process of parameter $\Lambda(\{0\})$
started at the new state $\rho_t=\rho_{t-}(1-\zeta)+ \zeta\delta_a$.

\subsection{The Dawson-Watanabe process and its Lamperti transformation}\label{sec:DWP}


Here we introduce the Dawson-Watanabe process without mutation/spatial motion. This will
suffice our applications further ahead; the interested reader can refer to \cite{PerkinsBolthausenVaart2004}
for a more general setting.\par
For a fixed parameter $\sigma\in\R$, the Dawson-Watanabe process  without mutation can be defined
as the unique continuous process $\stp{\mu_t}$ on $\sM(\typeS)$ such that for all $\phi\in\cbf(\typeS)$ the process
$$
\quad M_t(\phi)=\mint{\phi}{\mu_t} - \mint{\phi}{\mu_0}
$$
is a martingale with quadratic variation
$$
\QV{M(\phi)}_t = \frac{\sigma^2}{2}\int_0^t \mint{\phi^2}{\mu_s} ds.
$$
Alternatively, it can be defined as the unique solution to the martingale problem for the operator
$(\DWG, D_\DWG)$ given by
\begin{equation}\label{eq:DWgenerator}
\DWG F(\mu) = \frac{\sigma^2}{2}\int_\typeS \mu(da) F''(\mu;a,a) ,
\end{equation}
and
$D_\DWG\coloneqq \{F(\mu)=f(\mint{\phi_1}{\mu},\cdots,\mint{\phi_p}{\mu})\colon \forall 1\leq i\leq p,~\phi_i\in \cbf(\typeS), f\in \czdf{\infty}(\R)\}\subset\D(\DWG).
$
See Corollary 2.23 in \cite{Fitzsimmons1992}.
Let \begin{equation}\label{eq:defD_DWG'}
	D_\DWG'=\left\{F(\mu)=h(\norm{\mu})\mint{\phi}{\left(\frac{\mu}{\norm{\mu}}\right)^{\otimes p}}: h\in D_h,~ \phi\in\cbf(\typeS^p)\right\},
\end{equation}
where
\begin{equation}\label{eq:defD_h}
	D_h \equiv \{h\in \cKdf{\infty}(\Rp)\colon h(0)=0\text{ and }\restr{h}{(0,\infty)}\in \cKf((0,\infty))\}.
\end{equation}
In section \ref{sec:duality} we prove that $(\DWG, D_\DWG')$ is in the bounded point-wise closure of $(\DWG,D_\DWG)$ (in the sense
that for any $F\in D'_{\DWG}$ the pair $(F,\DWG F)$ can be boundedly point-wise approximated by  $(F_n,\DWG F_n)$ with $F_n\in D_\DWG$), from which we obtain the following result.
\begin{proposition}\label{prop:DWG'subsetDDWG}
	We have $D_\DWG'\subset \D(\DWG).$
\end{proposition}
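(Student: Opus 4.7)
The plan is, for each $F\in D_\DWG'$, to construct a sequence $F_n\in D_\DWG$ with $F_n\to F$ and $\DWG F_n\to \DWG F$ both boundedly pointwise; as announced just before the statement, this places $F$ in the bounded pointwise closure of $(\DWG,D_\DWG)$ and hence in $\D(\DWG)$.

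First I would rewrite $F$ as
\[
F(\mu)\;=\;g(\norm{\mu})\,\mint{\phi}{\mu^{\otimes p}},\qquad \phi\in\cbf(\typeS^p),
\]
where $g(x):=h(x)x^{-p}$ on $(0,\infty)$ and $g(0):=0$; since $h\in D_h$ has support in a compact subset of $(0,\infty)$, $g$ extends to a $C^\infty$ function on $\Rp$ with support in some $[a,K]\subset(0,\infty)$. Because $\typeS^p$ is compact Polish, the Stone--Weierstrass theorem furnishes approximations $\phi^{(n)}\to\phi$ uniformly on $\typeS^p$, each $\phi^{(n)}$ being a finite sum of tensor products $\psi_1\otimes\cdots\otimes\psi_p$ with $\psi_i\in\cbf(\typeS)$; I would set $F_n(\mu):=g(\norm{\mu})\mint{\phi^{(n)}}{\mu^{\otimes p}}$. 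To verify $F_n\in D_\DWG$, by linearity it suffices to treat a single tensor product, for which $F_n(\mu)=f(\mint{1}{\mu},\mint{\psi_1}{\mu},\ldots,\mint{\psi_p}{\mu})$ with $f(x_0,x_1,\ldots,x_p)=g(x_0)\prod_i x_i$. To land $f$ in $\czdf{\infty}(\R^{p+1})$ I multiply the monomial factor by smooth cutoffs $\chi(x_i/N)$, $\chi\in\cKdf{\infty}(\R)$, $\chi\equiv 1$ near $0$; on the support $\{\norm{\mu}\leq K\}$ of $g\circ\norm{\cdot}$ one has $|\mint{\psi_i}{\mu}|\leq\norm{\psi_i}_\infty K$, so for $N$ large every cutoff equals $1$ on the relevant range, and the cutoffed $f$ reproduces $F_n$ exactly.

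It remains to check the two convergences. For $F_n\to F$, the decomposition $F_n-F=g(\norm{\mu})\mint{\phi^{(n)}-\phi}{\mu^{\otimes p}}$, supported on $\{\norm{\mu}\leq K\}$, gives $\sup_\mu|F_n(\mu)-F(\mu)|\leq\norm{g}_\infty K^p\norm{\phi^{(n)}-\phi}_\infty\to 0$. For $\DWG F_n\to \DWG F$, expanding $F(\mu+\eps\delta_a)$ to order $\eps^2$ via $g(\norm{\mu}+\eps)=g(\norm{\mu})+\eps g'(\norm{\mu})+\tfrac{1}{2}\eps^2 g''(\norm{\mu})+\lo(\eps^2)$ and \eqref{eq:F'monomial}--\eqref{eq:pointDerPolyn}, then integrating $\int\mu(da)$ as prescribed by \eqref{eq:DWgenerator}, presents $\DWG F(\mu)$ as a finite sum, linear in $\phi$, of functionals of the form $\phi\mapsto\mint{\phi}{\mu^{\otimes p}}$ or $\phi\mapsto\mint{\phi_{ij}}{\mu^{\otimes (p-1)}}$ (with $\phi_{ij}$ the function on $\typeS^{p-1}$ obtained by identifying the $i$-th and $j$-th arguments of $\phi$), each weighted by $g(\norm{\mu})$, $g'(\norm{\mu})$, or $\norm{\mu}g''(\norm{\mu})$. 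All weights are bounded and vanish off $\{\norm{\mu}\leq K\}$, yielding $\sup_\mu|\DWG F_n(\mu)-\DWG F(\mu)|\leq C\norm{\phi^{(n)}-\phi}_\infty\to 0$. The main obstacle worth flagging is precisely this last estimate: one must verify that every $\norm{\mu}^k$ factor generated by the product rule is absorbed by the compactly supported $g$, $g'$, or $g''$, which is automatic because each of these is supported in $[a,K]$.
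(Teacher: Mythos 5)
Your proposal is correct and follows essentially the same route as the paper: reduce to tensor-product test functions $\phi$ via Stone--Weierstrass, check that the resulting cylinder functions lie in $D_\DWG$ (the paper absorbs $x^{-p}$ into $h$ exactly as you absorb it into $g$), and conclude via the bounded pointwise closure of $(\DWG,D_\DWG)$ for the martingale problem. The only cosmetic differences are that you obtain uniform rather than merely bounded pointwise convergence of $(F_n,\DWG F_n)$ and compute $\DWG F$ by direct Taylor expansion where the paper reuses its formula \eqref{eq:Gdiffeq1}; both suffice.
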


It is well known that the time-changed frequency process $\stp{\frac{\mu_{c_1(t)}}{\norm{\mu_{c_1(t)}}}}$ is a standard Fleming-Viot process \cite{les7,Perkins1992}.  We end this section with our first application of Theorem \ref{th:smhssbijection}, which complements this result,
and which we will then generalize in Theorem \ref{th:mvpLamperti}. {Recall the process $\stp{\nu_t}$ in the introduction that has generator of the form \eqref{eq:generatormvMp}. The following theorem provides a formal construction of this process in the  case when $\drift=0$ and $\Lambda=0$. The general construction is given in Theorem \ref{th:MVPsmhExist} further below.}
\begin{proposition}\label{prop:DWselfsimilar}
    The process $\stp{\mu_t}$ is a $1$-SS Markov process. The time changed process $\stp{\nu_t}= \stp{\mu_{c_1(t)}}$
    is SMH and its generator $\Gdiff{\sigma}$ has the form
    \begin{equation}\label{eq:mvpSMHdiffgenerator}
    \Gdiff{\sigma}F(\nu)\coloneqq \norm{\nu} \DWG F(\nu)
    \end{equation}
    in the set 
    \begin{align}\label{eq:domainD1}
    {D}_{\Gdiff{\sigma}}\coloneqq\big\{F\in \D(\DWG)\colon 
    & F\in \cdf{2}(\sM(\typeS)) \quad \& \quad  \exists k\geq1,C\geq0\colon
    \nonumber\\
    &\forall a\in\typeS, \nu\in\sM(\typeS);\quad  \abs{F(\nu)} + \abs{F'(\nu;a)} + \abs{F''(\nu;a,a)}
    \leq C(1+\cdots+\norm{\nu}^k)
    \big\}.
    \end{align}
    Furthermore, the process $\stp{\rho_t,\xi_t}=\stp{\frac{\nu_t}{\norm{\nu_t}},\log(\norm{\nu_t})}$ is a MAP 
    with $\stp{\rho_t}$ being a standard Fleming-Viot process of parameter $\sigma$, and $\stp{\xi_t}$ a continuous Lévy process
    with diffusion parameter {$\sigma$} and drift parameter 
    $$\driftF\coloneqq-\sigma.$$
\end{proposition}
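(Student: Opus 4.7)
The proof decomposes into three stages matching the three claims.

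\emph{Stage 1 ($1$-SS property).} I would apply Proposition \ref{prop:smhssGenerators} with $\alpha = 1$, for which it suffices to check the intertwining $\DWG\SSS_b = b\,\SSS_b\DWG$ on the core $D_\DWG$. This is a direct computation: since $(\SSS_b F)''(\mu;a,a)=b^2 F''(b\mu;a,a)$, one obtains
\[
\DWG\SSS_b F(\mu) \;=\; \frac{\sigma^2}{2}\int_\typeS \mu(da)\,b^2 F''(b\mu;a,a) \;=\; b\,\SSS_b\DWG F(\mu).
\]
Uniqueness of the martingale problem for $(\DWG, D_\DWG)$, required by the proposition, is the classical Fitzsimmons result cited at the beginning of the subsection.

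\emph{Stage 2 (SMH property and generator of $\nu$).} Theorem \ref{th:smhssbijection}(1) applied with $\alpha = 1$ immediately gives that $\nu_t=\mu_{c_1(t)}$ is a standard SMH process. The generator identity $\Gdiff{\sigma}F = \norm{\nu}\,\DWG F$ on $D_{\Gdiff{\sigma}}$ is then the standard consequence of a multiplicative random time change: since $c_1'(t)=\norm{\nu_t}$ by the inverse-function theorem, pulling the martingale $F(\mu_t)-\int_0^t \DWG F(\mu_s)\,ds$ through $c_1$ produces $F(\nu_t)-\int_0^t \norm{\nu_s}\,\DWG F(\nu_s)\,ds$. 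The polynomial-growth hypothesis built into $D_{\Gdiff{\sigma}}$, together with standard polynomial-moment bounds for the Feller diffusion $\norm{\mu_t}$, ensure this is a true martingale; the extension of the martingale problem from the bounded core $D_\DWG$ to the larger polynomial-growth domain $D_{\Gdiff{\sigma}}$ is precisely the role of Proposition \ref{prop:DWG'subsetDDWG}. This domain extension is the main obstacle of the proof, but it is an anticipated one.

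\emph{Stage 3 (MAP decomposition).} Proposition \ref{prop:SMHtoMAP} applied to the SMH process $\nu$ immediately gives that $(\rho_t,\xi_t)=(\nu_t/\norm{\nu_t},\log\norm{\nu_t})$ is a MAP. That $\rho$ is the standard Fleming--Viot of parameter $\sigma$ is the classical result of \cite{Perkins1992,les7} quoted just before the proposition. To identify $\xi$, specialise $F(\nu) = h(\norm{\nu})$ in $\Gdiff{\sigma}$: since $F''(\nu;a,a) = h''(\norm{\nu})$ one obtains $\Gdiff{\sigma}F(\nu) = \frac{\sigma^2}{2}\norm{\nu}^2 h''(\norm{\nu})$, the generator of the geometric Brownian motion $dZ_t = \sigma Z_t\,dW_t$. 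Applying It\^o's formula to $\log Z_t$ then yields a continuous L\'evy process whose drift and diffusion parameters match the claimed characteristic triplet, specialising the general statement of the introduction to $\drift = 0$ and $\LevyMeasure = 0$.
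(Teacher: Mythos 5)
Your outline follows the paper's own route almost step for step: the $1$-SS property via Proposition \ref{prop:smhssGenerators} and the intertwining computation $\SSS_{b^{-1}}\DWG\SSS_b F=b\,\DWG F$, the SMH property and the form \eqref{eq:mvpSMHdiffgenerator} via the Lamperti time change of Theorem \ref{th:smhssbijection} combined with the Ethier--Kurtz time-change theorem, the Fleming--Viot identification by citation of \cite{Perkins1992,les7}, and the MAP property from the general transformation results. Two points deserve correction. First, the extension of the martingale problem from the bounded core $D_\DWG$ to the polynomial-growth domain $D_{\Gdiff{\sigma}}$ is \emph{not} the role of Proposition \ref{prop:DWG'subsetDDWG}: that proposition only places a class of \emph{bounded} functions $h(\norm{\mu})\langle\phi,(\mu/\norm{\mu})^{\otimes p}\rangle$, with $h$ compactly supported, inside $\D(\DWG)$, and it is used later for the duality machinery, not here. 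The paper handles the extension by a direct argument: it bounds $\sup_{0\le s\le t}\abs{N_{c_1(s)}}^2$ for $N_t=F(\mu_t)-\int_0^t\DWG F(\mu_u)\,du$ using the representation $\norm{\nu_s}=e^{\xi_s}$ with $\xi$ a Brownian motion with drift, applies Doob's $\Lp{2k}$ inequality to the submartingale $e^{2kB_s}$, and concludes with optional sampling. Your appeal to ``polynomial-moment bounds for the Feller diffusion $\norm{\mu_t}$'' is in the right spirit but does not by itself control the supremum over the \emph{time-changed} trajectory (a priori the random clock $c_1$ could compress a large amount of operational time into $[0,t]$); the geometric-Brownian-motion representation of $\norm{\nu}$ is precisely what resolves this, so you should argue after the time change, as the paper does. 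Second, for the L\'evy component the paper works with the total-mass process \emph{before} the time change, invoking Theorems 4.1 and 5.1 of \cite{Lamperti1972} for the Feller diffusion, whereas you identify the geometric Brownian motion from the time-changed generator and apply It\^o; this is a legitimate alternative, but note that It\^o gives $\xi_t=\sigma B_t-\tfrac{\sigma^2}{2}t$, so you must reconcile this with the paper's labelling of the drift parameter as $\driftF=-\sigma$ (a normalization convention) rather than claim a literal match of coefficients.
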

\begin{proof}
Taking functions of the form $F(\mu)=f(\mint{1}{\mu})$
with $f\in\czdf{\infty}(\R)$ in the generator \eqref{eq:DWgenerator}, it is easy to see that the total mass process $\stp{\norm{\mu_t}}$ has generator of the form
$\PSSMPG f(x)=\frac{\sigma^2}{2}xf''(x)$ on the set $\{f\in \czdf{2}(\R)\colon xf''(x)\in\cf([0,\infty])\}\subset \D(\PSSMPG)$. The latter
 conforms with the general form of the generator of a continuous positive 1-SS Markov process. 
By Theorem 5.1 in \cite{Lamperti1972} the process $\stp{\norm{\mu_t}}$ is then a uniquely determined diffusion (Feller's diffusion), 
 with absorbing state $0$. Furthermore, by Theorem 4.1 therein
the time-changed process 
$\stp{\log(\norm{\mu_{c_1(t)}})}$ is a continuous Lévy process with diffusion parameter $\sigma$ and drift parameter 
$\driftF=-\sigma.$ 
On the other hand, it is well known  from Theorem 1.1 $i)$ in \cite{les7} (see also \cite{Perkins1992}) that the time-changed frequency process
     $\stp{\rho_t}$ defined by $\rho_t=\frac{\mu_{c_1(t)}}{\norm{\mu_{c_1(t)}}}$
    is a standard Fleming-Viot process of parameter $\sigma$.
\par
We now check that $\stp{\nu_t}$ is characterized by \eqref{eq:mvpSMHdiffgenerator}.
Since $\stp{\norm{\mu_t}}$ is continuous and absorbed at 0 we have
$$
\inf_{s>0}\lbr s\colon \norm{\mu_s} = 0 \rbr \aseq 
\inf_{s>0}\lbr s\colon \int_0^s\frac{1}{\norm{\mu_s}} ds = \infty \rbr,
$$
which implies, through a direct application of Theorem VI.1.3 in \cite{EthierKurtz86},
that $\stp{\nu_t}$ is a
solution to the martingale problem for $\Gdiff{\sigma}$ on the domain
$\{F\in \D(\DWG)\colon \norm{\cdot}\DWG F(\cdot) \in \bbf(\sM(\typeS))\}$. 
Thanks to the fact that
the time-changed process $\stp{\norm{\nu_t}}$ is the exponential of the continuous Lévy process given by
$\xi_t=\driftF t+\sigma B_t$, the domain
can easily be extended to the set $D_{\Gdiff{\sigma}}$ by a mild adaptation of the proof of Theorem VI.1.3 in \cite{EthierKurtz86}.
Indeed, we first note that $N_t=F(\mu_{t}) - \int_0^{t} \DWG F(\mu_{u}) du$ is a martingale whenever $F\in \D(\DWG)$. Furthermore,
 for $F\in D_{\Gdiff{\sigma}}\subset \D(\DWG)$, we have the bound
 
\begin{align*}
&\sup_{0\leq s \leq t}\abs{ N_{c_1(s)}}^2=\sup_{0\leq s\leq t}\abs{F(\mu_{c_1(s)}) - \int_0^{c_1(s)} \DWG F(\mu_{c_1(u)}) du}^2\\
&=\sup_{0\leq s\leq t} \abs{F(\nu_s) - \int_0^{s}\norm{\nu_u}\DWG F(\nu_u) du}^2
\leq 2\norm{F}_\infty^2 + 2\sup_{0\leq s\leq t} \left( \int_0^{s}\norm{\nu_u}\abs{\DWG F(\nu_u)} du \right)^2.
\end{align*}
Since $F\in  D_{\Gdiff{\sigma}}$, the last term in the r.h.s. is bounded by
\begin{align*}
	\sup_{0\leq s\leq t} \left( \int_0^{s}\norm{\nu_u}\abs{\DWG F(\nu_u)} du \right)^2
&\leq t^2\sup_{0\leq s\leq t} C(\norm{\nu_s}+\cdots +\norm{\nu_s}^k)^2
\\&\leq t^2e^{kt}\sup_{0\leq s\leq t} (	1+ \cdots + e^{kB_s})^2
\end{align*}
for some $k\geq1$. By Doob's $\Lp{2k}$ inequality applied to the submartingale
$\stp{e^{2kB_t}}$ we obtain
\begin{align}\label{eq:DiffSMHnormBound}
\E\left[\sup_{0\leq s\leq t} \abs{ \int_0^{s}\norm{\nu_u}\DWG F(\nu_u) du }\right]
&\leq\E\left[\sup_{0\leq s\leq t} \left( \int_0^{s}\norm{\nu_u}\abs{\DWG F(\nu_u)} du \right)^2\right]
\nonumber\\
&\leq t^2 e^{kt}\E[(\sup_{0\leq s\leq t} 1 + \cdots + e^{kB_s})^2]<\infty
\end{align}
so that $\E\left[\sup_{0\leq s \leq t}\abs{ N_{c_1(s)}}^2\right]<\infty.$
Using H\"older's inequality we also obtain
\begin{align*}
  \lim_{T\to\infty}  \E[\abs{N_{T}}; c_1(t)>T]
  &\leq\lim_{T\to\infty} \E[\sup_{0\leq s \leq t}\abs{ N_{c_1(s)}}^{2}]^{1/2} \P(c_1(t)>T)^{1/2}
  = 0
\end{align*}
where we have used $c_1(t)\asle\infty$. The optional sampling theorem (e.g. Theorem II.2.13 in \cite{EthierKurtz86})
then implies that $\stp{N_{c_1(t)}}$ is a $\F_{c_1(t)}$-martingale. Moreover
$$
\der{t}\E_\nu[F(\nu_t)] = \der{t}\E_\nu\left[\int_0^t \norm{\nu_u}\DWG F(\nu_u) du\right] = \norm{\nu}\DWG F(\nu)
$$
where we have used dominated convergence using the bound in \eqref{eq:DiffSMHnormBound}.

The MAP property for $\stp{\rho_t,\xi_t}$ will follow from Theorem \ref{th:smhssbijection} once we prove that $\stp{\mu_t}$ is 1-SS. The latter follows from 
Proposition \ref{prop:smhssGenerators}. As stated before, the solutions to the martingale problem for
$(\DWG,D_\DWG)$ are unique.
Also note that
if $H\in\polys(\sM(\typeS))$ then $\SSS_b H\in \polys(\sM(\typeS))$ which implies $\SSS_b D_\DWG\subset D_\DWG$. It thus remains to verify $\ref{itTh:ssGenerator}$ in Proposition \ref{prop:smhssGenerators}. Note that
$$
\DWG\SSS_bF(\mu) =  \frac{\sigma^2}{2}\int_\typeS \mu(da) (\SSS_bF)''(\mu;a,a)
=  \frac{\sigma^2}{2}\int_\typeS \mu(da) b^2F''(b\mu;a,a)
$$
so that
\begin{equation}\label{eq:DWgenIsSS}
\SSS_{b^{-1}}\DWG\SSS_bF(\mu) = b\frac{\sigma^2}{2}\int_\typeS \mu(da)F''(\mu;a,a)=b\DWG F(\mu).
\end{equation}
\end{proof}

\section{Main results for self-similar populations}\label{sec:genealogySSMVps}
\subsection{Construction and duality theorems}\label{sec:ssmvpsMainResults}
The construction of the Feller process $\stp{\nu_t}$ having generator of the form \eqref{eq:generatormvMp}
is split
into several intermediary results. We first provide a Poissonian construction 
when the measure $\zeta^{-2}\Lambda(d\zeta)$ is finite, and then we extend the construction to any finite measure $\Lambda$ through a weak limit, the convergence of the generators, and the identification of the limit. This construction is made explicit in section \ref{sec:nucons}.
\begin{theorem}\label{th:MVPsmhExist}
	There exists a Feller SMH process with generator $(\Gall,\D(\Gall))$ of the form \eqref{eq:generatormvMp}
	on the set $D_\Gall\subset\D(\Gall)$ given by
	\begin{align}\label{eq:coreSMH}
		D_\Gall 
		\coloneqq\big\{F\in \D(\DWG)\colon  
		& F\in \cdf{2}(\sM(\typeS)),\quad \text{and}\quad \exists C\geq0\colon
		\nonumber\\
		& \forall a\in\typeS, \nu\in\sM(\typeS);\quad  \abs{F(\nu)} + \norm{\nu}\abs{F'(\nu;a)} + \norm{\nu}^2\abs{F''(\nu;a,a)}
		\leq C
		\big\}.
	\end{align}
	It is also the unique solution to the martingale problem for $(\Gall, D'_{\Gall})$ where $D'_{\Gall}$ is given by
	\begin{align}\label{eq:GDualityDomain}
		D_{\Gall}' \coloneqq \big\{F(\nu) = G_{p,\phi,h}(\nu,\tilde\pi,z) \colon\quad
		&h\in D_h,\nonumber\\
		&p\geq 1, \phi\in\cbf(\typeS^p), \tilde\pi \in\PS{\infty}, z\in \Rp
		\big\} 
	\end{align}
	and satisfies $D'_{\Gall}\subset D_{\Gall}.$
	\par
	Moreover, the process $(\rho_t,\xi_t)_{t\geq 0}$ where $\rho_t=\nu_t/\norm{\nu_t}$
	and $\xi_t=\log(\norm{\nu_t})$ is a MAP. Furthermore, $\stp{\rho_t}$ is a $\Lambda$-Fleming-Viot process,
	whereas $\stp{\xi_t}$ is a Lévy process with characteristic triplet $(\driftF+\drift, {\sigma}, \LevyMeasure)$, {where we recall $\driftF=-\sigma$ and $\LevyMeasure(d\zeta)$ is the pushforward of the measure $\zeta^{-2}\Lambda(d\zeta)$ under the transformation $\zeta\to -\log(1-\zeta)$ on $(0,1)$}.
\end{theorem}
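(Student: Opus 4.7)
The plan is to construct $\stp{\nu_t}$ in two stages: first, when $\zeta^{-2}\Lambda(d\zeta)$ is a finite measure, I would build $\stp{\nu_t}$ pathwise by interlacing the continuous SMH process from Proposition \ref{prop:DWselfsimilar} with jumps driven by a Poisson point process on $\Rp\times\typeS\times(0,1)$ of intensity $dt\otimes \nu_{t-}(da)/\norm{\nu_{t-}}\otimes \zeta^{-2}\Lambda(d\zeta)$, where the jump $\nu_{t-}\to \nu_{t-}+\norm{\nu_{t-}}\frac{\zeta}{1-\zeta}\delta_a$ adds a new atom whose location is sampled from the current empirical distribution. Since both the diffusive generator $\Gdiff{\sigma}$ (verified SMH in \eqref{eq:DWgenIsSS}) and the jump part $\Gjump{\Lambda}$ are invariant under the $\SSS_{b^{-1}}\cdot \SSS_b$ conjugation, the resulting process is SMH by Proposition \ref{prop:smhssGenerators}; its generator agrees with $\Gall$ on $D_\Gall$ by a direct computation combining \eqref{eq:mvpSMHdiffgenerator} with the finite-intensity Lévy--type correction on $D_\Gall$, the growth bound in \eqref{eq:coreSMH} being tailored to make the integrals well-defined. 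The Feller property follows from the explicit Poissonian structure and continuity of the continuous part.

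For general $\Lambda$, I would truncate to $\Lambda_n=\Lambda\vert_{(1/n,1)}$, obtain processes $\stp{\nu^n_t}$ as above, and pass to a limit. The key intermediate step is \emph{convergence of generators}: on $D_\Gall$, the centered jump integral $\Gjump{\Lambda_n}F(\nu)$ converges boundedly pointwise to $\Gjump{\Lambda}F(\nu)$, thanks to the second-order Taylor cancellation in the definition of $\Gjump{\Lambda}$ and the $\norm{\nu}^2|F''|$ bound built into \eqref{eq:coreSMH}. I would then invoke tightness of $\stp{\nu^n_t}$ (derivable from tightness of $\stp{\norm{\nu^n_t}}$ as exponentials of converging Lévy processes, combined with the $\Lambda$-FV-type tightness of the frequency coordinates) and identify any weak subsequential limit as a solution of the martingale problem for $(\Gall,D_\Gall)$ via Theorem 4.8.10 in Ethier--Kurtz. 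Uniqueness of this martingale problem on the smaller core $D'_\Gall$, asserted in the theorem, I would derive from the new duality of Theorem \ref{th:duality}: the functions $G_{p,\phi,h}$ are precisely the test functions for which the paired coalescent--Lévy process $\stp{\Pi_t,Z_t}$ provides a closed moment-like recursion, forcing equality of finite-dimensional distributions on any two solutions.

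Once the process exists and its martingale problem is uniquely solved, the MAP property of $\stp{(\rho_t,\xi_t)}=\stp{(\nu_t/\norm{\nu_t},\log\norm{\nu_t})}$ follows immediately from Proposition \ref{prop:SMHtoMAP} applied to the SMH process $\nu$. To identify the marginals, I would apply $\Gall$ to test functions depending only on $\norm{\nu}$ (respectively only on $\nu/\norm{\nu}$). Taking $F(\nu)=f(\norm{\nu})$ and using the change of variable $\zeta\mapsto -\log(1-\zeta)$ collapses $\Gjump{\Lambda}$ into the standard Lévy--Khintchine form with jump measure $\LevyMeasure$ on $(0,\infty)$ and yields drift $-\sigma+\drift$ and diffusion coefficient $\sigma$ after accounting for the $-\sigma$ drift coming from the Dawson--Watanabe Lamperti transform of Proposition \ref{prop:DWselfsimilar}. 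Taking $F(\nu)=\mint{\phi}{(\nu/\norm{\nu})^{\otimes p}}$ shows that $\Gall F$ depends only on $\nu/\norm{\nu}$ and reduces algebraically to \eqref{eq:FVgenerator}, identifying $\stp{\rho_t}$ with a $(\Lambda+\sigma^2\delta_0)$-Fleming--Viot process.

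The main obstacle I expect is the interplay between the unbounded-in-$\norm{\nu}$ growth of individual generator summands and the required boundedness/regularity on the cores: verifying $D'_\Gall\subset D_\Gall$ requires showing that functions $G_{p,\phi,h}$, despite involving $h$ with compact support away from $0$ and the nonsmooth $\norm{\cdot}$-normalization, admit bounded directional derivatives satisfying the $C$-uniform bounds of \eqref{eq:coreSMH}; and controlling the Taylor remainder uniformly across the weak-limit approximation demands the technical moment estimates promised in section \ref{sec:technicalAndGenerators} for $\stp{\norm{\nu_t}}=e^{\xi_t}$. These estimates are what ultimately justify the bounded pointwise convergence of generators and the exchange of limits that underpin both existence and the identification of the limiting dynamics.
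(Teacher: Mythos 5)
Your proposal follows essentially the same route as the paper: a Poissonian interlacing construction when $\zeta^{-2}\Lambda(d\zeta)$ is finite, truncation of $\Lambda$ plus tightness and bounded convergence of generators (the paper's Lemma \ref{le:boundOnGLambda} and the maximal inequality of Lemma \ref{le:LevyExponentialMoments}) to pass to the limit, uniqueness via the duality of Theorem \ref{th:duality}, the MAP property via Proposition \ref{prop:SMHtoMAP}, and identification of the marginals by restricting $\Gall$ to radial and frequency test functions. The only cosmetic differences are that the paper truncates to $(1/n,1/2]$ and adds the large jumps back separately, and it establishes tightness via the Aldous--Rebolledo criterion applied to polynomial test functions rather than coordinate-wise.
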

For the identification of the limit a new duality relation will be needed.
The dual process for $\stp{\nu_t}$ is a two coordinate process $(\Pi_t,Z_t)_{t\geq0}$ taking values  in $\PS{\infty}\times \Rp$. Its
dynamics are characterized by the parameters $(\drift,\sigma,\Lambda)$ appearing in \eqref{eq:generatormvMp}.  
The first coordinate $\stp{\Pi_t}$ is a $(\Lambda+\sigma^2\delta_0)$-coalescent process on $\PS{\infty}$. The second coordinate is such that  $\stp{\log Z_t}$ is a Lévy process that can be characterized by the triplet 
$(\drift,\sigma,\Lambda)$. More precisely, its Lévy
 exponent is of the form
     \begin{equation}\label{eq:LevyCharacteristic}
\Psi(\theta)=i(\driftF+\drift)\theta + \frac{\sigma^2}{2}\theta^2 +   
\int_{0}^{1}\frac{\Lambda(d\zeta)}{\zeta^2} \lbr 1 - e^{-i\theta\log(1-\zeta)} - i\theta\abs{\log(1-\zeta)}\Ind{\zeta\le 1/2} \rbr.
\end{equation} 
In the standard notation, the Lévy measure is $\LevyMeasure(d\zeta)$. Interestingly, the two processes $\stp{Z_t}$ and $\stp{\norm{\nu_t}}$ are equal in law. \par
The two processes $\stp{\Pi_t}$ and $\stp{\log Z_t}$ are coupled through a common Poisson point process of intensity $dt\otimes\zeta^{-2}\Lambda(d\zeta)$. When $\zeta^{-2}\Lambda(d\zeta)$ is finite this point process has finitely many points on any bounded time interval, which can be enumerated increasingly according to the first (time) coordinate. These points drive the sequential coagulations of $\stp{\Pi_t}$ and the jumps of $\stp{\log Z_t}$. 
The construction is carefully described in section \ref{sec:dualConstruction}.


The generator of the process $\stp{\Pi_t,Z_t}$ has the form $\Gdall$ on the set $D_{\Gdall}$ which we define in  \eqref{eq:dualGenerator} and \eqref{eq:dualGeneratorDomain} further below.  In order to avoid the repetition of long mathematical expressions, we define the operator $(\Gdall,D_{\Gdall})$ in terms
of a  ``bridge'' operator $(\GHall,D_{\GHall})$ acting on functions $G\in \bbf\left(\sM(\typeS),\PS{\infty},\Rp\right)$. The operator $(\GHall,D_{\GHall})$ captures a duality relation, at the level
of operators, between $(\Gall,D_{\Gall})$ and $(\Gdall,D_{\Gdall})$ (see Lemma \ref{le:operatorDuality} below). 
The operator $\Gdall$ will then act in
the same way as $\GHall$ but on functions
of the form $G(\nu,\cdot,\cdot)$ with $\nu\in\sM(\typeS)$ held fixed. 
Furthermore, the set ${D_{\GHall}}\subset \bbf\left(\sM(\typeS)\times \PS{\infty}\times \Rp\right)$ on which
$\GHall$ will be defined, will also correspond to the set of functions for which the processes $\stp{\nu_t}$ and $\stp{\Pi_t, Z_t}$ will be in duality (Theorem \ref{th:duality}).
\par  Let us then introduce  $(\GHall,D_{\GHall})$. To ease notation in the following, $\tilde\pi$ will refer to an arbitrary partition in $\PS{\infty}$; whereas $\pi$ will refer to
a corresponding restricted partition 
\begin{equation}\label{eq:notationpi}
	\pi=\restr{\tilde \pi}{p}
\end{equation} for some arbitrary $p\geq1$. Further, for a partition $\pi\in\PS{[p]}$
and a subset $J\subset [p]$, let $\pi^{(J)}$ be the partition formed by coagulating
the blocks $(\pi_i)_{i\in J}$ into a single new block. Also
set $\pi^{(\emptyset)}=\pi=\pi^{(\{i\})}$ for all $i\in[p]$. Let us also write $\{J\subset [p]\colon \card J=0\}\equiv\{\emptyset\}$. \par
Consider functions $G_{p,\phi,h}\in \bbf\left(\sM(\typeS),\PS{\infty},\Rp\right)$ of the  form:
\begin{align*}
	G_{p,\phi,h}(\nu,\tilde\pi,z) &\coloneqq h(\norm{\nu} z)H^{(\phi)}_\pi(\nu),\nonumber\\
	H^{(\phi)}_\pi(\nu)&\equiv H_\pi(\nu)\coloneqq\mint{\phi_\pi}{\left(\frac{\nu}{\norm{\nu}}\right)^{\otimes p}}
,\nonumber
\end{align*}
where  $p\geq1,\phi\in\cbf(\typeS^p)$ and $h\in D_h$ are arbitrary (recall that $D_h$ is defined in \eqref{eq:defD_h}).
The operator 
$(\GHall,D_{\GHall})$ is defined for this type of functions by
\begin{align}\label{eq:jointGenerator}
	\GHall G_{p,\phi,h}(\nu,\tilde\pi,z) &\coloneqq \drift \norm{\nu}zh'(\norm{\nu}z)H_\pi(\nu) + \frac{\sigma^2}{2}\norm{\nu}^2z^2h''(\norm{\nu} z)H_\pi(\nu)
	\nonumber\\
	&+\sigma^2h(\norm{\nu}z) \sum_{\substack{J\subset [p]\\ \card J=2}} \lbr   H_{\pi^{(J)}}(\nu)
	-   H_{\pi}(\nu)\rbr\nonumber\\
	&+\int_{(0,1)}\frac{\Lambda(d\zeta)}{\zeta^2}\Bigg\{\sum_{\ell = 0}^{p}  \sum_{\substack{J\subset [p]\\ \card J=\ell}} (1-\zeta)^{p-\ell}\zeta^{\ell} \left(    h\left(\frac{\norm{\nu}z}{1-\zeta}\right) H_{\pi^{(J)}}(\nu)
	- h(\norm{\nu}z)  H_{\pi}(\nu)\right)\nonumber\\
	&\quad\quad\quad\quad\quad\quad- \norm{\nu}zh'(\norm{\nu}z)H_\pi(\nu)\abs{\log(1-\zeta)}\Ind{\zeta\le 1/2}\Bigg\};
\end{align}
on the set 
\begin{align}\label{eq:domainJointGenerator}
	D_{\GHall}\coloneqq  \big\{G_{p,\phi,h}(\nu,\tilde\pi,z)\colon \quad
	& h\in D_h, 
	p\geq 1,\phi\in\cbf(\typeS^p)
	\big\}.
\end{align}
In Lemma \ref{le:dualGeneratorBound} we prove that this is always well defined, and in fact uniformly bounded 
on $(\nu,\pi,z)$,
whenever $G_{p,\phi,h}\in D_{\GHall}$. \par
Having introduced $\GHall$, the generator $\Gdall$ of
the process $\stp{\Pi_t,Z_t}$ is defined on 
\begin{align}\label{eq:dualGeneratorDomain}
	D_{\Gdall} \coloneqq \big\{G(\tilde\pi,z) = G_{p,\phi,h}(\nu,\tilde\pi,z) \colon\quad
	&h\in D_h,
	p\geq 1, \phi\in\cbf(\typeS^p), \nu\in\sM(\typeS)
	\big\}. 
\end{align}
by
\begin{align}\label{eq:dualGenerator}
	\Gdall G(\tilde\pi,z) &\coloneqq \GHall G_{p,\phi,h}(\nu,\tilde\pi,z).
\end{align}

\begin{theorem}\label{th:FellerDual}
	There exists a Feller process $\stp{\Pi_t, Z_t}$ with generator of the form $\Gdall$ on $D_{\Gdall}$ which is the unique solution to the martingale
	problem for $(\Gdall,D_{\Gdall})$. Moreover, the process $\stp{\log(Z_t)}$ is a Lévy process as that in Theorem \ref{th:MVPsmhExist}, whereas $\stp{\Pi_t}$ is a $(\Lambda+\sigma^2\delta_0)$-coalescent.
\end{theorem}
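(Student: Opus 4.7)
The plan is to construct the process $\stp{\Pi_t, Z_t}$ explicitly by Poissonian coupling, verify directly that its generator acts as $\Gdall$ on $D_{\Gdall}$, and obtain uniqueness from the dual description through the operator $\GHall$ together with Theorem \ref{th:MVPsmhExist}.

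For the construction I would first assume that the jump measure $\zeta^{-2}\Lambda(d\zeta)$ is finite. Let $N = \sum_i \delta_{(t_i,\zeta_i)}$ be a Poisson point process on $\Rp\times(0,1)$ with intensity $dt\otimes\zeta^{-2}\Lambda(d\zeta)$, independent of a standard Brownian motion $\stp{B_t}$ and of the Poisson atoms driving a pure Kingman $\sigma^2\delta_0$-coalescent. Define
$$L_t=(\driftF+\drift)t+\sigma B_t+\sum_{t_i\le t}\bigl(-\log(1-\zeta_i)\bigr) - t\int_{\zeta\le 1/2}\tfrac{\Lambda(d\zeta)}{\zeta^2}|\log(1-\zeta)|,$$
set $Z_t=e^{L_t}$, and construct $\stp{\Pi_t}$ by superposing Kingman pairwise coagulations at rate $\sigma^2$ with, at each atom $(t_i,\zeta_i)$ of $N$, a coin-flip coagulation in which each block of $\Pi_{t_i-}$ participates independently with probability $\zeta_i$. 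The coupling makes $\stp{\Pi_t,Z_t}$ strong Markov and c\`adl\`ag on $\PS{\infty}\times\Rp$, and the Feller property follows from that of the building blocks and the finiteness of the jump intensity. The general case (when $\zeta^{-2}\Lambda(d\zeta)$ is only $\sigma$-finite) is handled through truncations $\Lambda_n = \Ind{\zeta > 1/n}\Lambda$, with appropriate drift compensation to maintain \eqref{eq:LevyCharacteristic}, followed by a weak limit; tightness of each coordinate is standard, and the Kingman--\cite{Pitman99} consistency of $(\Lambda_n + \sigma^2\delta_0)$-coalescents under restriction to $[p]$ delivers the limiting coalescent.

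I would then check that the constructed process solves the martingale problem for $(\Gdall,D_{\Gdall})$. For fixed $\nu\in\sM(\typeS)$ and $G_{p,\phi,h}\in D_{\GHall}$, Itô's formula applied to $G_{p,\phi,h}(\nu,\Pi_t,Z_t)$ produces three contributions: the Brownian/drift terms match the first two summands in \eqref{eq:jointGenerator}; the Kingman coagulations match the $\sigma^2$-sum over pairs $J$ with $\card J=2$; and the Poisson integral, after expanding the law of how $\ell$ out of $p$ blocks jointly participate by the binomial weight $(1-\zeta)^{p-\ell}\zeta^{\ell}$, matches the third summand, with the compensator of small jumps providing the $\abs{\log(1-\zeta)}\Ind{\zeta\le 1/2}$ correction. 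The boundedness granted by Lemma \ref{le:dualGeneratorBound} secures integrability throughout.

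The core step is uniqueness, which I would derive from the duality relation (Theorem \ref{th:duality}): for every $G_{p,\phi,h}\in D_{\GHall}$ and every $\nu,\tilde\pi_0,z_0$,
$$\E^\nu\left[G_{p,\phi,h}(\nu_t,\tilde\pi_0,z_0)\right] = \tilde\E^{\tilde\pi_0,z_0}\left[G_{p,\phi,h}(\nu,\Pi_t,Z_t)\right].$$
Since Theorem \ref{th:MVPsmhExist} gives uniqueness of $\stp{\nu_t}$ as the solution to the martingale problem for $(\Gall,D'_\Gall)$, the left-hand side is determined. The class $\{G_{p,\phi,h}(\nu,\cdot,\cdot)\}$ obtained by varying $p,\phi,h$ and $\nu$ is measure-determining on $\PS{\infty}\times\Rp$ (polynomials in $\phi$ generate the weak topology on the partition coordinate via consistent projections to $\PS{[p]}$, and varying $h\in D_h$ separates points of $\Rp$), so the one-dimensional marginals of any solution to the martingale problem for $(\Gdall, D_{\Gdall})$ are pinned down; the Markov property then fixes all finite-dimensional laws, proving uniqueness. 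The marginal descriptions fall out by specialization: taking $h\equiv 1$ on a neighborhood of $\norm{\nu}z$ kills the Lévy-type terms and leaves the $(\Lambda+\sigma^2\delta_0)$-coalescent generator acting on $H_\pi(\nu)$; taking $\phi\equiv 1$ makes $H_\pi(\nu)\equiv 1$ and reduces \eqref{eq:jointGenerator} to the Lévy-Khintchine generator with triplet $(\driftF+\drift,\sigma,\LevyMeasure)$ applied to $h(\norm{\nu}z)$. The main obstacle I anticipate is the simultaneous passage to the limit from finite to general $\Lambda$ at the level of both the constructed process and the martingale problem while preserving the duality identity; this is the mirror image of the limiting procedure used to produce $\stp{\nu_t}$ in section \ref{sec:nucons}, and the duality allows convergence to be transferred between the two sides.
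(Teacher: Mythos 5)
Your proposal is correct in substance and follows the same overall architecture as the paper: a Poissonian construction when $\zeta^{-2}\Lambda(d\zeta)$ is finite, truncation and a limit for general $\Lambda$, identification of the limit through convergence of the generators, and uniqueness extracted from the duality with $\stp{\nu_t}$. The one genuinely different step is the limiting mechanism. You propose tightness of the truncated processes followed by a weak limit (mirroring the construction of $\stp{\nu_t}$ in section \ref{sec:nucons}), whereas the paper builds all the approximations $\stp{\Pi^{(n)}_t,\xi^{(n)}_t}$ on a \emph{common} family of Poisson atoms and a common Kingman driver, and shows the sequence is Cauchy in $L^3$ for the uniform distance on $[0,T]$ (Lemma \ref{lem:cauchy}, via a negative-binomial bound on the label of the second block touched by the extra atoms). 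The coupling buys a direct, almost pathwise convergence with no tightness or subsequence-extraction argument, exploiting the built-in consistency of $\Lambda$-coalescents; your route is viable but would require you to actually control oscillations of the partition coordinate (compact containment on $\PS{\infty}$ is free, modulus of continuity is not), which is precisely the content that Lemma \ref{lem:cauchy} supplies in stronger form. After the limit, both arguments invoke the same ingredients: Corollary \ref{cor:convergenceDualGenerators} together with Lemma IV.5.1 of Ethier--Kurtz to identify the limit as a solution of the martingale problem, and the duality of Theorem \ref{th:duality} for uniqueness.

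Two smaller remarks. First, in your uniqueness step you lean on ``uniqueness of $\stp{\nu_t}$ from Theorem \ref{th:MVPsmhExist}''; since that uniqueness is itself proved via the duality, which presupposes existence of the dual, the clean logical order (which the paper states explicitly before the proof of Theorem \ref{th:duality}) is: establish \emph{existence} of both processes by independent constructions, then the duality identity holds for any pair of solutions, and then uniqueness of each side follows because the other side exists and the function class is separating (Proposition IV.4.7 in Ethier--Kurtz). Only existence of $\stp{\nu_t}$ is needed to pin down the marginals of $\stp{\Pi_t,Z_t}$. Second, your parenthetical claim that the family $\{G_{p,\phi,h}(\nu,\cdot,\cdot)\}$ is measure-determining on $\PS{\infty}\times\Rp$ is a genuine lemma requiring a nontrivial argument with indicator-type test functions and bounded pointwise approximation; this is Lemma \ref{le:dualityDomainSeparates}\ref{itTh:D_Gdallseparates} in the paper and should not be waved through.
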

\par
As mentioned before, a key element in the proofs of Theorems \ref{th:MVPsmhExist} and \ref{th:FellerDual} is the following duality relation, which is also of interest in itself. 
\begin{theorem}[Duality]\label{th:duality}
	For $G_{p,\phi,h}\in D_{\GHall}$ we have
	\begin{equation}\label{eq:duality}
		\E_{\nu}\left[G_{p,\phi,h}\left(\nu_t,\tilde \pi,z\right)\right]
		=\tilde\E_{(\tilde\pi,z)}\left[G_{p,\phi,h}\left(\nu,\Pi_t,Z_t\right)\right]
	\end{equation}
	whenever $\stp{\nu_t}$ is a solution to the martingale problem for $(\Gall,D_\Gall')$ under $\P$, and
	$\stp{\Pi_t,Z_t}$ is a solution to the martingale problem for $(\Gdall,D_\Gdall)$ under $\tilde \P$.
	 Furthermore, the solutions to the martingale problems for $(\Gall,D_\Gall')$ and $(\Gdall,D_\Gdall)$ are unique.
\end{theorem}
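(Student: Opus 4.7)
My plan is to adapt the standard Ethier--Kurtz duality machinery (cf.\ Theorem~4.4.11 and Corollary~4.4.13 of \cite{EthierKurtz86}) to the present infinite-dimensional setting. The engine of the argument is the operator-level identity
$$
\Gall G(\cdot,\tilde\pi,z)(\nu) \;=\; \GHall G(\nu,\tilde\pi,z) \;=\; \Gdall G(\nu,\cdot,\cdot)(\tilde\pi,z),
$$
valid for every $G=G_{p,\phi,h}\in D_{\GHall}$ and every $(\nu,\tilde\pi,z)$, which is the content of the referenced Lemma~\ref{le:operatorDuality}. Its verification is a term-by-term bookkeeping. The drift $\Gdrift{\drift}$ acting on $h(\norm{\nu}z)H_\pi(\nu)$ via the chain rule on $\norm{\nu}$ matches the Lévy drift of $\log Z_t$. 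The diffusion $\Gdiff{\sigma}$ splits, after computing $F''(\nu;a,a)$ and integrating against $\nu$, into a mass-diffusion piece matching $\tfrac{\sigma^2}{2}\norm{\nu}^2z^2h''(\norm{\nu}z)H_\pi$ and a polynomial piece matching the Kingman pair-merger $\sigma^2 h(\norm{\nu}z)\sum_{|J|=2}\bigl(H_{\pi^{(J)}}-H_\pi\bigr)$. The jump $\Gjump{\Lambda}$, after binomial expansion of $\bigl(\nu+\norm{\nu}\tfrac{\zeta}{1-\zeta}\delta_a\bigr)^{\otimes p}$ and averaging over $a$ with respect to $\nu/\norm{\nu}$, produces both the multiple-merger weights $\zeta^\ell(1-\zeta)^{p-\ell}$ on the coalescent side and the rescaling $\norm{\nu}\mapsto\norm{\nu}/(1-\zeta)$ inside $h$, i.e.\ a Lévy jump of size $-\log(1-\zeta)$ for $\log Z_t$; the compensators match by construction.

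With the operator identity in hand, I would prove \eqref{eq:duality} by the standard sandwich. On a product space, couple $\stp{\nu_t}$ (solving the $(\Gall,D_\Gall')$-martingale problem from $\nu$ under $\P$) with an independent $\stp{\Pi_t,Z_t}$ (solving the $(\Gdall,D_\Gdall)$-martingale problem from $(\tilde\pi,z)$ under $\tilde\P$). Fix $t>0$ and define
$$
\psi(s)\coloneqq (\E_\nu\otimes\tilde\E_{(\tilde\pi,z)})\bigl[G(\nu_s,\Pi_{t-s},Z_{t-s})\bigr], \qquad s\in[0,t].
$$
Since $D_\Gall'\subset D_\Gall$ and $G(\cdot,\tilde\pi,z)\in D_\Gall'$ while $G(\nu,\cdot,\cdot)\in D_\Gdall$, both processes furnish the relevant martingales; combining these with the operator identity via Fubini shows that $\psi$ is absolutely continuous with $\psi'\equiv 0$ on $(0,t)$, and comparing $\psi(0)$ with $\psi(t)$ yields \eqref{eq:duality}. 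The main obstacle is integrability, because $\stp{\norm{\nu_t}}$ is the exponential of a Lévy process with no a priori moment bounds and because $\Gdiff{\sigma}$ itself carries an unbounded $\norm{\nu}$ factor. This is precisely why $D_{\GHall}$ was built out of compactly supported $h\in D_h$: it renders $G_{p,\phi,h}$ uniformly bounded on $\sM(\typeS)\times\PS{\infty}\times\Rp$ and, by Lemma~\ref{le:dualGeneratorBound}, renders $\GHall G_{p,\phi,h}$ uniformly bounded as well. These uniform bounds legitimize Fubini, differentiation under the expectation, and optional sampling at $s=t$.

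For the uniqueness assertions I would invoke a Corollary-4.4.13-style argument. The explicit coupled construction of $\stp{\Pi_t,Z_t}$ in section~\ref{sec:dualConstruction}, via a common Poisson point process driving both the $(\Lambda+\sigma^2\delta_0)$-coalescent coagulations and the $\log Z_t$ jumps, together with the regularity of $\Gdall$ on $D_\Gdall$ from section~\ref{sec:technicalAndGenerators}, pins down a unique law for the $(\Gdall,D_\Gdall)$-martingale problem. Feeding this uniqueness into \eqref{eq:duality} determines $\E_\nu[G_{p,\phi,h}(\nu_t,\tilde\pi,z)]$ for every admissible choice of parameters. As $\phi$ ranges over $\cbf(\typeS^p)$ for $p\geq 1$ the polynomials $H_\pi$ separate elements of $\sPM(\typeS)$, and multiplying by $h(\norm{\nu}z)$ with $h\in D_h$ and $z>0$ arbitrary recovers the total-mass coordinate; hence $\{G_{p,\phi,h}(\cdot,\tilde\pi,z)\}$ is a measure-determining class on $\sM(\typeS)$. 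Thus the one-dimensional marginals of any solution to the $(\Gall,D_\Gall')$-martingale problem are pinned down, and the strong Markov property promotes this to uniqueness of all finite-dimensional distributions.
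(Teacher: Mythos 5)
Your proof of the identity \eqref{eq:duality} itself follows essentially the same route as the paper: the operator-level duality of Lemma \ref{le:operatorDuality}, combined with the uniform boundedness of $G_{p,\phi,h}$, $\Gall G_{p,\phi,h}$ and $\Gdall G_{p,\phi,h}$ (Lemma \ref{le:dualGeneratorBound}), feeds into the Ethier--Kurtz duality theorem; your ``sandwich'' function $\psi(s)=\E\otimes\tilde\E[G(\nu_s,\Pi_{t-s},Z_{t-s})]$ is just that theorem unpacked, and your identification of the compactly supported $h\in D_h$ as the device that makes the integrability hypotheses hold is exactly right.

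The gap is in the uniqueness part, specifically for the dual martingale problem. You assert that the explicit Poissonian construction of $\stp{\Pi_t,Z_t}$ in section \ref{sec:dualConstruction}, together with regularity of $\Gdall$, ``pins down a unique law for the $(\Gdall,D_\Gdall)$-martingale problem.'' An explicit construction only produces \emph{one} solution; it does not by itself exclude other solutions to the same martingale problem, and no amount of regularity of the generator supplies that for free. The paper instead treats the two uniqueness claims symmetrically: once \eqref{eq:duality} is known to hold between \emph{any} solution of $(\Gall,D_\Gall')$ and \emph{any} solution of $(\Gdall,D_\Gdall)$, and once \emph{existence} of at least one solution on each side is established independently (sections \ref{sec:nucons} and \ref{sec:dualConstruction}), Proposition IV.4.7 of \cite{EthierKurtz86} yields uniqueness of one-dimensional marginals on \emph{both} sides simultaneously, provided the relevant function classes are separating on $\sPM(\sM(\typeS))$ \emph{and} on $\sPM(\PS{\infty}\times\Rp)$. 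That second separating property is the content of Lemma \ref{le:dualityDomainSeparates}\,\ref{itTh:D_Gdallseparates}, and it is nontrivial: one must approximate indicator-type functions $\Ind{\restr{\tilde\pi}{p}=\hat\pi}$ by functions of the form $\mint{\phi_\pi}{\rho^{\otimes p}}$ with $\phi$ continuous, and check closure under multiplication to invoke Stone--Weierstrass. You carry out the separating-class argument on the measure-valued side but omit it entirely on the coalescent side, which is precisely where your shortcut via ``uniqueness from construction'' would need to be replaced. Repair: drop the claim that the construction gives uniqueness, prove the dual-side separating property, and then run the Proposition IV.4.7 argument in both directions at once.
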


\begin{remark}
	Theorem \ref{th:duality} is an extension of the classical duality relation between $\Lambda$-Fleming-Viot processes and $\Lambda$-coalescents in 
	\cite{BertoinLeGall2003}, to the case of populations with varying size. Indeed, one recovers the duality in \cite{BertoinLeGall2003}
	by setting $h\equiv1$ in \eqref{eq:duality}. 
\end{remark}

\subsection{Lamperti Transformation}\label{sec:MVLampertiTransformation}
Having constructed the SMH process $\stp{\nu_t}$, our main result of this section is the following 
theorem which, on the one hand, generalizes Proposition 2 in \cite{KyprianouPardo2008} to $\beta$-stable 
measure-valued processes $(\beta>1)$
and, on the other, re-frames Theorem 1 in \cite{les7} as a Lamperti transformation 
in the case $\beta\in(1,2]$. The latter comprise the intersection between branching and self-similar processes {of index $\alpha=\beta-1$} (see Remark \ref{rem:alfaStableCase} below). Finally, the following theorem also characterizes the process of frequency of types of a population whose total size evolves as a positive self-similar Markov process  with non-negative jumps, through the well-known
duality relationship between $\Lambda$-Fleming-Viot processes and $\Lambda$-coalescents \cite{BertoinLeGall2003} (see Lemma \ref{th:FVCoaDuality} and Remark \ref{rem:PSSMPgenerator}). This is a first step towards characterizing their genealogies via path-wise duality relations based on lookdown constructions \cite{DonnellyKurtz1999}  (see e.g. \cite{les7} for such a characterization for $\beta$-stable branching processes).\par

\begin{theorem}[Lamperti Transformation]\label{th:mvpLamperti}
	Let $\stp{\nu_t}$ be the Feller SMH process of Theorem \ref{th:MVPsmhExist}.
	\begin{enumerate}   
		\item \label{itTh:mvpsmhtoss} Let $\alpha\geq 0$ and recall the random time change $\gamma_\alpha(t)$ 
		of Theorem \ref{th:smhssbijection}. The time-changed process 
		$\stp{\mu_t}= \stp{\nu_{\gamma_\alpha(t)}}$ is the unique solution to the following integral equation:
		for
		$$
		S_\mu \coloneqq   \sup\left\{\int_0^t \norm{\mu_u}^{-\alpha} du \colon \int_0^t \norm{\mu_u}^{-\alpha} du <\infty \right\},
		$$
		we have
		{
			\begin{equation}\label{eq:SSSMH-equation}
				\stp{\mu_t} = \stp{\nu_{\int_0^t \norm{\mu_s}^{-\alpha}ds\wedge S_\mu}}.
			\end{equation}
		}
		Furthermore, it is a $\alpha$-SS standard Markov process with generator of the form  $\Fall_{\alpha}$ in  \eqref{eq:generatormvssMp}
		on the set 
		$$
		\{F\in D_\Gall\colon \norm{\cdot}^{-\alpha}\Gall{F}(\cdot) \in\bbf(\mathcal M(\typeS))
		\}\subset \D(\Fall_\alpha).
		$$
	\item \label{itTh:mvpsstomap} Recall the time change $c_\alpha(t)$ of Theorem \ref{th:smhssbijection}.
	If $\stp{\mu_t}$ is the (unique) solution to \eqref{eq:SSSMH-equation}, then 
	$
	\stp{\nu_t}=\stp{\mu_{c_\alpha(t)}}.
	$
	Furthermore, $\stp{\nu_t}$ is the unique
	solution to 
	$$
	\stp{\nu_t}=\stp{\mu_{\int_0^t \norm{\nu_s}^\alpha ds\wedge S_\nu}},
	$$
	where 
	$$
	S_\nu\coloneqq \sup\left\{\int_0^t \norm{\nu_u}^\alpha du \colon \int_0^t \norm{\nu_u}^\alpha du <\infty \right\}.
	$$
	
	
\end{enumerate}
\end{theorem}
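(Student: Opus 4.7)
The strategy is to apply the abstract Lamperti bijection (Theorem \ref{th:smhssbijection}) and Corollary \ref{Th:smhtossTimeChangeBijection} directly to the SMH Feller process $\stp{\nu_t}$ constructed in Theorem \ref{th:MVPsmhExist}, treating the present statement essentially as a concrete instantiation of the general infinite-dimensional Lamperti transformation developed in section \ref{sec:BanachLamperti}.

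For \ref{itTh:mvpsmhtoss}, I would first invoke Theorem \ref{th:smhssbijection} \ref{thIt:mhtoss} with $E=\sM(\typeS)$: since $\stp{\nu_t}$ is a standard SMH Markov process, the time-changed process $\stp{\mu_t}=\stp{\nu_{\gamma_\alpha(t)}}$ is a standard $\alpha$-SS Markov process with the stated lifetime, and by \ref{thIt:mhtossEq} it is the unique solution to \eqref{eq:SSSMH-equation}, with $S_\mu\aseq \abstime^\nu$. To identify the generator, I would use the standard time-change of martingales: for $F\in D_\Gall$, Theorem \ref{th:MVPsmhExist} yields that
$$M^F_t = F(\nu_t)-\int_0^t \Gall F(\nu_s)\,ds$$
is a martingale. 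Since $\gamma_\alpha$ is absolutely continuous on $[0,\abstime^\mu)$ with derivative $\norm{\nu_{\gamma_\alpha(\cdot)}}^{-\alpha}=\norm{\mu_\cdot}^{-\alpha}$, the change of variables $s=\gamma_\alpha(u)$ gives
$$M^F_{\gamma_\alpha(t)} = F(\mu_t)-\int_0^t \norm{\mu_u}^{-\alpha}\Gall F(\mu_u)\,du.$$
Restricting to $F$ in the subdomain where $\norm{\cdot}^{-\alpha}\Gall F(\cdot)$ is bounded and applying optional sampling with respect to the time-changed filtration, analogously to the argument used in Proposition \ref{prop:DWselfsimilar}, yields that this is a martingale; this shows $\Fall_\alpha F = \norm{\cdot}^{-\alpha}\Gall F$ on the stated set, and the explicit decomposition \eqref{eq:generatormvssMp} follows from the decomposition $\Gall=\Gdrift{\drift}+\Gdiff{\sigma}+\Gjump{\Lambda}$ of Theorem \ref{th:MVPsmhExist}.

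For \ref{itTh:mvpsstomap}, the identity $\stp{\nu_t}=\stp{\mu_{c_\alpha(t)}}$ is an immediate application of Corollary \ref{Th:smhtossTimeChangeBijection}, since $\stp{\mu_t}$ was obtained from $\stp{\nu_t}$ by the $\gamma_\alpha$ time-change. The claim that $\stp{\nu_t}$ is the unique solution to the corresponding integral equation, with the right identification of $S_\nu$, is then just Theorem \ref{th:smhssbijection} \ref{thIt:sstomhEq} applied to the $\alpha$-SS process $\stp{\mu_t}$.

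The main obstacle is the careful handling of the optional sampling step in the identification of the generator, particularly near the boundary of the state space: when $\alpha>0$ and $\norm{\mu_t}\to 0$, or when $\alpha<0$ and $\norm{\mu_t}\to \infty$, the factor $\norm{\mu_t}^{-\alpha}$ is singular, so one must control $\int_0^t\norm{\mu_u}^{-\alpha}\,\Gall F(\mu_u)\,du$ uniformly up to $t<\abstime^\mu$. The domain restriction $\norm{\cdot}^{-\alpha}\Gall F(\cdot)\in \bbf(\sM(\typeS))$ is precisely what sidesteps this, ensuring $\Fall_\alpha F$ remains bounded and the martingale argument closes as in the proof of Proposition \ref{prop:DWselfsimilar}. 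A secondary but routine technical point is the verification that the time-changed filtration coincides with the natural filtration of $\stp{\mu_t}$, which follows from the continuity of $\gamma_\alpha$ on $[0,\abstime^A)$ together with Lemma \ref{le:TimeChangeDMaeasurable}.
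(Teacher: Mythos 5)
Your proposal is correct and follows essentially the same route as the paper: both parts are reduced to Theorem \ref{th:smhssbijection} and Corollary \ref{Th:smhtossTimeChangeBijection}, and the generator is identified via the random time change of the martingale problem (the paper simply cites Theorem VI.1.3 in Ethier--Kurtz where you unpack the change-of-variables and optional sampling argument by hand, noting as the paper does that the domain restriction $\norm{\cdot}^{-\alpha}\Gall F(\cdot)\in\bbf(\sM(\typeS))$ together with the a.s. local boundedness of $\norm{\nu_t}^{-\alpha}$ makes the argument close).
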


\begin{proof}
Item $\ref{itTh:mvpsmhtoss}$ follows from first applying Theorem \ref{th:smhssbijection}
to obtain that $\stp{\mu_t}$ is a standard $\alpha$-SS Markov process, and also that it is the unique solution 
to \eqref{eq:SSSMH-equation}.
Now, observe that $\stp{\norm{\nu_t}^{-\alpha}}$ is a.s. bounded on bounded
time intervals since $\stp{\xi_t}$ is a.s. bounded away from $-\infty$ on bounded
time intervals. The latter also implies that
$\inf\{t\colon \norm{\nu_t}^{-\alpha}=0\}\aseq\infty=\gamma_\alpha(\infty)$.
Then 
by Theorem VI.1.3 in \cite{EthierKurtz86}, the time-changed process
$\stp{\mu_t}$  is a solution to the martingale problem for \eqref{eq:generatormvssMp} 
on the set 
$$
\{F\in D_\Gall\colon \norm{\cdot}^{-\alpha}\Gall{F}(\cdot) \in\bbf(\mathcal M(\typeS))
\}\subset \D(\Fall_\alpha).
$$
Item $\ref{itTh:mvpsstomap}$ is a direct consequence of Theorem \ref{th:smhssbijection}$ \ref{thIt:mhtossEq}$ and Corollary \ref{Th:smhtossTimeChangeBijection}. 
\end{proof}

\begin{remark}\label{rem:PSSMPgenerator}
As in section \ref{sec:DWP}, when plugging functions of the form $F(\mu)=f(\mint{1}{\mu})$, where $f(x),xf'(x),x^2f''(x)\in\czf(R)$, into  \eqref{eq:generatormvssMp}, we obtain
that the total mass process  $\stp{\norm{\mu_t}}$ is a positive self-similar Markov process with generator of the form
\begin{align}\label{eq:PSSMPGenerator}
	\PSSMPG f(x) =& 
	{(\driftF+\drift)} x^{1-\alpha} f'(x)+ \frac{\sigma^2}{2}x^{2-\alpha}f''(x)  \nonumber\\
	+&x^{-\alpha}\int_1^\infty [f(x\zeta)-f(x) - f'(x)\log(\zeta)\Ind{\zeta<1/2}] \Theta(d\zeta).
\end{align}
Here we recall $\driftF=-\sigma$. Also $\Theta(d\zeta)$ is the pushforward of the measure $\zeta^{-2}{\Lambda(d\zeta)}$
under the transformation $\zeta\to {1}/({1-\zeta})$ 
(c.f. Theorem 6.1 in \cite{Lamperti1972}).\par 
\end{remark}
\begin{remark}\label{rem:alfaStableCase}
Taking $\sigma=0$, and $\Lambda(d\zeta)=c{\zeta^{1-\beta}(1-\zeta)^{\beta-1}}d\zeta$ with $c>0$ and $\beta\in(1,2)$;
and also
\begin{align*}\drift &= -\int_0^1\frac{\Lambda(d\zeta)}{\zeta^2}\lbr  \frac{\zeta}{1-\zeta} - (\abs{\log(1-\zeta)}\Ind{\zeta<1/2}) \rbr,
\end{align*}
in \eqref{eq:generatormvssMp}, we obtain 
\begin{align*}
{\Fall}(\nu)
=\frac{1}{\norm{\nu}^{\beta-1}}\int_{\typeS} \frac{\nu(da)}{\norm{\nu}} \int_0^1\frac{\Lambda(d\zeta)}{\zeta^2} \bigg\{ &F\left(\nu + \norm{\nu}\frac{\zeta}{1-\zeta} \delta_a\right)
- F(\nu)\\
&- \norm{\nu} \frac{\zeta}{1-\zeta} F'(\nu; a)\bigg\}.
\end{align*}
The latter, after the change of variable $h= \norm{\nu} \frac{\zeta}{1-\zeta}$, becomes
$$
{\Fall}(\nu) = c\int_\typeS\nu(da)\int_0^\infty h^{-1-\beta}dh \bigg\{ F\left(\nu + h \delta_a\right)
- F(\nu) -  h F'(\nu; a)\bigg\},
$$
the generator of a $\beta$-stable measure-valued branching process with $\beta>1$. The case
$\beta=2$ is covered by picking $\Lambda(d\zeta)=c\delta_0(d\zeta)$ instead. Thus, Theorem \ref{th:mvpLamperti} \ref{itTh:mvpsmhtoss} generalizes Proposition 2 \cite{KyprianouPardo2008} to the measure-valued setting, while Theorem \ref{th:mvpLamperti} \ref{itTh:mvpsstomap} recovers Theorem 1 i) and ii) (for $\beta>1$) in \cite{les7}.\par
\end{remark}

\section{Technical lemmas}\label{sec:technicalAndGenerators}
\subsection{Maximal inequality for exponential of Lévy processes}
The following lemma will be used to show that the process $\stp{\nu_t}$ is tight. 
\begin{lemma}\label{le:LevyExponentialMoments}
    Let $\stp{\xi_t}$ be a Lévy process with characteristic exponent of the form \eqref{eq:LevyCharacteristic}.
    Let $q\ge 1$ 
    and assume  that $e^{\xi_0}$
    is $\Lp{q}$-bounded. Then, 
    \begin{align*}
    \E\left[\sup_{0\leq t\leq T}e^{q \xi_t}\right] & 
    < C_q\exp\left\{ TC'_q\left( \abs{\drift+\driftF} +\sigma^2+ \norm{\Lambda}+  \int_{1/2}^1 \frac{\Lambda(d\zeta)}{\zeta^2} \left\{ e^{-q\log(1-\zeta)}-1\right\}  \right) \right\},
    \end{align*}
    for some constants $C_q>0$ and $C'_q>0$.
\end{lemma}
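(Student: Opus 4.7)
\emph{Proof plan.} If $\lambda_q := \int_{1/2}^{1}\zeta^{-2}\{e^{-q\log(1-\zeta)}-1\}\Lambda(d\zeta)=\infty$ the inequality is trivial, so assume henceforth $\lambda_q<\infty$. The plan is to split $\xi$ via its L\'evy--It\^o decomposition as $\xi_t = \xi_0 + \eta_t + J_t$, where $\eta$ gathers the linear drift, the Brownian component, and the compensated jumps of size at most $\log 2$ (i.e.\ those coming from $\zeta\le 1/2$), while $J$ is the compound Poisson process of the large jumps (from $\zeta\in(1/2,1)$). The three summands are mutually independent, and since all the jumps of $\xi$ are non-negative (they equal $-\log(1-\zeta)>0$), $J$ is non-decreasing.

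From the elementary bound $\sup_s(abc)\le(\sup a)(\sup b)(\sup c)$ for positive processes and from independence,
$$
\E\bigl[\sup_{s\le T}e^{q\xi_s}\bigr]\le \E\bigl[e^{q\xi_0}\bigr]\cdot \E\bigl[\sup_{s\le T}e^{q\eta_s}\bigr]\cdot \E\bigl[\sup_{s\le T}e^{qJ_s}\bigr].
$$
The first factor is controlled by hypothesis; the third equals $\E[e^{qJ_T}]=\exp(T\lambda_q)$ by monotonicity of $J$ and the L\'evy--Khintchine formula for a compound Poisson process.

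The middle factor is handled by Doob. Since $\eta$ has jumps bounded by $\log 2$, its Laplace exponent $\psi_\eta(p)$ is finite for every $p\in\R$, so $M_t := e^{q\eta_t-t\psi_\eta(q)}$ is a non-negative martingale. Doob's $L^2$ inequality yields $\E[\sup_{s\le T}M_s]\le 2\,\E[M_T^2]^{1/2}=2\exp(T(\psi_\eta(2q)-2\psi_\eta(q))/2)$. From the crude bound $\sup_s e^{q\eta_s}\le e^{T|\psi_\eta(q)|}\sup_s M_s$ (valid regardless of the sign of $\psi_\eta(q)$) one obtains that the middle factor is at most $2\exp(T C_q(|\psi_\eta(q)|+|\psi_\eta(2q)|))$ for a universal constant.

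It remains to estimate $|\psi_\eta(p)|$ for $p\in\{q,2q\}$. The drift and Brownian contributions are trivially $\le p|\drift+\driftF|+p^2\sigma^2/2$. For the compensated small-jump integral, a Taylor expansion at $\zeta=0$ shows that $(1-\zeta)^{-p}-1-p|\log(1-\zeta)|=\bO(\zeta^2)$ as $\zeta\to 0$ and is uniformly bounded on $(0,1/2]$, so after dividing by $\zeta^2$ and integrating against $\Lambda$ one obtains at most $C_p\norm{\Lambda}$. Combining these estimates with the bounds above and absorbing $\E[e^{q\xi_0}]$ into $C_q$ gives the claim. The main annoyance is the possibly negative sign of $\psi_\eta(q)$, which forces one to pass to $|\psi_\eta(q)|$ and multiply constants accordingly; once this is done, everything fits the required form $C_q\exp(TC'_q(|\drift+\driftF|+\sigma^2+\norm{\Lambda}+\lambda_q))$.
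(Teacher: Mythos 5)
Your proof is correct and follows essentially the same route as the paper: the same Lévy–Itô decomposition into drift, Brownian/compensated small jumps, and the large-jump compound Poisson part, followed by independence factorization, Campbell's formula for the jump contributions, Doob's maximal inequality for the martingale piece, and the $\bO(\zeta^2)$ Taylor estimate for the compensated integrand. The only (harmless) variation is that you normalize to the exponential martingale $e^{q\eta_t-t\psi_\eta(q)}$ and use Doob's $L^2$ inequality, whereas the paper applies Doob's $L^q$ inequality directly to the submartingale $e^{qM_t}$.
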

\begin{proof}
    Let
$\stp{M_t}$ be the  
 martingale
 component of $\stp{\xi_t}$, given by 
$$
M_t=\sigma B_t + \int_0^t\int_{0<\zeta\le 1/2} \abs{\log(1-\zeta)} \tilde\PPP(ds,d\zeta),
$$ 
where $\stp{B_t}$ is a Brownian motion and
  $\tilde\PPP$ is a compensated Poisson random measure on $\Rp\times(0,1/2)$ of intensity $ds\times\zeta^{-2}\Lambda(d\zeta)$. 
  Let also $\stp{J_t}$ be the pure-jump component of $\stp{\xi_t}$ given by
  $$
  J_t=\int_0^t\int_{1/2\leq \zeta<1} \abs{\log(1-\zeta)} \PPP(ds,d\zeta)
  $$
  where $\PPP$ is a Poisson random measure on $\Rp\times[1/2,1)$ of intensity 
  $ds\times\zeta^{-2}\Lambda(d\zeta)$.
  Then
  \begin{align}\label{eq:normExpectationBound0}
  \E\left[\sup_{0\leq t\leq T}e^{q\xi_t}\right]
  &\leq e^{\abs{\drift+\driftF}T}\E[e^{q\xi_0}]\E[e^{qJ_T}]\E\left[\sup_{0\leq t\leq T}e^{qM_t}\right].
  \end{align}
  The expectation $\E[e^{q\xi_0}]$ is bounded by hypothesis. 
  Also, by Campbell's formula  we have
  $$
    \log(\E[e^{qJ_T}]) = T\int_{1/2}^{1}\frac{\Lambda(d\zeta)}{\zeta^2} \lbr e^{-q\log(1-\zeta)}-1 \rbr.
  $$
  We now bound the last expectation in the r.h.s. of \eqref{eq:normExpectationBound0}
By Doob's $\Lp{q}$-maximal 
inequality applied to the submartingale $\stp{e^{qM_t}}$  we have for some constant $C_q$, and plugging Campbell's formula in the second inequality below,
\begin{align}\label{eq:normExpectationBound}
&\E\left[\sup_{0\leq t\leq T} e^{qM_t}\right]\leq C_q\E\left[e^{qM_T}\right]\\
&\leq  C_q e^{Tq^2	\frac{\sigma^2}2} \exp\lbr T\int_{0}^{1/2}\frac{\Lambda(d\zeta)}{\zeta^2} \lbr e^{-q\log(1-\zeta)}-1+q\log(1-\zeta) \rbr \rbr.\nonumber
\end{align}
Since the integrand above is of order $\bO(\zeta^2)$ as $\zeta\to0$, we have
$$
\int_{0}^{1/2}\frac{\Lambda(d\zeta)}{\zeta^2} \lbr e^{-q\log(1-\zeta)}-1+q\log(1-\zeta) \rbr <C\norm{\Lambda}<\infty,
$$
for some $C>0$. Putting all the bounds together we obtain the result.
\end{proof}
\subsection{Regularity results for generators}
The following lemma is used in section \ref{sec:nucons} to prove that the weak limit of solutions to the martingale problems for $(\Gall_n, D_\Gall)$ defined as in \eqref{eq:generatormvMp}
with corresponding jump measure $\Lambda_n$, is a solution to the martingale problem for $(\Gall, D_\Gall)$ with jump measure $\Lambda=\lim_{n\to\infty}\Lambda_n$.
\begin{lemma}\label{le:boundOnGLambda}
    For $F\in D_\Gall$ and finite measures $\Lambda,\Lambda'$ on $(0,1)$ we have
    \begin{equation}\label{eq:GjumpBound}
    \norm{\Gjump{\Lambda} F - \Gjump{\Lambda'} F}_\infty \leq C \norm{\Lambda-\Lambda'}_{TV}
    \end{equation}
    for some $C>0$, where $\norm{\Lambda-\Lambda'}_{TV}$ is the total variation between $\Lambda$ and $\Lambda'$.
    In particular, for the corresponding opeartors $\Gall=\Gdrift{\drift} + \Gdiff{\sigma} + \Gjump{\Lambda}$ and 
    $\Gall'=\Gdrift{\drift} + \Gdiff{\sigma} + \Gjump{\Lambda'}$ we have
    $$
    \norm{\Gall F - \Gall' F}_\infty \leq C \norm{\Lambda-\Lambda'}_{TV}.
    $$
    \par
    Also, for any finite measure $\Lambda$ on $(0,1)$, and for the corresponding operator $\Gall$, we have for any $ F\in D_\Gall$,
    \begin{equation}\label{eq:GallBound}
     \norm{\Gall F}_\infty \leq C\norm{\Lambda}_{TV}<\infty. 
    \end{equation}
\end{lemma}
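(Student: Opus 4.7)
The plan is to show that, for every $F\in D_\Gall$, the integrand
$$
\Phi_F(\mu,a,\zeta)\;\coloneqq\;\frac{1}{\zeta^{2}}\Bigl\{F\bigl(\mu+\norm{\mu}\tfrac{\zeta}{1-\zeta}\delta_a\bigr)-F(\mu)-\norm{\mu}\,|\log(1-\zeta)|\,\Ind{\zeta<1/2}\,F'(\mu;a)\Bigr\}
$$
appearing in $\Gjump{\Lambda}$ is bounded \emph{uniformly} in $(\mu,a,\zeta)\in\sM(\typeS)\times\typeS\times(0,1)$ by some constant depending only on the $D_\Gall$-bound of $F$. Granting this, the first inequality \eqref{eq:GjumpBound} follows immediately: since $\int_\typeS \mu(da)/\norm{\mu}=1$, one obtains
$$
\abs{\Gjump{\Lambda}F(\mu)-\Gjump{\Lambda'}F(\mu)}\;\le\;\sup_{\mu,a,\zeta}\abs{\Phi_F(\mu,a,\zeta)}\;\norm{\Lambda-\Lambda'}_{TV},
$$
where we used that on $(0,1)$ the measures $\zeta^{-2}|\Lambda-\Lambda'|(d\zeta)$ and $|\Lambda-\Lambda'|(d\zeta)$ interact only through the factor $\zeta^2$ already absorbed into $\Phi_F$.

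To prove the uniform boundedness of $\Phi_F$, I split into the regions $\zeta\ge 1/2$ and $\zeta<1/2$. On $\{\zeta\ge 1/2\}$ the compensator vanishes, and the definition of $D_\Gall$ gives $\norm{F}_\infty\le C$, yielding $|\Phi_F|\le 2C/\zeta^{2}\le 8C$. On $\{\zeta<1/2\}$, I write $h=\norm{\mu}\zeta/(1-\zeta)$ and split
$$
\Phi_F(\mu,a,\zeta)=\frac{1}{\zeta^{2}}\Bigl[F(\mu+h\delta_a)-F(\mu)-h F'(\mu;a)\Bigr]+\frac{\norm{\mu}F'(\mu;a)}{\zeta^{2}}\Bigl[\tfrac{\zeta}{1-\zeta}-|\log(1-\zeta)|\Bigr].
$$
A second-order directional Taylor expansion of $F$ in the direction $\delta_a$ (valid because $F\in \cdf{2}(\sM(\typeS))$) gives
$$
F(\mu+h\delta_a)-F(\mu)-hF'(\mu;a)=\int_0^{h}(h-s)F''(\mu+s\delta_a;a,a)\,ds.
$$
Using the $D_\Gall$ bound $\norm{\nu}^{2}|F''(\nu;a,a)|\le C$ together with $\norm{\mu+s\delta_a}\ge \norm{\mu}$ for $s\ge 0$, this integral is bounded by $\tfrac12 h^{2}C/\norm{\mu}^{2}=\tfrac{C}{2}\zeta^{2}(1-\zeta)^{-2}$. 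Similarly, $\norm{\mu}|F'(\mu;a)|\le C$ and the elementary expansions $\tfrac{\zeta}{1-\zeta}=\zeta+\zeta^{2}+O(\zeta^{3})$, $|\log(1-\zeta)|=\zeta+\zeta^{2}/2+O(\zeta^{3})$ show that the second bracket is $O(\zeta^{2})$ on $(0,1/2)$. Combining, $|\Phi_F|$ is uniformly bounded on $\{\zeta<1/2\}$, which together with the bound on $\{\zeta\ge 1/2\}$ finishes this step.

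The main (and only slightly delicate) step is exactly this uniform bound on $\Phi_F$, where one must be careful about the denominator $\norm{\mu}^{2}$ appearing in the $F''$-bound; it is what makes the second derivative estimate compatible with the factor $h^{2}=\norm{\mu}^{2}\zeta^{2}(1-\zeta)^{-2}$ and cancels precisely to produce a bound independent of $\mu$. Once \eqref{eq:GjumpBound} is established, the statement about $\Gall-\Gall'$ is immediate since the drift and diffusion parts are identical in both operators. Finally, for \eqref{eq:GallBound}, I bound the three summands of $\Gall F$ separately: the $D_\Gall$-bounds yield $\norm{\Gdrift{\drift}F}_\infty\le |\drift|C$ (from $\norm{\mu}|F'(\mu;a)|\le C$ and $\int \mu(da)/\norm{\mu}=1$), $\norm{\Gdiff{\sigma}F}_\infty\le \tfrac{\sigma^{2}}{2}C$ (from $\norm{\mu}^{2}|F''(\mu;a,a)|\le C$), and $\norm{\Gjump{\Lambda}F}_\infty\le C\norm{\Lambda}_{TV}$ by taking $\Lambda'\equiv 0$ in \eqref{eq:GjumpBound}. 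Adjusting constants yields a bound of the claimed form.
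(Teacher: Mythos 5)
Your proposal is correct and follows essentially the same route as the paper: a uniform bound on the integrand of $\Gjump{\Lambda}$ divided by $\zeta^2$, obtained by splitting at $\zeta=1/2$, a second-order Taylor estimate controlled by the $D_\Gall$ bound $\norm{\nu}^2\abs{F''(\nu;a,a)}\leq C$ (the cancellation of $\norm{\mu}^2$ against $h^2$ that you highlight is exactly the point of the paper's computation as well), and the elementary $O(\zeta^2)$ comparison between $\zeta/(1-\zeta)$, $\zeta$, and $\abs{\log(1-\zeta)}$. The only cosmetic difference is that you Taylor-expand in the mass increment $h$ with integral remainder while the paper differentiates in $\zeta$ via the chain rule and expands in $\zeta$; both yield the same $C\zeta^2$ bound.
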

\begin{proof}
	In what follows $C$ will be a constant
	that may vary from line to line but that does not depend on $\nu$.
    On the one hand, whenever $F\in D_\Gall$ the diffusion and drift terms of $\Gall F$ are 
bounded
\begin{align}\label{eq:GdiffDriftBound}
\abs{\Gdiff{\sigma}F(\nu)}+\abs{\Gdrift{\drift}F(\nu)}&\leq \norm{\nu} \int_\typeS \nu(da)\frac{\sigma^2}{2} \abs{F''(\nu;a,a)}+\int_\typeS \nu(da)\kappa \abs{F'(\nu;a)}\nonumber\leq C.
\end{align}                   
Thus \eqref{eq:GallBound} easily follows from $\eqref{eq:GjumpBound}$.
Let us then prove \eqref{eq:GjumpBound}. 
Fix a measure $\nu\in\sM(\typeS)$ and regard 
$F(\nu + \norm{\nu}\frac{\zeta}{1-\zeta}\delta_a)$ as a function of $\zeta$. Then writing 
$\psi(\zeta)=\frac{\zeta}{1-\zeta}$ so that $\psi'(\zeta)=\frac{1}{(1-\zeta)^2}$ and $\psi''(\zeta)=\frac{2}{(1-\zeta)^3}$,
we have
\begin{align*}
&\der{\zeta}F(\nu+\norm{\nu}\psi(\zeta)\delta_a)\\
&= \lim_{h\to 0} \frac{F(\nu+\norm{\nu}\psi(\zeta)\delta_a + \norm{\nu}(\psi(\zeta+h)-\psi(\zeta))\delta_a) - 
                   F(\nu+\norm{\nu}\psi(\zeta)\delta_a)}
                       {\norm{\nu}(\psi(\zeta+h)-\psi(\zeta))} \\
&\times\lim_{h\to 0}\norm{\nu}\frac{\psi(\zeta+h)-\psi(\zeta)}{h}\\
&= \norm{\nu}F'(\nu+\norm{\nu}\psi(\zeta)\delta_a;a)\frac{1}{(1-\zeta)^2}.
\end{align*}
Similarly,
\begin{align*}
\der{\zeta^2}F(\nu+\norm{\nu}\psi(\zeta)\delta_a)
&= \norm{\nu}^2F''(\nu+\psi(\zeta)\delta_a;a,a)\left(\frac{1}{(1-\zeta)^2}\right)^2  \\
&+\quad\norm{\nu}F'(\nu+\psi(\zeta)\delta_a;a)\frac{2}{(1-\zeta)^3}.
\end{align*}
Using the assumption $F\in D_\Gall$, the latter is uniformly bounded on $\nu\in\sM(\typeS),a\in\typeS$, by
$$
\der{\zeta^2}F(\nu+\norm{\nu}\psi(\zeta)\delta_a) \leq C\left(\frac{1}{(1-\zeta)^4}+\frac{1}{(1-\zeta)^3}\right).
$$
Taylor's expansion for $\zeta$ near $0$ then gives
$$
\abs{F\left(\nu + \norm{\nu}\frac{\zeta}{1-\zeta}\delta_a\right) - F(\nu) - 
\norm{\nu}\zeta F'\left(\nu;a\right)}
\leq C\zeta^2\left(\frac{1}{(1-\zeta)^4}+\frac{1}{(1-\zeta)^3}\right).
$$
Note also that
\begin{equation}\label{eq:zetalogIneq}
\forall 0<\zeta<1,\quad \abs{\zeta-\abs{\log(1-\zeta)}} = \sum_{k=2}^\infty \frac{\zeta^k}{k}
\leq \frac{\zeta^2}{1-\zeta}.
\end{equation}
Combining the above two bounds on the range $\zeta<1/2$, and  using that
$\norm{F}_\infty<\infty$ (since $F\in D_\Gall$) on the range $\zeta>1/2$, we obtain 
$$
\sup_{\zeta\in(0,1)}\frac{1}{\zeta^2}\abs{\mint{F\left(\nu + \norm{\nu}\frac{\zeta}{1-\zeta}\delta_a\right) - F(\nu) - 
\norm{\nu}\abs{\log(1-\zeta)}\Ind{\zeta<1/2} F'(\nu;a)}{\frac{\nu(da)}{\norm{\nu}}}}<C.
$$
Plugging this bound into both $\Gjump{{\Lambda'}}F(\nu)$ and $\Gjump{{\Lambda}}F(\nu)$, we obtain \eqref{eq:GjumpBound}.
\end{proof}

The following Lemma is used to prove the convergence of the generators $\Gdall_n$ of the dual processes $\stp{\Pi_t^{(n)}, Z_t^{(n)}}$ defined for finite
$\zeta^{-2}\Lambda_n(d\zeta)$, to the operator $\Gdall$ defined with the limit $\Lambda=\lim_{n\to\infty}\Lambda_n$. We note that, since in fact we work with the ``joint'' operator $\GHall$, Lemma \ref{le:dualGeneratorBound} could also be used to derive Lemma \ref{le:boundOnGLambda}, albeit for a smaller family of functions. 
\begin{lemma}\label{le:dualGeneratorBound}
	Whenever
	$G_{p,\phi,h}\in D_{\GHall}$ we have $\norm{\GHall G_{p,\phi,h}}_{\infty}<\infty$.  
	Furthermore, for any pair of finite measures $\Lambda,\Lambda'$ on $(0,1)$ and corresponding operators $\GHall,\GHall'$ we have, for
	fixed $G_{p,\phi,h}\in D_{\GHall}\equiv D_{\GHall'}$ and some $C>0$, that	
	$$
	\norm{\GHall G_{p,\phi,h}- \GHall' G_{p,\phi,h}}_\infty\leq C\norm{\Lambda- \Lambda'}_{TV}.
	$$
\end{lemma}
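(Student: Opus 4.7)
The plan is to reduce both claims to a single uniform pointwise bound on the integrand of the $\Lambda$-integral in \eqref{eq:jointGenerator}, then exploit the compact support of $h$ together with a second-order Taylor expansion. Since $D_{\GHall}$ does not depend on $\Lambda$, and the drift, diffusion, and Kingman pieces in the first three lines of \eqref{eq:jointGenerator} are $\Lambda$-independent and trivially bounded uniformly in $(\nu,\tilde\pi,z)$ by a constant depending only on $|\drift|$, $\sigma^2$, $p$, $\|\phi\|_\infty$, $\|h\|_\infty$, $\sup_y y|h'(y)|$, $\sup_y y^2|h''(y)|$ (all finite because $h\in D_h$ has compact support in $(0,\infty)$, and $|H_\pi(\nu)|\le\|\phi\|_\infty$), both claims reduce to proving the uniform pointwise bound $|\Delta(\zeta,\nu,\tilde\pi,z)/\zeta^2|\le C$, where $\Delta$ denotes the integrand inside the braces of the $\Lambda$-integral.

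The first step toward this bound is a combinatorial simplification of $\Delta$. Writing $y=\norm{\nu}z$ and using $\pi^{(\emptyset)}=\pi^{(\{i\})}=\pi$, I would collect the $\ell=0,1$ contributions into $[(1-\zeta)^p+p\zeta(1-\zeta)^{p-1}]\,h(y/(1-\zeta))\,H_\pi(\nu) = (1-\zeta)^{p-1}[1+(p-1)\zeta]\,h(y/(1-\zeta))\,H_\pi(\nu)$; a direct expansion shows $(1-\zeta)^{p-1}[1+(p-1)\zeta]=1+O(\zeta^2)$ uniformly on $(0,1)$. Each $\ell\ge 2$ term carries a factor $\zeta^\ell\le\zeta^2$ with coefficient bounded by $\|h\|_\infty\|\phi\|_\infty$, hence contributes $O(\zeta^2)$ uniformly. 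Using the binomial identity $\sum_{\ell,J}(1-\zeta)^{p-\ell}\zeta^\ell=1$ to collapse the $h(y)H_\pi(\nu)$ subtraction, this leaves
\[
\Delta(\zeta) = \bigl[h(y/(1-\zeta))-h(y)-yh'(y)|\log(1-\zeta)|\Ind{\zeta\le 1/2}\bigr]H_\pi(\nu) + R(\zeta),
\]
with $|R(\zeta)|\le C\zeta^2$ uniformly in $(\nu,\tilde\pi,z)$.

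The main work is controlling the bracketed term. Using $y/(1-\zeta)=ye^u$ with $u=-\log(1-\zeta)=|\log(1-\zeta)|$, I would apply Taylor's theorem to $g(u):=h(ye^u)$, noting $g'(0)=yh'(y)$ and $|g''(u)|= |(ye^u)^2 h''(ye^u)+(ye^u)h'(ye^u)|\le \sup_w w^2|h''(w)|+\sup_w w|h'(w)|=:C_h<\infty$, both suprema finite because $h$ is compactly supported in $(0,\infty)$. The integral form of Taylor's remainder then gives $|g(u)-h(y)-uyh'(y)|\le C_h u^2\le 4C_h\zeta^2$ on $\zeta\in(0,1/2]$, using the elementary bound $-\log(1-\zeta)\le \zeta/(1-\zeta)\le 2\zeta$. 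On $\zeta\in(1/2,1)$ the compensator is absent, each of $h(y/(1-\zeta))$ and $h(y)$ is bounded by $\|h\|_\infty$, so $|\Delta(\zeta)|$ is bounded by a constant and $\zeta^{-2}\le 4$. Together these give the desired uniform bound on $\Delta/\zeta^2$.

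Integrating against $\Lambda(d\zeta)/\zeta^2$ then yields $\|\GHall G_{p,\phi,h}\|_\infty\le C+C\|\Lambda\|_{TV}<\infty$ for the first claim. For the second, since only the $\Lambda$-integral piece depends on $\Lambda$, the same uniform bound on $|\Delta(\zeta)/\zeta^2|$ together with $|\int f\,d(\Lambda-\Lambda')|\le \|f\|_\infty\|\Lambda-\Lambda'\|_{TV}$ delivers $|\GHall G_{p,\phi,h}-\GHall' G_{p,\phi,h}|\le C\|\Lambda-\Lambda'\|_{TV}$. I expect the only real obstacle to be the bookkeeping of the combinatorial simplification and the matching of the compensator $|\log(1-\zeta)|$ with the natural Taylor variable $u=-\log(1-\zeta)$; once $ye^u$ is recognized as the argument of $h$, the compact-support hypothesis on $h$ makes the remaining estimates routine.
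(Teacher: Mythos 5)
Your proof is correct and follows essentially the same route as the paper's: isolate the $\ell\ge 2$ terms (trivially $O(\zeta^2)$ since each carries a factor $\zeta^\ell$), match the $\ell\in\{0,1\}$ terms against the compensator via a second-order Taylor bound controlled by $\sup_w w\abs{h'(w)}$ and $\sup_w w^2\abs{h''(w)}$, handle $\zeta>1/2$ by boundedness of $h$ and $H_\pi$, and integrate the resulting uniform $O(\zeta^2)$ estimate against $\zeta^{-2}\abs{\Lambda-\Lambda'}(d\zeta)$, with the $\Lambda$-independent drift, diffusion and Kingman terms bounded separately. The only (minor, and arguably cleaner) difference is that you expand in the variable $u=-\log(1-\zeta)$, so the compensator is exactly the linear Taylor term of $h(ye^u)$, whereas the paper expands in $\zeta$ and must separately control the discrepancy between $\abs{\log(1-\zeta)}$ and $\zeta/(1-\zeta)$.
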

\begin{proof}
	Let us bound the third term in \eqref{eq:jointGenerator}. In what follows $C$ will be a constant
	that may vary from line to line, that only depends on $p,\norm{\phi}_\infty, \norm{h(x)}_{\infty}, \norm{xh'(x)}_\infty$ and $\norm{x^2h''(x)}_{\infty}$, but not on $\nu,\pi=\restr{\tilde\pi}{p}$ nor $z$. 
	Let us first bound the integrand appearing in  the third term of \eqref{eq:jointGenerator} within the interval  $\zeta \in (0,1/2)$.
	Starting the sum that appears inside the integral from the index $\ell=2$, and using  that $h\in\cbf(\R)$ and $H_\pi(\nu)\in\bbf(\sM(\typeS))$, we obtain:
	\begin{align*}
&\sum_{\ell = 2}^{p}  \sum_{\substack{J\subset [p]\\ \card J=\ell}} (1-\zeta)^{p-\ell}\zeta^{\ell} \lbr    h\left(\frac{\norm{\nu}z}{1-\zeta}\right) H_{\pi^{(J)}}(\nu)
		- h(\norm{\nu}z)  H_{\pi}(\nu)\rbr\\
		&\leq \sum_{\ell = 2}^{p}  \sum_{\substack{J\subset [p]\\ \card J=\ell}} (1-\zeta)^{p-\ell}\zeta^{\ell}   C<  C\zeta^2.
	\end{align*}
	\par For the remaining two indices $\ell\in\{0,1\}$ we have $H_{\pi^{(J)}}(\nu)=H_{\pi}(\nu)$
	whenever $J\subset[p]$ and $\card J=\ell$. Thus, 
	\begin{align}\label{eq:boundGdual}
		&\sum_{\ell\in\{0,1\}}  \sum_{\substack{J\subset [p]\\ \card J=\ell}} (1-\zeta)^{p-\ell}\zeta^{\ell} \left(    h\left(\frac{\norm{\nu}z}{1-\zeta}\right) H_{\pi^{(J)}}(\nu)
		- h(\norm{\nu}z)  H_{\pi}(\nu)\right)\nonumber\\
		&\quad\quad\quad- \norm{\nu}zh'(\norm{\nu}z)H_\pi(\nu)\abs{\log(1-\zeta)}\nonumber\\
		&=H_{\pi}(\nu)\left( \left((1-\zeta)^{p} + p(1-\zeta)^{p-1}\zeta\right) \left(    h\left(\frac{\norm{\nu}z}{1-\zeta}\right) - h(\norm{\nu}z) \right) 
		- \norm{\nu}zh'(\norm{\nu}z)\abs{\log(1-\zeta)}\right).
	\end{align}
	We now bound the r.h.s. above. Note that $(1-\zeta)^{p} + p(1-\zeta)^{p-1}\zeta = 1 - \bO(\zeta^2)$ as $\zeta\to0$. Then, using \eqref{eq:zetalogIneq} and then  $\zeta-\frac{\zeta}{1-\zeta}=\frac{\zeta^2}{1-\zeta}$ to obtain
	$\abs{\log(1-\zeta)}=\frac{\zeta}{1-\zeta}+\bO\left(\frac{\zeta^2}{1-\zeta}\right)$;  \eqref{eq:boundGdual} is bounded by
	$$
	C\left(
	h\left(\frac{\norm{\nu}z}{1-\zeta}\right) - h(\norm{\nu}z) - \norm{\nu}z\frac{\zeta}{1-\zeta}h'(\norm{\nu}z) \right).
	$$
	Using Taylor's expansion around $\zeta=0$ (note that $\frac{1}{1-\zeta}-1=\frac{\zeta}{1-\zeta}$) the above is bounded by
	\begin{align*}
		&C\left(
		\frac{h''(\eta_\zeta \norm{\nu}z)}{2}\left(\frac{\norm{\nu}z\zeta}{1-\zeta}\right)^2\right)
		=C\left(
		\frac{(\eta_\zeta\norm{\nu}z)^{2}h''(\eta_\zeta \norm{\nu}z)}{2}\left(\frac{\zeta}{1-\zeta}\right)^2\eta_\zeta^{-2}\right)
	\end{align*}
	where $\eta_\zeta\in [1,1/(1-\zeta)]$, so that $\eta_\zeta^{-2}$ is bounded on $\zeta\in(0,1/2).$
	Using that $x^2h''(x)\in\cbf(\R)$, the  term in the r.h.s. above is bounded by
	$$
	\forall \zeta\in(0,1/2): \quad C\left(C
	\left(\frac{\zeta}{1-\zeta}\right)^2\eta_\zeta^{-2}\right)<C\zeta^2.
	$$\par
	Putting the two bounds together, the one for $2\leq \ell\leq p$ and the one for $\ell\in\{0,1\}$, we conclude:
	\begin{align}\label{eq:GHLambda(0,1/2)}
	\forall \zeta\in(0,1/2): \quad	&\Bigg( \sum_{\ell = 2}^{p}  \sum_{\substack{J\subset [p]\\ \card J=\ell}} (1-\zeta)^{p-\ell}\zeta^{\ell} \lbr    h\left(\frac{\norm{\nu}z}{1-\zeta}\right) H_{\pi^{(J)}}(\nu)
		- h(\norm{\nu}z)  H_{\pi}(\nu)\rbr\nonumber\\
		&- \norm{\nu}zh'(\norm{\nu}z)H_\pi(\nu)\abs{\log(1-\zeta)}\Ind{\zeta\le 1/2}\Bigg)
		\leq C \zeta^2
	\end{align}
	uniformly on $\nu,\pi,z$. 
	\par
	On the other hand,
    since $h\in\cbf(\R)$ and $H_\pi(\nu)\in\bbf(\sM(\typeS))$, we also have
	\begin{align}\label{eq:GHLambda(1/2,1)}
	\forall \zeta\in(1/2,1): \quad		&\sum_{\ell = 0}^{p}  \sum_{\substack{J\subset [p]\\ \card J=\ell}} (1-\zeta)^{p-\ell}\zeta^{\ell} \left(    h\left(\norm{\nu}z\right) (H_{\pi^{(J)}}(\nu)
		- H_{\pi}(\nu))\right)\nonumber\\
		&<\sum_{\ell = 0}^{p}  \sum_{\substack{J\subset [p]\\ \card J=\ell}} (1-\zeta)^{p-\ell}\zeta^{\ell} C<C
	\end{align}
	uniformly on $\nu,\pi,z$.\par
	Combining both bounds we  obtain that if $\Lambda'$ is another finite measure on $(0,1)$, then
	\begin{align}\label{eq:jointGeneratorJumpBound}
		\Bigg\lvert \int_{(0,1)}\frac{\Lambda(d\zeta) - \Lambda'(d\zeta)}{\zeta^2}&\Bigg\{\sum_{\ell = 2}^{p}  \sum_{\substack{J\subset [p]\\ \card J=\ell}} (1-\zeta)^{p-\ell}\zeta^{\ell} \lbr    h\left(\frac{\norm{\nu}z}{1-\zeta}\right) H_{\pi^{(J)}}(\nu)
		- h(\norm{\nu}z)  H_{\pi}(\nu)\rbr\nonumber\\
		&- \norm{\nu}zh'(\norm{\nu}z)H_\pi(\nu)\abs{\log(1-\zeta)}\Ind{\zeta\le 1/2}\Bigg\}\Bigg\rvert 
		\leq C\norm{\Lambda - \Lambda'}_{TV}
	\end{align}
	from which $\norm{\GHall G_{p,\phi,h}- \GHall' G_{p,\phi,h}}_\infty\leq C\norm{\Lambda- \Lambda'}_{TV}$
	follows. \par
	Finally, the first two terms of $\GHall$ in \eqref{eq:jointGenerator} are easily bounded using that 
	 $h(x),xh'(x),x^2h''(x)\in\cbf(\R)$, together with the observation that $\norm{H_\pi}_\infty\leq\norm{\phi}$ for any partition $\pi\in\PS{p}$, $p\geq 1$. Setting $\Lambda'\equiv 0$ in \eqref{eq:jointGeneratorJumpBound} gives a bound for the third term, and $\norm{\GHall G_{p,\phi,h}}_{\infty}<\infty$ follows.	
\end{proof}

The following is an immediate consequence of Lemma \ref{le:dualGeneratorBound} and the definition of $(\Gdall,D_{\Gdall})$ in \eqref{eq:dualGenerator}. This corollary is used, in section \ref{sec:dualConstruction}, to prove that the weak limit of solutions to the martingale problems for $(\Gdall_n, D_\Gall)$ 
with corresponding jump measures $\Lambda_n$, is a solution to the martingale problem for $(\Gall, D_\Gall)$ with jump measure $\Lambda=\lim_{n\to\infty}\Lambda_n$.
 \begin{corollary}\label{cor:convergenceDualGenerators}
 	Let $G\in D_{\Gdall}$. Then, for any pair of finite measures $\Lambda,\Lambda'$ on $(0,1)$ and corresponding operators $\Gdall,\Gdall'$ we have,
 	for some $C>0$,
 	$$
 	\norm{\Gdall G - \Gdall' G}_\infty\leq C\norm{\Lambda- \Lambda'}_{TV}.
 	$$
 \end{corollary}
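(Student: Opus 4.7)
The plan is to derive the corollary directly from the definition of $(\Gdall, D_{\Gdall})$ together with the total-variation bound established in Lemma \ref{le:dualGeneratorBound}. The key observation is that the elements of $D_{\Gdall}$ are, by construction, restrictions (in the $\nu$-argument) of the functions $G_{p,\phi,h}(\nu,\tilde\pi,z) \in D_{\GHall}$ on which $\GHall$ and $\GHall'$ are defined, and the action of $\Gdall$ is literally the action of $\GHall$ with $\nu$ held fixed.

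Concretely, I would proceed as follows. First, given $G \in D_{\Gdall}$, unpack the definition \eqref{eq:dualGeneratorDomain} to obtain $h \in D_h$, $p \geq 1$, $\phi \in \cbf(\typeS^p)$, and $\nu \in \sM(\typeS)$ such that $G(\tilde\pi,z) = G_{p,\phi,h}(\nu,\tilde\pi,z)$ for every $(\tilde\pi,z) \in \PS{\infty} \times \Rp$. Then the defining identity \eqref{eq:dualGenerator} gives
\[
\Gdall G(\tilde\pi,z) - \Gdall' G(\tilde\pi,z) \;=\; \GHall G_{p,\phi,h}(\nu,\tilde\pi,z) - \GHall' G_{p,\phi,h}(\nu,\tilde\pi,z),
\]
with the same choice of $\nu$ on both sides.

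Second, I would take the supremum over $(\tilde\pi,z)$ on both sides. Since the right-hand side depends on $\nu$ only through the fixed value chosen above, the sup over $(\tilde\pi,z)$ is trivially dominated by the sup over $(\nu,\tilde\pi,z) \in \sM(\typeS) \times \PS{\infty} \times \Rp$. This last quantity is precisely $\norm{\GHall G_{p,\phi,h} - \GHall' G_{p,\phi,h}}_\infty$ in the sense of the uniform norm on $\bbf(\sM(\typeS) \times \PS{\infty} \times \Rp)$. Applying Lemma \ref{le:dualGeneratorBound} to the chosen $G_{p,\phi,h} \in D_{\GHall}$ yields a constant $C = C(p,\phi,h) > 0$ such that
\[
\norm{\Gdall G - \Gdall' G}_\infty \;\leq\; \norm{\GHall G_{p,\phi,h} - \GHall' G_{p,\phi,h}}_\infty \;\leq\; C \norm{\Lambda - \Lambda'}_{TV},
\]
which is the claim. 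There is no genuine obstacle here: the only point that warrants a line of explanation is the observation that, since $\nu$ is a fixed parameter in the representation of $G$, the supremum defining $\norm{\Gdall G - \Gdall' G}_\infty$ is bounded by (indeed a specialisation of) the supremum in Lemma \ref{le:dualGeneratorBound}, so the constant $C$ that appears in the conclusion is the one furnished by that lemma and depends only on $p$, $\norm{\phi}_\infty$, and the bounds on $h$, $xh'$, $x^2h''$, but not on $\nu$, $\tilde\pi$, or $z$.
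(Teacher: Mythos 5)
Your proposal is correct and follows exactly the route the paper intends: the paper presents this corollary as an immediate consequence of Lemma \ref{le:dualGeneratorBound} and the definition \eqref{eq:dualGenerator}, which is precisely the unpacking-and-specialisation argument you spell out. No further comment is needed.
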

\section{Duality and consequences}\label{sec:duality}
\begin{lemma}[Operator duality]\label{le:operatorDuality}
	Let $G_{p,\phi,h}\in D_{\GHall}$. Then 
	\begin{equation}\label{eq:dualGeneratorsEqual}
		\Gall G_{p,\phi,h}(\nu,\tilde \pi,z)=\GHall G_{p,\phi,h}(\nu,\tilde\pi,z)= \Gdall  G_{p,\phi,h}(\nu,\tilde\pi,z).
	\end{equation}
	On the left, $\Gall$ is applied to the function $G_{p,\phi,h}(\cdot,\tilde\pi,z)$ and the resulting function is evaluated at $\nu$. On the right, $\Gdall$ is applied to the function $G_{p,\phi,h}(\nu,\cdot,\cdot)$ and the result is then evaluated at $(\tilde\pi,z)$.
\end{lemma}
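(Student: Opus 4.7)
The second equality in \eqref{eq:dualGeneratorsEqual} is immediate from the definition \eqref{eq:dualGenerator}, so the content of the lemma is the first equality: with $(\tilde\pi,z)$ held fixed, $\Gall$ applied to $F(\nu):=h(\norm{\nu}z)H_\pi(\nu)$ must reproduce $\GHall G_{p,\phi,h}(\nu,\tilde\pi,z)$ term for term. I would compute $\Gdrift{\drift}F$, $\Gdiff{\sigma}F$ and $\Gjump{\Lambda}F$ separately and match them to the three pieces of \eqref{eq:jointGenerator}. The unifying tool is that $H_\pi$ depends on $\nu$ only through $\rho:=\nu/\norm{\nu}$, hence is homogeneous of degree zero in $\nu$; Euler's identity then reads
\begin{equation*}
\int_\typeS \nu(da)\,H_\pi'(\nu;a)=0,
\end{equation*}
and this will repeatedly annihilate the cross terms arising when the product rule is applied to $F=h(\norm{\nu}z)H_\pi(\nu)$.

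The drift part is a one-line product rule: $F'(\nu;a)=zh'(\norm{\nu}z)H_\pi(\nu)+h(\norm{\nu}z)H_\pi'(\nu;a)$, so integrating against $\drift\,\nu(da)$ and killing the second summand by Euler yields $\drift\norm{\nu}z\,h'(\norm{\nu}z)H_\pi(\nu)$, matching the first summand of $\GHall$. For the diffusion, a second application of the product rule gives
\begin{equation*}
F''(\nu;a,a)=z^2h''(\norm{\nu}z)H_\pi(\nu)+2zh'(\norm{\nu}z)H_\pi'(\nu;a)+h(\norm{\nu}z)H_\pi''(\nu;a,a);
\end{equation*}
after multiplying by $\tfrac{\sigma^2}{2}\norm{\nu}\nu(da)$ and integrating, Euler removes the middle term, the first contributes $\tfrac{\sigma^2}{2}\norm{\nu}^2z^2h''(\norm{\nu}z)H_\pi(\nu)$, and the last reduces to $\tfrac{\sigma^2}{2}h(\norm{\nu}z)\norm{\nu}\int\nu(da)H_\pi''(\nu;a,a)$. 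The identification of this last integral with the pair-coagulation sum $\sigma^2 h(\norm{\nu}z)\sum_{J\subset[p],\,\card J=2}[H_{\pi^{(J)}}(\nu)-H_\pi(\nu)]$ is the classical Dawson-Watanabe-to-Fleming-Viot computation (implicit in Proposition \ref{prop:DWselfsimilar} together with \eqref{eq:F'monomial}--\eqref{eq:pointDerPolyn} and \eqref{eq:FVgenerator}), and I expect this to be the main technical step.

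For the jump part, since $\norm{\nu+\norm{\nu}\tfrac{\zeta}{1-\zeta}\delta_a}=\norm{\nu}/(1-\zeta)$ and the post-jump normalization is $(1-\zeta)\rho+\zeta\delta_a$,
\begin{equation*}
F\!\left(\nu+\norm{\nu}\tfrac{\zeta}{1-\zeta}\delta_a\right)=h\!\left(\tfrac{\norm{\nu}z}{1-\zeta}\right)\big\langle\phi_\pi,((1-\zeta)\rho+\zeta\delta_a)^{\otimes\card\pi}\big\rangle.
\end{equation*}
Expanding the tensor power by the binomial theorem over subsets $J'\subset[\card\pi]$ of blocks of $\pi$ (each block independently picking $\delta_a$ with weight $\zeta$ or $\rho$ with weight $1-\zeta$) and integrating $\int\rho(da)$, which collapses the blocks indexed by $J'$ into a single coagulated block, gives
\begin{equation*}
\int_\typeS\rho(da)\,\big\langle\phi_\pi,((1-\zeta)\rho+\zeta\delta_a)^{\otimes\card\pi}\big\rangle=\sum_{J'\subset[\card\pi]}(1-\zeta)^{\card\pi-\card J'}\zeta^{\card J'}H_{\pi^{(J')}}(\nu).
\end{equation*}
Padding the index set from $[\card\pi]$ to $[p]$ as in \eqref{eq:jointGenerator} is harmless: by the conventions $\pi^{(\emptyset)}=\pi^{(\{i\})}=\pi$, one has $\pi^{(J)}=\pi^{(J\cap[\card\pi])}$ for every $J\subset[p]$, and the residual binomial sum $\sum_{K\subset[p]\setminus[\card\pi]}(1-\zeta)^{(p-\card\pi)-\card K}\zeta^{\card K}$ equals $1$. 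Finally, in the compensator of $\Gjump{\Lambda}$ the product-rule expansion of $F'(\nu;a)$ combined with Euler leaves only $\norm{\nu}zh'(\norm{\nu}z)H_\pi(\nu)\abs{\log(1-\zeta)}\Ind{\zeta<1/2}$, matching $\GHall$. Assembling the three parts reproduces \eqref{eq:jointGenerator} exactly; the only non-routine step is the diffusion identification, everything else being product-rule bookkeeping, Euler's identity, and a binomial expansion.
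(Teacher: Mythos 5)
Your proposal follows essentially the same route as the paper's proof: the second equality is definitional, and the first is verified by computing $\Gdrift{\drift}F$, $\Gdiff{\sigma}F$ and $\Gjump{\Lambda}F$ for $F(\nu)=h(\norm{\nu}z)H_\pi(\nu)$ and matching the three pieces of \eqref{eq:jointGenerator}. There are two cosmetic differences: you obtain $\int_\typeS\nu(da)\,H_\pi'(\nu;a)=0$ from degree-zero homogeneity of $H_\pi$ (Euler's identity), where the paper reads the same cancellation off the explicit formula \eqref{eq:freqGeneratorIsFV0}; and your jump term is a binomial expansion of $\langle\phi_\pi,((1-\zeta)\rho+\zeta\delta_a)^{\otimes\card\pi}\rangle$ over subsets of blocks, whereas the paper expands via permutation sums over the $p$ coordinates (reducing to the singleton partition and replacing $\phi$ by $\phi_\pi$). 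Your padding argument from $[\card\pi]$ to $[p]$, using $\pi^{(J)}=\pi^{(J\cap[\card\pi])}$ and the fact that the residual binomial sum equals $1$, is a correct and in fact slightly more careful reconciliation with the way \eqref{eq:jointGenerator} is written. The one step you flag but do not execute --- the identification $\norm{\nu}\int_\typeS\nu(da)\,H_\pi''(\nu;a,a)=2\sum_{\card J=2}\left[H_{\pi^{(J)}}(\nu)-H_\pi(\nu)\right]$ --- is precisely what the paper carries out in \eqref{eq:H'bound}--\eqref{eq:freqGeneratorIsFV1}, by differentiating \eqref{eq:freqGeneratorIsFV0} once more, observing that the terms $\norm{\nu}^{-2}(H_\pi^{\phi^{(i)}}-H_\pi)$ and $\norm{\nu}^{-1}H_\pi'$ integrate to zero against $\nu(da)$, and recognizing the surviving double sum as the pairwise coagulations; it is the standard Fleming--Viot computation you cite, so deferring to it is legitimate, but a complete write-up should include those few lines since the constant $2$ (which turns $\sigma^2/2$ into $\sigma^2$) is exactly what makes the Kingman term of \eqref{eq:jointGenerator} come out right.
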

\begin{proof}
	The equality on the r.h.s. of \eqref{eq:dualGeneratorsEqual} is the definition in \eqref{eq:dualGenerator}.\par
	Let us now prove the equality on the l.h.s. Fix $p\geq1,\pi\in\PS{[p]},z\in\R$ and let $F(\nu)=G_{p,\phi,h}(\nu,\tilde\pi,z)$.
	We first compute
	$\Gjump{\Lambda} F(\nu)$. It is sufficient to consider the case $\restr{\tilde\pi}{p}=\pi=\{\{1\},\cdots,\{p\}\}$, which simplifies notation. Indeed, otherwise one only needs to replace $\phi$ by $\phi_\pi$ in the following computations. Note that
	\begin{align}\label{eq:firstDerivativeDual}
		F'(\nu;a) &= \lim_{\eps\downarrow0} \frac{F(\nu+\eps\delta_a)-F(\nu)}{\eps}
		=\lim_{\eps\downarrow0} \frac{h((\norm{\nu}+\eps)z)H_\phi(\nu+\eps\delta_a) - h(\norm{\nu}z)H_\pi(\nu)}{\eps}\nonumber\\
		&= zh'(\norm{\nu}z) H_\pi(\nu) + h(\norm{\nu}z)H_\pi'(\nu;a).
	\end{align}\par
	Let us compute $H_\pi'(\nu;a)$ using \eqref{eq:pointDerPolyn} and --formally-- the product rule of differentiation. We have
	\begin{align}\label{eq:freqGeneratorIsFV0}
		H_\pi'(\nu;a)&= \frac{1}{\norm{\nu}^{2p}}\left(
		\sum_{i=1}^p \lbr \mint{\phi}{\nu^{\otimes i}\delta_a\nu^{\otimes p-i-1}}\norm{\nu}^p \rbr - 
		p\norm{\nu}^{p-1}\mint{\phi}{\nu^{\otimes p}}
		\right)\nonumber\\
		&=\frac{1}{\norm{\nu}}\sum_{i=1}^p \lbr \frac{\mint{\phi}{\nu^{\otimes i}\delta_a\nu^{\otimes p-i-1}}}{\norm{\nu}^{p-1}}
		-\frac{\mint{\phi}{\nu^{\otimes p}}}{\norm{\nu}^p} \rbr.
	\end{align}
	The above implies
	$\int_\typeS \nu(da)h(\norm{\nu}z)H_\pi'(\nu;a)=0$ so that, from \eqref{eq:firstDerivativeDual},
	\begin{equation}\label{eq:GdualCompensator}
		\int_\typeS \nu(da) F'(\nu;a) = \int_\typeS \nu(da) zh'(\norm{\nu}z)H_\pi(\nu)=
		\norm{\nu}zh'(\norm{\nu}z)H_\pi(\nu).
	\end{equation}
	On the other hand,
	\begin{align*}
		&F(\nu + \norm{\nu}\frac{\zeta}{1-\zeta}\delta_a) -F(\nu)\nonumber\\
		&= 
		\sum_{\ell = 0}^{p} \sum_{\vec m\in \perms(a,\ell,\nu,p)}h\left(\frac{\norm{\nu }z}{1-\zeta}\right) \frac{\mint{\phi}{\otimes_{i=1}^p m_i }}{\norm{\nu}^{p}(1-\zeta)^{-p}}
		\left(\frac{\norm{\nu}\zeta}{1-\zeta}\right)^{\ell} - \sum_{\ell = 0}^{p}(1-\zeta)^{p-\ell}\zeta^{\ell} \sum_{\vec m\in \perms(a,\ell,\nu,p)} h(\norm{\nu}z)\frac{\mint{\phi}{\nu^{\otimes p}}}{\norm{\nu}^p} \nonumber\\
		&=\sum_{\ell = 0}^{p} (1-\zeta)^{p-\ell}\zeta^{\ell} \sum_{\vec m\in \perms(a,\ell,\nu,p)} \lbr h\left(\frac{\norm{\nu}z}{1-\zeta}\right)\frac{\mint{\phi}{\otimes_{i=1}^p m_i }}{\norm{\nu}^{p-\ell}}
		- h(\norm{\nu}z) \frac{\mint{\phi}{\nu^{\otimes p}}}{\norm{\nu}^p}\rbr.
	\end{align*}
	Integrating the r.h.s. above  with respect to $\nu(da)/\norm{\nu}$, we obtain
	
	\begin{align}\label{eq:GdualJumps}
		&\int_\typeS \frac{\nu(da)}{\norm{\nu}}\lbr F\left(\nu + \norm{\nu}\frac{\zeta}{1-\zeta} \delta_a\right)
		- F(\nu) \rbr\nonumber\\
		&=\sum_{\ell = 0}^{p} (1-\zeta)^{p-\ell}\zeta^{\ell}  \sum_{\substack{J\subset [p]\\ \card J=\ell}} \lbr  h\left(\frac{\norm{\nu}z}{1-\zeta}\right)  \frac{\mint{\phi_{\pi^{(J)}}}{\nu^{\otimes (p-\ell+1)}}}{\norm{\nu}^{p-\ell+1}}
		- h(\norm{\nu}z) \frac{\mint{\phi}{\nu^{\otimes p}}}{\norm{\nu}^p}\rbr\nonumber\\
		&=\sum_{\ell = 0}^{p} (1-\zeta)^{p-\ell}\zeta^{\ell}  \sum_{\substack{J\subset [p]\\ \card J=\ell}} \lbr  h\left(\frac{\norm{\nu}z}{1-\zeta}\right)  \frac{\mint{\phi_{\pi^{(J)}}}{\nu^{\otimes p}}}{\norm{\nu}^{p}}
		- h(\norm{\nu}z) \frac{\mint{\phi}{\nu^{\otimes p}}}{\norm{\nu}^p}\rbr\nonumber\\
		&=\sum_{\ell = 0}^{p} (1-\zeta)^{p-\ell}\zeta^{\ell}  \sum_{\substack{J\subset [p]\\ \card J=\ell}} \lbr  h\left(\frac{\norm{\nu}z}{1-\zeta}\right) H_{\pi^{(J)}}(\nu)
		- h(\norm{\nu}z)  H_{\pi}(\nu)\rbr.
	\end{align}
	Combining \eqref{eq:GdualCompensator} and \eqref{eq:GdualJumps} we obtain
	\begin{align}
		&(\Gdrift{\drift}+\Gjump{\Lambda})F(\nu)= \drift \norm{\nu}zh'(\norm{\nu}z)H_\pi(\nu)\nonumber\\
		&+\int_{(0,1)}\frac{\Lambda(d\zeta)}{\zeta^2}\bigg\{\left(\sum_{\ell = 0}^{p} (1-\zeta)^{p-\ell}\zeta^{\ell}  \sum_{\substack{J\subset [p]\\ \card J=\ell}} \lbr  h\left(\frac{\norm{\nu}z}{1-\zeta}\right) H_{\pi^{(J)}}(\nu)
		- h(\norm{\nu}z)  H_{\pi}(\nu)\rbr\right)\nonumber\\
		&\quad \quad- \norm{\nu}zh'(\norm{\nu}z)H_\pi(\nu)\abs{\log(1-\zeta)}\Ind{\zeta\le 1/2}\bigg\}.
	\end{align}\par
	We now compute $\Gdiff{\sigma}F$. For this let us first compute $H''_\pi(\nu;a,a)$. 
	Fix $a\in\typeS$ and  note that, for any $1\leq i\leq p$, 
	we have $\frac{\mint{\phi}{\nu^{\otimes i}\delta_a\nu^{\otimes p-i-1}}}{\norm{\nu}^{p-1}}=\frac{\mint{\phi^{(i)}}{\nu^{\otimes i}\delta_a\nu^{\otimes p}}}{\norm{\nu}^{p}}=H_\pi^{\phi^{(i)}}(\nu)$
	where $\phi^{(i)}(x_1,\cdots,x_{p})=\phi(x_1,\cdots,x_{i-1},a,x_{i+1},...,x_{p}).$ Then, 
	plugging in \eqref{eq:freqGeneratorIsFV0} we obtain
	\begin{equation}\label{eq:H'bound}
	H_\pi'(\nu;a)=\frac{1}{\norm{\nu}}\sum_{i=1}^p \lbr H_\pi^{\phi^{(i)}}(\nu)
	-H_\pi(\nu) \rbr.
	\end{equation}
	Taking the derivative of each term above --using the product rule of differentiation as before-- we obtain
	\begin{equation}\label{eq:H''bound}
	H''_\pi(\nu;a,a)=\sum_{i=1}^p \frac{(\norm{\nu}\left(H_\pi^{\phi^{(i)}}\right)'(\nu;a)-H_\pi^{\phi^{(i)}}(\nu)) - (\norm{\nu}H_\pi'(\nu;a) - H_\pi(\nu))}{\norm{\nu}^2}.
	\end{equation}
	As before, from \eqref{eq:freqGeneratorIsFV0} we obtain
	$
	\int_\typeS \nu(da) \sum_{i=1}^p  \frac{H_\pi^{\phi^{(i)}}(\nu)  - H_\pi(\nu)}{\norm{\nu}^2} = 0,
	$
	and also
	$
	\int_\typeS \nu(da) \sum_{i=1}^p \frac{H'_\pi(\nu;a)}{\norm{\nu}}=0.
	$
	Thus, using \eqref{eq:freqGeneratorIsFV0} --replacing $\phi$ therein by $\phi^{(i)}$-- we obtain
	\begin{align}\label{eq:freqGeneratorIsFV1}
		\norm{\nu}\int_\typeS\nu(da) H''_\pi(\nu;a,a) 
		&=\sum_{i=1}^p \int_\typeS\frac{\nu(da)}{\norm{\nu}}\sum_{j=1}^p \lbr \frac{\mint{\phi^{(i)}}{\nu^{\otimes {j-1}}\delta_a\nu^{\otimes p-{j}}}}{\norm{\nu}^{p-1}}
		-\frac{\mint{\phi^{(i)}}{\nu^{\otimes p}}}{\norm{\nu}^p} \rbr \nonumber\\
		&=2\sum_{\substack{J\subset [p]\\ \card J=2}} \lbr   H_{\pi^{(J)}}(\nu)
		-   H_{\pi}(\nu)\rbr.
	\end{align}\par
	Continuing with the computation of $\Gdiff{\sigma}F$, from \eqref{eq:firstDerivativeDual} we obtain 
	\begin{equation*}\label{eq:F''bound}
	F''(\nu;a,a) = z^2h''(\norm{\nu} z)H_\pi(\nu) + 2zh'(\norm{\nu} z)H_\pi'(\nu;a) 
	+ h(\norm{\nu}z)H_\pi''(\nu;a,a).
	\end{equation*}
	Note that, similarly as before, $\norm{\nu}\int_\typeS\nu(da) zh'(\norm{\nu} z)H_\pi'(\nu;a)=0$ so that,
	using \eqref{eq:freqGeneratorIsFV1} in the second term of the r.h.s. above, 
	\begin{equation}\label{eq:Gdiffeq1}
	\Gdiff{\sigma}F=\norm{\nu}\int_\typeS\nu(da) \frac{\sigma^2}{2} F''(\nu;a,a) = \frac{\sigma^2}{2}\norm{\nu}^2z^2h''(\norm{\nu} z)H_\pi(\nu) +  \sigma^2{h(\norm{\nu}z)}\sum_{\substack{J\subset [p]\\ \card J=2}} \lbr   H_{\pi^{(J)}}(\nu)
	-   H_{\pi}(\nu)\rbr.
	\end{equation}
	Gathering terms we obtain \eqref{eq:dualGeneratorsEqual}.
\end{proof}

We now make use of the computations carried out in the above proof in order to prove that $D_\DWG'\subset \D(\DWG)$ (Proposition \ref{prop:DWG'subsetDDWG}). This in turn is one of the steps of the proof
that $D_\Gall'\subset D_\Gall$ in the following Corollary \ref{cor:D_Gall'<D_Gall}.
\begin{proof}[Proof of Proposition \ref{prop:DWG'subsetDDWG}]
	Let $\phi\in \cbf(\typeS^p)$ in \eqref{eq:defD_DWG'} be of the form $\phi=\phi_1\dots \phi_p$ with $\phi_i\in\cbf(\typeS)$. Also consider $\phi_0\equiv 1$. Assume that $\norm\mu>0$. Then,
	for $h\in D_h$ for which $\restr{h}{(0,\infty)}\in \cKdf{\infty}(\Rp)$, 
	clearly $F(\mu)=h(\norm{\mu})\mint{\phi}{\mu^{\otimes p}}=h(\mint{\phi_0}{\mu})\prod_{i=1}^p \mint{\phi_i}{\mu}$ 
	satisfies $F\in D_\DWG$. The latter easily extends to $F(\mu)=h(\norm{\mu})\mint{\phi}{\mu^{\otimes p}}$ for any $\phi\in\break\linearspan(\{\phi_1\cdots \phi_p\colon \phi_i\in\cbf(\typeS)\})$. 
	Note that, taking $\hat h(x)=h(x)x^{p}$ we have, for $\hat F(\mu)=\hat h(\norm{\mu}) H^{(\phi)}_{\pi}(\mu)$ with $\pi$ the singleton partition of $[p]$,
	that $\hat F(\mu)= F(\mu)$. From \eqref{eq:mvpSMHdiffgenerator} and \eqref{eq:Gdiffeq1} we thus obtain, for $\norm{\mu}>0$,
	\begin{align*}
		\DWG F(\mu)& = \frac{1}{\norm{\mu}} \Gdiff{\sigma} \hat F(\mu)	   	\nonumber\\
		&= \frac{\sigma^2}{2}\norm{\mu}\hat h''(\norm{\mu})H_\pi(\mu) +  \sigma^2\frac{\hat h(\norm{\mu})}{\norm{\mu}}\sum_{\substack{J\subset [p]\\ \card J=2}} \lbr   H^{(\phi)}_{\pi^{(J)}}(\mu)
		-   H^{(\phi)}_{\pi}(\mu)\rbr
	   	\nonumber\\
	    &=\frac{\sigma^2}{2}(h''(\norm{\mu})\norm{\mu}^{p+1}+2ph'(\norm{\mu})\norm{\mu}^{p}+(p-1)h(\norm{\mu})\norm{\mu}^{p-1})H_\pi(\mu) 
	    \nonumber\\
	    &+  \sigma^2h(\norm{\mu})\norm{\mu}^{p-1}\sum_{\substack{J\subset [p]\\ \card J=2}} \lbr  
	    H^{(\phi)}_{\pi^{(J)}}(\mu)
	    -   H^{(\phi)}_{\pi}(\mu)\rbr
	\end{align*}
	and $\DWG F(\mu)=0$ if $\mu=0$. Note that $h$ being of compact support, the above expression is uniformly bounded on
	$\mu\in\sM(\typeS)$ by a constant $C$ that depends on $\phi$ only through $\norm{\phi}_\infty$. Furthermore, 
	if $\phi_n\to\phi$ boundedly point-wise, then by dominated convergence also $H^{(\phi_n)}_\pi\to H^{(\phi)}_\pi$ boundedly pointwise in $\mu\in\sM(\typeS)\setminus \{0\}.$ 
	Then for the corresponding functions $F_{\phi_n}$, we have $F_{\phi_n}\to F_\phi$ and $\DWG F_{\phi_n}\to \DWG F_\phi$  boundedly point-wise. Note that
	by the Stone-Weierstrass theorem the set $\linearspan(\{\phi_1\cdots \phi_p\colon \phi_i\in\cbf(\typeS)\})$ is dense in $\czf(\typeS^p)$ for the topology of uniform convergence
	on compact sets of $\typeS^p$, and thus its bounded point-wise closure contains $\cbf(\typeS^p)$. Thus the set of functions $D''_{\DWG}=\{h(\norm{\mu})\mint{\phi}{\mu^{\otimes p}}\colon h\in D_h, p\geq1, \phi\in\cbf(\typeS^p)\}$
	satisfies that $(\DWG, D''_\DWG)$ is in the bounded point-wise closure of $(\DWG, D_\DWG)$, so that
	any solution to the martingale problem for $(\DWG,D_\DWG)$ is also a solution to the martingale problem for
	$(\DWG,D''_\DWG)$ (see e.g. Section III in Chapter IV \cite{EthierKurtz86}). Thus we conclude $D''_\DWG\subset \D(\DWG)$.
	Clearly for any $h\in D_h$ we have
	$x^{-p}h(x)\in D_h$ for any $p\geq1$ so that also $D'_\DWG\subset D''_{\DWG}$, and the proof is finished.
\end{proof}

\begin{corollary}\label{cor:D_Gall'<D_Gall}
	We have $D_{\Gall}'\subset D_{\Gall}$.
\end{corollary}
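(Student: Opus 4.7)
The plan is to take an arbitrary $F(\nu) = G_{p,\phi,h}(\nu,\tilde\pi,z) = h(\norm{\nu}z)H_\pi^{(\phi)}(\nu)$ in $D_\Gall'$ (where $\pi = \restr{\tilde\pi}{p}$ and $\tilde\pi, z$ are fixed parameters), and to verify in turn each of the three conditions defining $D_\Gall$ in \eqref{eq:coreSMH}: that $F \in \D(\DWG)$, that $F$ lies in $\cdf{2}(\sM(\typeS))$, and that $\abs{F(\nu)} + \norm{\nu}\abs{F'(\nu;a)} + \norm{\nu}^2\abs{F''(\nu;a,a)}$ is uniformly bounded.

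The first condition will follow at once from Proposition \ref{prop:DWG'subsetDDWG}. Setting $\tilde h(x) \coloneqq h(xz)$, one may rewrite $F(\nu) = \tilde h(\norm{\nu})\,\mint{\phi_\pi}{(\nu/\norm{\nu})^{\otimes \card\pi}}$. When $z>0$, $\tilde h$ belongs to $D_h$ (it is smooth, satisfies $\tilde h(0) = h(0) = 0$, and its restriction to $(0,\infty)$ inherits the compact support in $(0,\infty)$ of $h$), and $\phi_\pi$ lies in $\cbf(\typeS^{\card\pi})$ by construction, so $F \in D_\DWG' \subset \D(\DWG)$. The degenerate case $z = 0$ forces $F \equiv 0$, which trivially belongs to $D_\Gall$.

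For the remaining two conditions the plan is to re-use the derivative formulas that already appear inside the proof of Lemma \ref{le:operatorDuality}. Combining \eqref{eq:firstDerivativeDual} with \eqref{eq:H'bound} yields
\[
\norm{\nu}F'(\nu;a) \;=\; (\norm{\nu}z)h'(\norm{\nu}z)H_\pi(\nu) + h(\norm{\nu}z)\sum_{i=1}^{p}\bigl[H_\pi^{\phi^{(i)}}(\nu) - H_\pi(\nu)\bigr],
\]
and an analogous manipulation of the expression for $F''(\nu;a,a)$ just before \eqref{eq:Gdiffeq1}, together with \eqref{eq:H''bound}, writes $\norm{\nu}^2 F''(\nu;a,a)$ as a finite linear combination of terms of the form $(\norm{\nu}z)^k h^{(k)}(\norm{\nu}z)$ with $k \in \{0,1,2\}$, multiplied by expressions of the shape $H_\pi^{(\psi)}(\nu)$ or $\norm{\nu}(H_\pi^{(\psi)})'(\nu;a)$, where $\psi$ is obtained from $\phi_\pi$ by substituting one or two coordinates by $a$.

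Uniform boundedness then follows from three elementary observations: (i) because $h \in \cKdf{\infty}$ has compact support in $(0,\infty)$, the functions $h(x)$, $xh'(x)$ and $x^2h''(x)$ are all bounded on $\Rp$; (ii) $\abs{H_\pi^{(\psi)}(\nu)} \leq \norm{\psi}_\infty \leq \norm{\phi}_\infty$ whenever $\psi$ is obtained as above, since $\nu/\norm{\nu}$ is a probability measure; and (iii) by iterating (ii), the quantities $\norm{\nu}H_\pi'(\nu;a)$ and $\norm{\nu}^2 H_\pi''(\nu;a,a)$ are themselves uniformly bounded. Continuity of $F, F', F''$ in $\nu$ and $a$ is immediate from the continuity of $\phi_\pi$ and of $\nu \mapsto \mint{\phi_\pi}{\nu^{\otimes p}}$ for the weak topology, with the observation that $h \equiv 0$ on a neighborhood of $0$ removes the only possible singularity at $\nu = 0$. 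No step here is a genuine obstacle; the substance of the proof is simply that the compact support of $h$ away from $0$ precisely absorbs the $1/\norm{\nu}^k$ singularities that appear in the Gateaux derivatives of $H_\pi^{(\phi)}$, allowing the prefactors $\norm{\nu}^k$ in \eqref{eq:coreSMH} to yield a bounded quantity.
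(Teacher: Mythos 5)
Your proposal is correct and follows essentially the same route as the paper: both verify the uniform bound $\abs{F} + \norm{\nu}\abs{F'} + \norm{\nu}^2\abs{F''}\leq C$ by recycling the derivative formulas \eqref{eq:firstDerivativeDual}, \eqref{eq:H'bound}, \eqref{eq:H''bound} and the expression for $F''$ from the proof of Lemma \ref{le:operatorDuality}, together with the boundedness of $h(x)$, $xh'(x)$, $x^2h''(x)$ for $h\in D_h$, and both invoke Proposition \ref{prop:DWG'subsetDDWG} for membership in $\D(\DWG)$. Your explicit reduction $\tilde h(x)=h(xz)$ and the separate treatment of $z=0$ are small refinements the paper leaves implicit, but they do not change the argument.
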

\begin{proof}
	Let us carry the notation from Lemma \ref{le:operatorDuality}, in particular recall the function $F$ therein. 
	In the following $C>0$ will denote a constant that may vary from line to line but that does not depend on $\nu$. 
	Clearly, if $h\in D_h$ then $h$ is bounded, and so is $F$. 
	From \eqref{eq:H'bound} we have $\norm{\nu}H_\pi'(\nu)<C$. 
	From the latter together with \eqref{eq:firstDerivativeDual}, and using that $xh'(x)<C$ whenever $h\in D_h$, we obtain $\norm{\nu}F'(\nu)<C.$
    Also, 
	from $\norm{\nu}H_\pi'(\nu)<C$ together with \eqref{eq:H''bound} we obtain
	$\norm{\nu}^2H_\pi''(\nu)<C$. From the latter, together with \eqref{eq:F''bound} and  for $h\in D_h$, we also obtain $\norm{\nu}F''(\nu)<C$. This proves
	$\forall a\in\typeS, \nu\in\sM(\typeS);\quad  \abs{F(\nu)} + \norm{\nu}\abs{F'(\nu;a)} + \norm{\nu}^2\abs{F''(\nu;a,a)}
	\leq C.$ Finally, since by Proposition \ref{prop:DWG'subsetDDWG} we also have $F\in \D(\DWG)$, we conclude $F\in D_\Gall.$
\end{proof}

In order to ensure uniqueness of solutions to the martingale problems for $(\Gall,D'_{\Gall})$ and $(\Gdall,D_{\Gdall})$, we need to show that the duality in Theorem \ref{th:duality} (eq. \eqref{eq:duality}) holds for a sufficiently large class of functions. This is a consequence of the following lemma.
\begin{lemma}\label{le:dualityDomainSeparates}
	\begin{enumerate}   
		\item \label{itTh:D_Gall'separates}
    The set $\linearspan(D_{\Gall}')$  is dense in $\cbf(\sM(\typeS))$ in the topology of uniform convergence on compact sets, and in particular is separating on $\sPM(\sM(\typeS))$.\par
    	\item  \label{itTh:D_Gdallseparates}
	The set $\linearspan(D_{\Gdall})$ is
	separating on $\sPM(\PS{\infty}\times \Rp).$
	\end{enumerate}
\end{lemma}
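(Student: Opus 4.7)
The two claims share a common architecture: in both, the generating functions factor as a product of a radial part $h(\norm{\nu}z)$ (depending on the total mass) and an angular part $H^{(\phi)}_\pi(\nu)=\langle\phi_\pi,(\nu/\norm{\nu})^{\otimes p}\rangle$ (depending on the direction and the partition). The plan is to reduce each claim, via a Stone--Weierstrass tensor-product argument, to (i) density of $\cKdf{\infty}((0,\infty))$ in $\czdf{0}((0,\infty))$ for the radial factor, and (ii) density of polynomials on $\sPM(\typeS)$ (respectively of cylindrical functions on $\PS{\infty}$) for the angular factor.

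For the first item, I would start with the log-polar identification $\sM(\typeS)\setminus\{0\}\cong(0,\infty)\times\sPM(\typeS)$ via $\nu\mapsto(\norm{\nu},\nu/\norm{\nu})$; under this map every $F\in D_\Gall'$ becomes $(t,\rho)\mapsto h(tz)\langle\phi_\pi,\rho^{\otimes p}\rangle$. As $(h,z)$ ranges over $D_h\times(0,\infty)$, the rescaled functions $t\mapsto h(tz)$ cover $\cKdf{\infty}((0,\infty))$, which is dense in $\czdf{0}((0,\infty))$; taking the finest partition $\pi=\{\{1\},\dots,\{p\}\}$ recovers every polynomial $\rho\mapsto\langle\phi,\rho^{\otimes p}\rangle$, and (the extension of) Lemma 2.1.2 in \cite{Dawson1993} gives density of these in $\cf(\sPM(\typeS))$ since $\sPM(\typeS)$ is compact. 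A tensor-product Stone--Weierstrass argument on the locally compact space $(0,\infty)\times\sPM(\typeS)$ then yields density of $\linearspan(D_\Gall')$ in $\czdf{0}((0,\infty)\times\sPM(\typeS))$, which translates to density in $\cbf(\sM(\typeS))$ on compacta bounded away from $\{0\}$. For the second item, the radial factor is treated identically, with $\norm{\nu}>0$ playing the role of the scaling parameter; for the angular factor, the compact topology on $\PS{\infty}$ is generated by the projections $\tilde\pi\mapsto\restr{\tilde\pi}{p}$, so it suffices to verify that for each $p$ the family $\{\pi\mapsto H^{(\phi)}_\pi(\nu)\colon\phi\in\cbf(\typeS^p),\,\nu\in\sM(\typeS)\}$ separates the finite set $\PS{[p]}$; this is a short combinatorial check, carried out by choosing $\nu$ supported on $p$ distinct atoms of $\typeS$ and $\phi$ of tensor form $\phi=\prod_if_i$ and verifying that distinct partitions give distinct integrals.

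The main obstacle in both parts is the treatment of the boundary where the radial factor vanishes --- $\{\nu=0\}$ for the first item, $\{z=0\}$ for the second --- since $h(0)=0$ forces every generating function to be zero there. This is why literal density in $\cbf$ on compacta meeting this boundary fails, and the density statement must be read modulo a constant. For the separating property on $\sPM$, however, the normalization $P_1,P_2$ being probability measures closes the gap: if $\int F\,dP_1=\int F\,dP_2$ for every $F$ in the family, the density established above forces $P_1$ and $P_2$ to agree as measures on $\sM(\typeS)\setminus\{0\}$ (respectively on $\PS{\infty}\times(0,\infty)$), and the total-mass condition then determines the remaining mass on the complementary ``cemetery'' slice, giving $P_1=P_2$. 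This boundary reduction is the only step that is not purely routine Stone--Weierstrass.
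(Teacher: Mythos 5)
Your proposal is correct in substance and reaches the same conclusions by the same underlying tool (Stone--Weierstrass), but it is organized differently from the paper's proof, and in one place it is more careful. The paper does not pass through the product space $(0,\infty)\times\sPM(\typeS)$: for item \ref{itTh:D_Gall'separates} it verifies directly that $D_{\Gall}'$ is closed under multiplication (by concatenating the two partitions, tensoring the $\phi$'s, and setting $h(x)=h^{(1)}(xz^{(1)})h^{(2)}(xz^{(2)})$) and separates points of $\sM(\typeS)$, and then invokes Stone--Weierstrass on $\sM(\typeS)$ itself; your factorization into a radial part ranging over $D_h$ and an angular polynomial part is an equivalent repackaging of that verification. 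For item \ref{itTh:D_Gdallseparates} the routes genuinely diverge: the paper builds discontinuous test functions $\phi^{(\hat\pi)}$ whose integrals against a fixed atomic $\rho_{\hat\pi}$ produce $c_{\hat\pi}\Ind{\pi=\hat\pi}$, and must therefore insert a bounded point-wise closure / dominated convergence step to relate these to $D_{\Gdall}$; your reduction to separating the finite sets $\PS{[p]}$ with continuous tensor $\phi$'s (e.g.\ taking $f_i=f_j=g$ with $g$ non-constant on the support of $\rho$, which distinguishes a partition placing $i,j$ in one block, giving $\mint{g^2}{\rho}$, from one separating them, giving $\mint{g}{\rho}^2$) avoids that detour and is the more economical argument.

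Your observation about the boundary is a genuine point that the paper glosses over: since $h(0)=0$ for every $h\in D_h$, every element of $\linearspan(D_{\Gall}')$ vanishes at $\nu=0$ and every element of $\linearspan(D_{\Gdall})$ vanishes on $\PS{\infty}\times\{0\}$, so the literal density claim on compacta containing the zero measure, and the point-separation claim as used in the paper's Stone--Weierstrass step, both fail on these slices. Your total-mass repair completely fixes item \ref{itTh:D_Gall'separates}, because the exceptional slice is the single point $\{0\}$. For item \ref{itTh:D_Gdallseparates}, however, the repair is incomplete: the slice $\PS{\infty}\times\{0\}$ is not a singleton, and two probability measures $\delta_{(\tilde\pi_1,0)}$ and $\delta_{(\tilde\pi_2,0)}$ with $\tilde\pi_1\neq\tilde\pi_2$ integrate every $G\in D_{\Gdall}$ to zero, so the total-mass condition determines only the mass of the slice, not the law of the partition coordinate on it. This residual defect is shared by the paper's own proof and is harmless in the application (where $Z_t=e^{\xi_t}>0$), but a fully rigorous statement should either restrict the separating claim to $\sPM(\PS{\infty}\times(0,\infty))$ or record that the dual process never charges $\{z=0\}$.
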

\begin{proof}
	Both statements follow from an application of the Stone-Weierstrass approximation theorem.\par 
	To obtain {\it \ref{itTh:D_Gall'separates}} we will prove that the set
	$D_{\Gall}'$ is closed under multiplication and separates points in $\sM(\typeS)$; this will of course imply the same properties for $\linearspan(D_{\Gall}')$. Having proved the latter, the Stone-Weierstrass theorem then implies that the set $\linearspan(D_{\Gall}')$ is dense in $\cbf(\sM(\typeS))$ in the topology of uniform convergence on compact sets, and thus separating on $\sPM(\sM(\typeS))$.\par
	We first show that the sets of functions $D_{\Gall}'$ is closed under multiplication. Indeed, let $F^{(i)}$, $i\in\{1,2\}$, be defined as in \eqref{eq:GDualityDomain} with corresponding parameters 
	$p^{(i)}, \phi^{(i)}, h^{(i)}, \tilde \pi^{(i)}, z^{(i)}$. Set $\pi^{(i)}=\restr{\tilde\pi^{(i)}}{p^{(i)}}.$ Let $\pi^{(2)}+p^{(1)}$ be the partition of $\PS{\{p^{(1)}+1,\cdots,p^{(1)}+p^{(2)}\}}$ that
	results from translating each element $i$ of the blocks of $\pi^{(2)}$ by $i\to i+p^{(1)}$.  Construct $\pi\in\PS{[p^{(1)}+p^{(2)}]}$ by taking the union
	$\pi^{(1)} \cup (\pi^{(2)}+p^{(1)}).$ Let $\tilde\pi\in \PS{\infty}$ be any partition such that $\restr{\tilde\pi^{(i)}}{p^{(1)}+p^{(2)}}=\pi.$ Also set
	$\phi(a_1,\cdots,a_{p^{(1)}+p^{(2)}})=\phi^{(1)}(a_1,\cdots,a_{p^{(1)}})\phi^{(2)}(a_{p^{(1)}+1},\cdots,a_{p^{(1)}+p^{(2)}}).$ Let also $\phi_{i,\pi^{(i)}}$ be the function 
	defined by \eqref{eq:defPhiPi} with $\phi$ and $\pi$ therein set to $\phi=\phi^{(i)}$ and $\pi=\pi^{(i)}$. Then
	$$
	H_{\pi}(\nu)=\mint{\phi_\pi}{\left(\frac{\nu}{\norm{\nu}}\right)^{\otimes \card \pi}} = 
	\mint{\phi_{1,\pi^{(1)}}}{\left(\frac{\nu}{\norm{\nu}}\right)^{\otimes \card \pi^{(1)}}} 
	\mint{\phi_{2,\pi^{(2)}}}{\left(\frac{\nu}{\norm{\nu}}\right)^{\otimes \card \pi^{(2)}}}=H_{\pi^{(1)}}(\nu)H_{\pi^{(2)}}(\nu).
	$$
    Also, setting $h(x)=h^{(1)}(x z^{(1)})h^{(2)}(x z^{(2)})$ and $z=1$, we have $h(\norm{\nu}z)=h^{(1)}(\nu  z^{(1)})h^{(2)}(\nu  z^{(2)})$. Differentiating $h$ using the product rule, and using that $h^{(1)},h^{(2)}\in D_h$  one easily sees that also $h\in D_h$.
    Thus
    $F^{(1)}(\nu)F^{(2)}(\nu)=h(\nu z)H_{\pi}(\nu)\in D_{\Gdall}'.$\par 
    That $D_{\Gall}'$ separates points is a simple consequence of the fact $D_h$ separates points in $\Rp$, and that
    the family of functions
    $\{\rho \to \mint{\phi}{\rho}\}_{\phi\in\bbf(\typeS)}$ separates points in $\sPM(\typeS).$ This concludes the proof of {\it\ref{itTh:D_Gall'separates}}. This completes de proof of {\it\ref{itTh:D_Gall'separates}}.
    \par
    We now prove {\it \ref{itTh:D_Gdallseparates}}. Let $p\geq1$ and $\hat \pi \in \PS{[p]}$ be arbitrary but fixed. Consider the function $\phi^{(\hat \pi)}\in \bbf(\typeS^p)$ such that  $\phi^{(\hat \pi)}(a_1,\cdots,a_p)=1$ if one obtains the partition $\hat\pi$ after putting together in the same block the indices $i$ and $j$  if and only if $a_i=a_j$. Set $\phi^{(\hat \pi)}(a_1,\cdots,a_p)=0$ whenever the partition obtained in this way is different from $\hat\pi$. Formally, for $1\leq i\leq \card \hat\pi$, let $\min \hat \pi_i$ be the smallest element in $\hat\pi_i$. Also 
    recall that $\hat \pi(j)=i$ whenever $j\in\hat \pi_i$. Then set
    $$
    \phi^{(\hat \pi)}(a_1,\cdots, a_p)= \left(\prod_{i=1}^{\card\hat\pi} \prod_{\substack{j\in[p]\\\hat\pi(j)=i}} \Ind{a_j=a_{\min\hat\pi_i}}\right)
    \Ind{a_{\min\hat\pi_1}\neq \cdots \neq a_{\min\hat\pi_{\card\hat\pi}}}.
    $$
    Furthermore, a simple computation
    shows that for all $\pi=\restr{\tilde\pi}{p},$ $\tilde{\pi}\in\PS{\infty}$, the function $ \phi^{(\hat\pi)}_\pi$ is given by
    $$
    \phi^{(\hat\pi)}_\pi(a_1,\cdots,a_{\card\pi }) \equiv  (\Ind{\pi = \hat\pi}) (\Ind{a_{\min\pi_1}\neq \cdots \neq a_{\min\pi_{\card\pi}}}) = (\Ind{\pi = \hat\pi})( \Ind{a_{\min\hat\pi_1}\neq \cdots \neq a_{\min\hat\pi_{\card\hat\pi}}}).
    $$
    Let $\rho_{\hat\pi}= (\card\hat\pi)^{-1} \sum_{i=1}^{\card\hat\pi}\delta_{b_i}\in \sPM(\typeS)$, where $(b_i)_{i=1}^{\card\hat\pi}$ are  arbitrary but distinct elements  
    in $\typeS$. Then, letting $c_{\hat\pi}$ be the constant $c_{\hat\pi}=\mint{\Ind{a_{\min\hat\pi_1}\neq \cdots \neq a_{\min\hat\pi_{\card\hat\pi}}}}{\rho^{\otimes p}_{\hat\pi}}=\prod_{i=1}^{\card\hat\pi}\frac{i}{\card\hat\pi}$
    we obtain
    $$
    \mint{\phi^{(\hat\pi)}_\pi}{\rho^{\otimes p}_{\hat\pi}} = c_{\hat\pi}\Ind{\pi=\hat\pi} 
    .
    $$
    Consider the set of functions $D_{\Gdall}'$ on $\PS{\infty}\times\Rp$ given by
    $$
    D_{\Gdall}'\coloneqq \left\{G(\tilde\pi,z)=h(z)\frac{1}{c_{\hat\pi}} \mint{\phi^{(\hat\pi)}_\pi}{\rho^{\otimes p}_{\hat\pi}}
    \colon h\in D_h, p\geq 1, \hat\pi\in\PS{[p]} \right\} \equiv 
    \left\{h( z) \Ind{\pi=\hat\pi}\colon h\in D_h, p\geq 1, \hat\pi\in\PS{[p]}  \right\},
    $$
    where we recall that $\pi=\restr{\tilde\pi}{p}$. 
    Note that the indicator function $\Ind A$ of diagonal (closed) sets  $A\subset\typeS^k$ can be boundedly point-wise approximated
    by functions in $\cbf(\typeS^k)$, e.g. by $\phi_n(a_1,\cdots,a_p)=e^{-n \dist_{\typeS^p}((a_1,\cdots,a_p), A)}$ where $\dist_{\typeS^p}$ is the natural metric on $\typeS^p$.  This implies that
    $\phi^{(\hat\pi)}$ can be  boundedly approximated by functions $\phi$ in $\cbf(\typeS^p)$ and, by dominated convergence,
    also the function $\tilde\pi\to\mint{\phi^{(\hat\pi)}_{\restr{\tilde\pi}{p}}}{\rho^{\otimes p}_{\hat\pi}}$ can be boundedly point-wise approximated. We thus conclude that $D_{\Gdall}'$ is in the bounded point-wise closure
    of $D_{\Gdall}$. Therefore, a dominated convergence argument yields that if $\linearspan(D_{\GHall'})$ is separating on $\sPM(\PS{\infty}\times \Rp)$, then so is $\linearspan(D_{\Gdall})$.
    Let us then prove that $D_{\Gdall}'\subset \cbf(\PS{\infty}\times \Rp)$ and that it is closed under multiplication and separates points in $\PS{\infty}\times\Rp$. Similarly
    as before, via an application of the Stone-Weierstrass theorem,  this will imply that $\linearspan(D_{\Gdall}')$ is separating on $\sPM(\PS{\infty}\times \Rp)$.\par 
    To prove that $D_{\Gdall}'\subset \cbf(\PS{\infty}\times \Rp)$ we need only show that, for any $\hat\pi\in \PS{[p]}$, the  function $\Ind{\tilde\pi\vert_p = \hat\pi}$ is a continuous
    function of $\tilde\pi \in \PS{\infty}.$ Indeed, note that $\Ind{\tilde\pi\vert_p = \hat\pi}\equiv \Ind{\tilde\pi'\vert_p = \hat\pi}$ whenever
    $\tilde\pi\vert_p=\tilde\pi'\vert_p $; i.e. whenever $\tilde\pi'$ is in the $\PS{\infty}$-ball of radious $1/p$ around $\tilde \pi.$ (see Lemma 2.6 \cite{Bertoin2006}).\par
    That $D_{\Gdall}'$ separates points in $\PS{\infty}\times\Rp$ is a simple consequence of the fact that $D_h$ separates
    points in $\Rp$, and that the family of functions $\{\Ind{\tilde \pi\vert_p=\hat \pi}\colon p\geq 1, \hat\pi\in\PS{[p]}\}$ separates points in
    $\PS{\infty}$. We now prove that $D_{\Gdall}'$ is closed under multiplication. 
    Indeed, for $i\in\{1,2\}$ let $G^{(i)}\in D_{\Gdall}'$ be given by  $p^{(i)}\geq1, \hat \pi^{(i)}\in \PS{[p^{(i)}]}$, and $h^{(i)}\in D_h$. 
    Without loss of generality let us assume 
    $p^{(1)}\leq p^{(2)}$. Note that if $\restr{\hat\pi^{(2)}}{p^{(1)}}\neq \hat \pi^{(1)}$ then  $G^{(1)}G^{(2)}\equiv 0\in D_{\Gdall}'$. On the other hand, if $\restr{\pi^{(2)}}{p^{(1)}}= \pi^{(1)}$ then $(\Ind{\tilde\pi\vert_{p^{(1)}} = \hat\pi^{(1)}})(\Ind{\tilde\pi\vert_{p^{(2)}} = \hat\pi^{(2)}})=\Ind{\tilde\pi\vert_{p^{(2)}} = \hat\pi^{(2)}}$.
    In this case $G^{(1)}G^{(2)}=h^{(1)}(z)h^{(2)}(z)\Ind{\tilde\pi\vert_{p^{(2)}} = \hat\pi^{(2)}}$. Clearly $h^{(1)}h^{(2)}\in D_h$, so that $G^{(1)}G^{(2)}\in D_{\Gdall}'$. This concludes the proof. 
\end{proof}

For the proof of Theorem \ref{th:duality}, in particular for the stated uniqueness of solutions, we assume existence of solutions to the martingale
problems for both $(\Gall,D_{\Gall}')$ and $(\Gdall,D_{\Gdall}).$ The latter are proved using independent arguments in the first steps of the proofs of Theorems \ref{th:MVPsmhExist} and
\ref{th:FellerDual} in sections \ref{sec:nucons} and \ref{sec:dualConstruction} respectively. 
\begin{proof}[Proof of Theorem \ref{th:duality}]
	We prove \eqref{eq:duality} via an application of Theorem IV.4.11 in \cite{EthierKurtz86} (with $\alpha$ and $\beta$ therein set to $0$). Given \eqref{eq:dualGeneratorsEqual}, it only remains to 
	verify the $\Lp{1}$-boundedness conditions therein, which in our case will be verified once we prove the following:
	\begin{align}\label{eq:dualityLp1Conditions}
		\forall T\geq0;\quad    &\E\left[\sup_{0\leq s\leq t\leq T} \abs{G_{p,\phi,h}\left(\nu_s,\Pi_t,Z_t\right)}\right]<\infty,\\
		&\E\left[\sup_{0\leq s\leq t\leq T} \abs{\Gall G_{p,\phi,h}\left(\nu_s,\Pi_t,Z_t\right)}\right]<\infty, \nonumber\\
		&\E\left[\sup_{0\leq s\leq t\leq T} \abs{\Gdall G_{p,\phi,h}\left(\nu_s,\Pi_t,Z_t\right)}\right]<\infty, \nonumber
	\end{align}
	where $\stp{\nu_t}$ and $\stp{\Pi_t,Z_t}$ are independent solutions to the martingale problems for $(\Gall,D_{\Gall}')$ and $(\Gdall,D_{\Gdall})$ respectively. In fact
	\eqref{eq:dualityLp1Conditions} follows from the fact that $G_{p,\phi,h},\Gall G_{p,\phi,h},$ and $\Gdall G_{p,\phi,h}$ are all uniformly bounded. This is a consequence of Lemmas \ref{le:dualGeneratorBound} and \ref{le:operatorDuality}.  This proves \eqref{eq:duality}.\par
	Assumming existence of solutions to the martingale
	problems for both $(\Gall,D_{\Gall}')$ and $(\Gdall,D_{\Gdall})$, which is proved using independent arguments in  sections \ref{sec:nucons} and \ref{sec:dualConstruction} respectively.
	The stated uniqueness of solutions to the martingale problems for $(\Gall,D_{\Gall}')$ and $(\Gdall,D_\Gdall)$ both follow from \eqref{eq:duality} and  Lemma \ref{le:dualityDomainSeparates}, which provide the remaining necessary conditions for Proposition IV.4.7. in \cite{EthierKurtz86} to be applied in both cases.
\end{proof}



\section{Construction and Feller property of the SMH process}\label{sec:nucons}
We start this section with the Poissonian construction of the process $\stp{\nu_t}$ when the measure $\zeta^{-2}\Lambda(d\zeta)$ is finite.
A simple way to think
of this process
is as a time-changed and mass-scaled Dawson-Watanabe process
to which atoms are added at times of an independent Poison point process. The size
of such atoms will in general depend on the total population size at jump times.

Formally, for an initial condition $\nu$, let $\stp{\hat\nu_t}=\stp{\mu^{(DW)}_{c_1(t)}}$ 
where $\stp{\mu^{(DW)}_t}$ is a standard $\sigma$-Dawson-Watanabe process started at $\nu$;
and
$c_1(t) 
=\inf\lbr s\geq 0 \colon \int_0^s \frac{1}{\norm{\mu^{(DW)}_u}}du \geq t\rbr$
is the 1-SS Lamperti time change.
We will work with
several independent copies of $\stp{\hat{\nu}_t}$ started at different initial measures $\nu$;
let us denote them by $\stp{\hat{\nu}_t(\nu)}$ when necessary.\par
Let 
\begin{equation*}
\hat{\drift} \coloneqq \drift - \int_{(0,1/2]}\abs{\log(1-\zeta)}\frac{\Lambda(d\zeta)}{\zeta^2}
\end{equation*}
and let
 $\PPP$ be a PPP with intensity $\dif{t}\otimes \zeta^{-2}{\Lambda{(\dif{\zeta})}}$
 on $\Rp\times (0,1)$. For $t>0$
let 
$(t_1,\zeta_1),\dots,(t_K,\zeta_k)$ be the atoms of $\PPP$ such that their first coordinate is less than $t$,
ordered increasingly along the first (time) coordinate and
set $t_{K+1}=t$.  Then,
conditionally on $(t_i, \zeta_i)_{1\leq i \leq K}$, define $\nu_s$, $s\in[0,t)$,
recursively as 
\begin{align}\label{eq:nuFiniteDef}
\nu_s\coloneqq\begin{cases}
e^{\hat{\drift}s} \hat{\nu}_{s}\left(\nu\right) & \text{ if }0\leq s < t_1, \\
e^{\hat{\drift}( s-t_i)} \hat{\nu}_{s-t_i}\left(\nu_{t_i-} + \norm{\nu_{t_i-}}\frac{\zeta_i}{1-\zeta_i}\delta_{a_i}\right) & \text{ if } t_i \leq s < t_{i+1},
\end{cases}
\end{align}
{where the 
the locations of the new atoms $(a_i)_{1\leq i\leq K}$ are chosen independently 
according to $\nu_{t_{i}-}/\norm{\nu_{t_{i}-}}$ and, conditionally on its starting point, the trajectory of
$\left(\hat{\nu}_t\left(\nu_{t_i-} + \norm{\nu_{t_i-}}\frac{\zeta_i}{1-\zeta_i}\delta_{a_i}\right) \right)_{t\geq 0}$ 
is independent from  $(\nu_t)_{0\leq t \leq t_i}$. }\par 

\begin{lemma}\label{le:generatorLMVPfiniteCase}
The process $\stp{\nu_t}$ is {Markov}  with generator 
 $\mathcal{G}$ of the form $\eqref{eq:generatormvMp}$ on $D_{\Gdiff{\sigma}}$.
\end{lemma}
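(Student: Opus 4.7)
The plan is to first establish the Markov property and then identify the generator on $D_{\Gdiff{\sigma}}$. The Markov property of $\stp{\nu_t}$ is essentially immediate from the construction: the Poisson point process $\PPP$ has independent increments, the post-jump displacement $\norm{\nu_{t_i-}}\frac{\zeta_i}{1-\zeta_i}\delta_{a_i}$ depends only on the pre-jump state $\nu_{t_i-}$, and on each inter-jump interval $\stp{\nu_s}$ evolves as an independent copy of the scaled process $s\mapsto e^{\hat{\drift}s}\hat{\nu}_s(\nu_{t_i})$. Since, by Proposition \ref{prop:DWselfsimilar}, $\stp{\hat{\nu}_s}$ is a standard SMH process and the memoryless property of $\PPP$ preserves conditional independence across inter-jump intervals, the patched process $\stp{\nu_t}$ is Markov.

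To identify the generator, I would fix $F\in D_{\Gdiff{\sigma}}$ and compute the short-time expansion $\lim_{t\downarrow 0} t^{-1}(\E_\nu[F(\nu_t)]-F(\nu))$ by conditioning on the number of Poisson marks in $[0,t]\times(0,1)$. Since $\zeta^{-2}\Lambda(d\zeta)$ is finite, the probability of at least one mark is $t\int_{(0,1)}\zeta^{-2}\Lambda(d\zeta)+\lo(t)$, and two or more marks contribute $\bO(t^2)$. On the no-jump event, $\nu_s=e^{\hat{\drift}s}\hat{\nu}_s$; combining the generator of $\hat{\nu}_s$ on $D_{\Gdiff{\sigma}}$ (which is $\Gdiff{\sigma}$ by Proposition \ref{prop:DWselfsimilar}) with the Gateaux-derivative identity
\[
\frac{d}{ds}\bigg|_{s=0} F(e^{cs}\eta) = c\int_\typeS \eta(da)\,F'(\eta;a),
\]
which follows from the definition of $F'$ in the direction of the atom $\delta_a$ together with integration against $\eta(da)$, yields via Dynkin's formula applied to the semimartingale $e^{\hat{\drift}s}\hat{\nu}_s$ the continuous contribution $\hat{\drift}\int_\typeS \nu(da)F'(\nu;a)+\Gdiff{\sigma}F(\nu)$. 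On the single-jump event, conditioning on the mark $(s,\zeta)$ with $s\in[0,t]$, on the location $a\sim \nu/\norm{\nu}$, and exploiting the continuity of $\hat{\nu}$ to evaluate the state at $\nu$ rather than at $\nu_{s-}$ up to $\bO(t)$, produces the pure-jump contribution
\[
\int_{(0,1)}\frac{\Lambda(d\zeta)}{\zeta^2}\int_\typeS \frac{\nu(da)}{\norm{\nu}}\bigl\{F(\nu+\norm{\nu}\tfrac{\zeta}{1-\zeta}\delta_a)-F(\nu)\bigr\}.
\]

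To recognise the sum of the two pieces as $\Gall F(\nu)$, I would add and subtract the compensator $\norm{\nu}\abs{\log(1-\zeta)}\Ind{\zeta<1/2}F'(\nu;a)$ in the jump integral, rewriting it as $\Gjump{\Lambda}F(\nu)+\int_\typeS \nu(da)F'(\nu;a)\int_{(0,1/2]}\abs{\log(1-\zeta)}\zeta^{-2}\Lambda(d\zeta)$. By the definition $\hat{\drift}=\drift-\int_{(0,1/2]}\abs{\log(1-\zeta)}\zeta^{-2}\Lambda(d\zeta)$, the residual combines with $\hat{\drift}\int\nu(da)F'(\nu;a)$ to yield exactly $\drift\int\nu(da)F'(\nu;a)=\Gdrift{\drift}F(\nu)$; summing the three pieces then produces $\Gall F(\nu)$.

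The main obstacle I anticipate is making the short-time expansion uniform on the polynomial-growth domain $D_{\Gdiff{\sigma}}$. The bounds $\abs{F}+\abs{F'}+\abs{F''}\leq C(1+\norm{\nu}^k)$ have to be integrated against moments of $\norm{\hat{\nu}_s}$, which is a Feller diffusion (Proposition \ref{prop:DWselfsimilar}) whose exponential moments are already available (and, for the general case further on, are supplied by Lemma \ref{le:LevyExponentialMoments}). These estimates are what justify the interchange of the limit with the expectation over $\hat{\nu}$, with the location integral against $\nu(da)/\norm{\nu}$, and with the $\Lambda$-integral. The assumed finiteness of $\zeta^{-2}\Lambda(d\zeta)$ bypasses the delicate infinite-activity compensation issues, so the $\hat{\drift}\to\drift$ reconciliation reduces to the purely algebraic rewriting described above.
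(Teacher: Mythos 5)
Your proposal is correct and follows essentially the same route as the paper: identify the continuous part between jumps as $\Gdrift{\hat\drift}+\Gdiff{\sigma}$, add the finite-rate jump integral $\int_{(0,1)}\zeta^{-2}\Lambda(d\zeta)\int\frac{\nu(da)}{\norm{\nu}}\{F(\nu+\norm{\nu}\tfrac{\zeta}{1-\zeta}\delta_a)-F(\nu)\}$, and absorb the compensator into the drift via the definition of $\hat\drift$ to recover $\Gdrift{\drift}+\Gdiff{\sigma}+\Gjump{\Lambda}$. The only difference is that the paper outsources the jump-addition step to Theorem 2.4 of Sawyer (1970) for perturbations of ``Kac type,'' whereas you carry out the first-jump short-time expansion by hand and explicitly flag the moment estimates needed on the polynomial-growth domain $D_{\Gdiff{\sigma}}$ --- a legitimate and, if anything, more self-contained substitute.
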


\begin{proof}
First, when $\Lambda\equiv 0$, i.e. for the Markov process $\stp{e^{\hat\drift t}\hat\nu_t}$, a simple
computation yields 
$$
\der{t}\bigg\vert_{t=0}\E_{\nu}[F(e^{\hat{\drift}t}\hat{\nu}_t)]  = \Gdrift{\hat\drift}F(\nu) +  \Gdiff{\sigma}F(\nu).
$$

To add the jumps, and to compute the resulting generator, we use Theorem 2.4 in
\cite{Sawyer1970}.
For this, we set
(in their notation) $\phi_t=C_\Lambda t$ with $C_\Lambda\coloneqq \int_{[0,1]}\zeta^{-2}\Lambda(d\zeta)$ and the transition kernel $K(\nu,\cdot)$ 
given by, for $F\in\bbf(\sM(\typeS))$,
$$
K(\nu, F)=\begin{cases}
\int_{{(0,1)}}\zeta^{-2} \frac{\Lambda(d\zeta)}{C_\Lambda} \int_\typeS \frac{\nu}{\norm{\nu}}(da) F(\nu + \norm{\nu}\frac{\zeta}{1-\zeta}\delta_{a}) & \text{ whenever }\nu\neq 0,\\
F(0) & \text{ otherwise.}
\end{cases}
$$ 
Then, $\phi_t$ being of \say{Kac type}, according to Theorem 2.4 (and Example 1) in \cite{Sawyer1970}, 
the generator of the process in \eqref{eq:nuFiniteDef}
is given by
\begin{align*}
\der{t}\bigg\vert_{t=0}\E[F(e^{\hat{\drift}t}\hat{\nu}_t)]  &= \Gdrift{\hat\drift}F(\nu) +  \Gdiff{\sigma}F(\nu) \\
&+\int_{{(0,1)}}\zeta^{-2} \Lambda(d\zeta)\int_\typeS \frac{\nu}{\norm{\nu}}(da) \left\{F(\nu + \norm{\nu}\frac{\zeta}{1-\zeta}\delta_{a})
 - F(\nu)\right\}\\
 &=\Gdrift{\drift}F(\nu) +  \Gdiff{\sigma}F(\nu) + \Gjump{\Lambda}F(\nu)
\end{align*}
with domain $\D(\Gdiff{\sigma})$.
\end{proof}



Let us now consider the construction when the measure $\zeta^{-2}\Lambda(d\zeta)$ is infinite.
We choose a particular sequence of processes $\left\{\stp{\nun_t}\right\}_n$
that will weakly approximate a process $\stp{\nu_t}$ with generator $\Gall$ as in \eqref{eq:generatormvMp}, the latter will 
be proven
to be a Feller process. 
In the following Proposition \ref{th:weakLimitMartingale}, and in order to simplify technical arguments, here we only focus on the construction of the process $\stp{\nu_t}$ for which we impose stronger conditions than just the convergence of the generators.

In the following we set $\Lambda(\{0\})=\frac{\sigma^2}{2}$. Also, throughout this section,
we will work with the jumping measure $\Ind{0<\zeta\le 1/2}\Lambda(d\zeta)$ instead of $\Lambda(d\zeta)$ for the
limit process; while
the approximating processes $\stp{\nun_t}$ will have jumping measure 
\begin{equation}\label{Lambdan}
\Lambda_n(d\zeta)=\Ind{1/n<\zeta\le 1/2}\Lambda(d\zeta).
\end{equation} 
Removing the atoms in which $\zeta\ge 1/2$ ensures that the jumps of $\stp{\norm{\nun_t}}$ are bounded,
which in turn 
simplifies the proof of tightness for the family $\left\{\stp{\nun_t}\right\}_{n}$. This imposes
no loss of generality for the construction of the process $\stp{\nu_t}$ with
generator of the general form $\Gall$ since the finitely-many large jumps that
occur with intensity $\Ind{\zeta\geq 1/2}\Lambda(d\zeta)$ can always be \say{added back} via the
same argument as in Lemma \ref{le:generatorLMVPfiniteCase}.\par

\begin{proposition}\label{th:weakLimitMartingale}
Let
$\stp{\nun_t}$ be constructed  with $\Lambda_n$ as in \eqref{Lambdan}
where, furthermore, $\nun_0\distto\nu_0$ and 
$\sup_n\E\left[\norm{\nun_0}^p\right]\break<\infty$ for every $p\geq1$. 
Then there exists a process $\stp{\nu_t}$ such that, as $n\to\infty$,
\begin{equation*}
\stp{\nun_t}\distto \stp{\nu_t}
\end{equation*}
in the Skorohod topology on $D([0,\infty); \sM(\typeS))$.
Furthermore, the process $\stp{\nu_t}$ is a solution to the martingale problem for $(\Gall,D_\Gall)$ and is a Feller process. 
Moreover, $\stp{\nu_t/\norm{\nu_t}}$ is a $(\Lambda+\sigma^2/2\delta_0)$-Fleming-Viot process, 
and $\stp{\log\norm{\nu_t}}$ a Lévy process with triplet $(\driftF+\drift, \sigma, \LevyMeasure)$.
\end{proposition}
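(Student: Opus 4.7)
The plan is to proceed in four steps: tightness of $\{\stp{\nun_t}\}_n$, identification of a subsequential weak limit as a martingale-problem solution for $(\Gall, D_\Gall)$, uniqueness via duality, and finally the Feller property together with the identification of the two marginals.

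Tightness rests on a uniform moment bound and a path-regularity criterion. Since $\typeS$ is compact Polish, the level sets $\sM_r(\typeS)=\{\mu:\norm{\mu}\leq r\}$ are weakly compact, so compact containment reduces to bounding $\sup_n\P(\sup_{t\leq T}\norm{\nun_t}>r)$ uniformly in $n$. A direct reading of the Poissonian construction \eqref{eq:nuFiniteDef} shows that $\stp{\log\norm{\nun_t}}$ is the Lévy process whose characteristic triplet is that of \eqref{eq:LevyCharacteristic} but with $\Lambda$ replaced by $\Lambda_n$; since $\Lambda_n$ has no mass on $[1/2,1)$ and is dominated by $\Lambda$, Lemma \ref{le:LevyExponentialMoments} yields a uniform-in-$n$ bound on $\E[\sup_{t\leq T}\norm{\nun_t}^q]$ given the hypothesis $\sup_n\E[\norm{\nun_0}^q]<\infty$. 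Path regularity then follows from the Aldous--Rebolledo criterion applied to the real-valued processes $\stp{\mint{\phi}{\nun_t}}$, $\phi\in\cbf(\typeS)$, using the uniform bound \eqref{eq:GallBound} on $\Gall_n$; combining with a countable separating family and Jakubowski's criterion gives tightness in $D([0,\infty);\sM(\typeS))$.

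To identify a weak subsequential limit $\stp{\nu_t}$ as a martingale-problem solution for $(\Gall, D_\Gall)$, I start from the fact that by Lemma \ref{le:generatorLMVPfiniteCase} each $\stp{\nun_t}$ solves the martingale problem for $(\Gall_n, D_{\Gdiff{\sigma}})$, and $D_\Gall\subset D_{\Gdiff{\sigma}}$ (elements of $D_\Gall$ have $\cdf{2}$-regularity with uniformly bounded derivatives). By Lemma \ref{le:boundOnGLambda}, for each $F\in D_\Gall$ one has $\norm{\Gall_n F-\Gall F}_\infty\leq C\norm{\Lambda_n-\Ind{(0,1/2]}\Lambda}_{TV}=C\Lambda((0,1/n])\to 0$ since $\Lambda$ is finite, while $\Gall F$ is uniformly bounded. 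A standard continuous-mapping plus dominated-convergence argument then allows passage to the limit in the defining identity
\begin{equation*}
\E\bigl[g(\nun)\bigl(F(\nun_t)-F(\nun_s)-\textstyle\int_s^t \Gall_n F(\nun_u)\,du\bigr)\bigr]=0
\end{equation*}
for $0\leq s<t$ and bounded continuous $g$ on the trajectory up to $s$, yielding that $\stp{\nu_t}$ solves the martingale problem for $(\Gall, D_\Gall)$.

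Since $D_\Gall'\subset D_\Gall$ by Corollary \ref{cor:D_Gall'<D_Gall}, the limit also solves the martingale problem on $D_\Gall'$, whose solutions are unique by Theorem \ref{th:duality}. This upgrades convergence from subsequential to full and produces a Markov family $(\P_{\nu_0})_{\nu_0\in\sM(\typeS)}$, with continuity of $\nu_0\mapsto\P_{\nu_0}$ inherited from uniqueness combined with tightness across initial conditions. The Feller property then follows from $C_0$-preservation---noting that $\norm{\nu_t}\to\infty$ in probability as $\norm{\nu_0}\to\infty$ since $\log\norm{\nu_t}-\log\norm{\nu_0}$ is an autonomous Lévy increment with law independent of $\nu_0$---and from strong continuity of the semigroup (coming from right-continuity of paths and the uniform bound on $\Gall$). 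Finally, the marginals are identified from the computation of Lemma \ref{le:operatorDuality}: for test functions $F(\nu)=h(\norm{\nu})\mint{\phi_\pi}{(\nu/\norm{\nu})^{\otimes p}}\in D_\Gall'$, the part of $\Gall F$ independent of $h'$ and $h''$ gives the $(\Lambda+(\sigma^2/2)\delta_0)$-Fleming--Viot generator at $\rho=\nu/\norm{\nu}$, while the part independent of the choice of $\phi_\pi$ (i.e.\ with $\phi_\pi\equiv 1$) reproduces the Lévy generator at $\log\norm{\nu}$ with triplet $(\driftF+\drift,\sigma,\LevyMeasure)$; the pushforward $\LevyMeasure$ emerges because a jump $\nu\to\nu+\norm{\nu}\zeta/(1-\zeta)\delta_a$ shifts $\log\norm{\nu}$ by $-\log(1-\zeta)$ with intensity $\zeta^{-2}\Lambda(d\zeta)$. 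The main technical obstacles I expect are the $C_0$-preservation step in the Feller verification, and the careful bookkeeping of the compensator $\hat\drift$ and the truncation $\Lambda_n$ when matching the Lévy-triplet.
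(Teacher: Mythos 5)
Your proposal follows essentially the same route as the paper's proof: compact containment via the exponential-Lévy moment bound of Lemma \ref{le:LevyExponentialMoments}, path regularity via Aldous--Rebolledo using the uniform generator bound of Lemma \ref{le:boundOnGLambda}, identification of the limit through the total-variation convergence $\norm{\Lambda_n-\Lambda}_{TV}\to0$ (the paper packages this step as Lemma IV.5.1 in Ethier--Kurtz), uniqueness via Corollary \ref{cor:D_Gall'<D_Gall} and the duality Theorem \ref{th:duality}, and identification of the two marginals by evaluating $\Gall$ on test functions depending only on $\norm{\nu}$ or only on $\nu/\norm{\nu}$. The only notable divergences are cosmetic: you run Aldous--Rebolledo on the linear functionals $\mint{\phi}{\nun_t}$ with Jakubowski's criterion where the paper uses polynomials with Theorem III.9.1 of Ethier--Kurtz, and you argue the Feller property via $C_0$-preservation where the paper invokes Foucart's Feller result for the $\Lambda$-Fleming--Viot coordinate and re-runs the tightness/identification argument over varying initial conditions.
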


\begin{proof}
We proceed in multiple steps: proving existence of solutions
to the $\Gall$-martingale problem through tightness of the 
family of processes $\set{\stp{\nun_t}}_{n}$ in the Skorohod topology for  $D([0,\infty); \sM\left(\typeS\right))$,
and the convergence of the approximating generators  $\Gall_n$ to $\Gall$; 
identifying the limit; and showing that it is Feller. 
The proof
of tightness rests on the well-known Aldous-Rebolledo criterion  (e.g. Theorem 1.17 in \cite{etheridge2000introduction}), whereas the characterization and the properties of the limiting
process will be a consequence of the duality relation described in Theorem \ref{th:duality}.\par
{\bf Step 1: Tightness.}
The proof rests on  Theorem III.9.1 in \cite{EthierKurtz86}.
Let us first prove that
\begin{equation}\label{eq:compactContainmentBalls}
\limsup_{n\geq 1}\P\left(\sup_{0\leq t\leq T} \norm{\nun_t} \geq r\right)  \leq \frac{1}{r}\E\left[\sup_{0\leq t\leq T} \norm{\nun_t}\right] 
\toas{r\to\infty}0
\end{equation}
which implies the compact containment condition (eq. (9.1) therein) for our processes
since,
$\typeS$ being compact, the space $\sM\left(\typeS\right)$ is locally compact. In fact
 the balls of radius $r$,
 $
 \sM_r\left(\typeS\right)=\lbr \mu\in\sM\left(\typeS\right)\colon \mint{1}{\mu}\leq r\rbr,
 $
 are compact (see e.g. Theorem 1.14 in \cite{ZenghuLi2022}). \par
{Using Lemma \ref{le:generatorLMVPfiniteCase} and plugging $F(\nu)=h(\norm{\nu})$ in \eqref{eq:generatormvMp} (take e.g.  $\phi\equiv1$
and $z=1$ in \eqref{eq:dualGeneratorsEqual} and \eqref{eq:dualGenerator})}
we obtain that $\stp{\norm{\nun_t}}=\stp{e^{\xin_t}}$ in distribution where  $\stp{\xin_t}$ is the Lévy process
$$\xin_t=\log\left(\norm{\nun_0}\right)+(\drift+\driftF)t + M^{(n)}_t.
$$
with $ M^{(n)}_t$ defined as $M_t$ in  Lemma \ref{le:LevyExponentialMoments} but replacing $\Lambda$ therein by $\Lambda_n$.
Then \eqref{eq:compactContainmentBalls} is an easy consequence of Lemma \ref{le:LevyExponentialMoments}.\par

In view of Theorem III.9.1 in \cite{EthierKurtz86},
and the fact that the polynomials $\polys(\sM(\typeS))$ are dense in $\cbf(\sM(\typeS))$ for 
the topology of uniform convergence in compact sets; 
it remains to prove that $\left\{\stp{F(\nun_t)}\right\}_{n}$ is relatively
compact for every $F\in \polys(\sM(\typeS))\subset D_\Gall$, where we recall that $D_\Gall$ is given in \eqref{eq:coreSMH}. 
For the latter, in turn, we will prove the conditions of the Aldous-Rebolledo criterion.

First, the tightness of $\{F(\nun_t)\}_{n}$ for fixed $t$ follows directly from 
\eqref{eq:compactContainmentBalls}.\par
Second, by Lemma \ref{le:generatorLMVPfiniteCase}, $N_t^{(n)} \coloneqq F(\nun_t) - \int_0^t \Gall_n  F (\nun_s)ds$ defines a martingale
so that the finite variation process of $\left(F(\nun_t)\right)_{t\geq0}$ is given by
$$
V^{(n)}_t = \int_0^t \Gall_n F (\nun_s)ds.
$$
On the other hand, the compensator of the process $\stp{(N_t^{(n)})^2}$ is given by
$$
\QV{N^{(n)}}_t=\int_0^t \CDC_n F(\nun_s) ds
$$
where $\CDC_n$ is the carré du champ operator associated to $\Gall_n$. The latter is given by
$$
\CDC_n F = \Gall_n F^2 - 2 F \Gall_n F
$$
(see e.g. section 1.2.2 in \cite{JoffeMetivier1986}). The remaining 
conditions of the Aldous-Rebolledo criterion on $\stp{V^{(n)}_t}$ and $\stp{\QV{N^{(n)}}_t}$ (which are given by \eqref{eq:condii.1Tight} and \eqref{eq:condii.2Tight} below) will follow
once we prove
that for
any $F\in \polys(\sM(\typeS))$, $\delta>0$, and any stopping time $0\leq\tau_n\leq T$,

\begin{equation}\label{eq:nuTightnessCondition}
\limsup_{n\to\infty} \sup_{0\leq \theta \leq \delta} \E\[\int_{\tau_n}^{\tau_n+\theta} 
\left(\abs{F(\nun_s)}\vee 1\right) \abs{\Gall_n F(\nun_s)} ds \] \leq  \delta C 
\end{equation}
for some $C>0$ depending only on $F$.
Indeed, 
by Markov's inequality we obtain, on the one hand,
\begin{align*}
    &\limsup_{n\to\infty}\sup_{0\leq \theta\leq \delta}\P\left( \abs{V^{(n)}_{\tau_n+\theta} - V^{(n)}_{\tau_n}} >\eps\right)\\
    &\leq \limsup_{n\to\infty} \sup_{0\leq \theta \leq \delta} \E\[\int_{\tau_n}^{\tau_n+\theta} 
\abs{\Gall_n F(\nun_s)} ds \] \leq   \frac{\delta C}{\eps}.
\end{align*}
So that, taking $\delta=\eps^2/C$, 
\eqref{eq:nuTightnessCondition} implies 
\begin{equation}\label{eq:condii.1Tight}
 \sup_{n\geq n_0}\sup_{0\leq \theta \leq \delta} \P\[\abs{V^{(n)}_{\tau_n +\theta} - V^{(n)}_{\tau_n}}>\eps\]\leq\eps.
 \end{equation}
On the other hand, 
for any $\delta>0$,
\begin{align*}
&\E\left [ \abs{\QV{N^{(n)}}_{\tau_n+\delta} -  \QV{N^{(n)}}_{\tau_n}} \right ]\\
&\leq \E\left [\int_{\tau_n}^{\tau_n+\delta} \abs{\Gall_n F^2(\nun_s)} ds\right] +
2\E\left[
\int_{\tau_n}^{\tau_n+\delta} \abs{ F(\nun_s) \Gall_n F(\nun_s)} ds\right ],
\end{align*}
so that now \eqref{eq:nuTightnessCondition} applied to each term in the r.h.s. above,
and choosing $\delta$ adequately,
 yields 
 \begin{equation}\label{eq:condii.2Tight}
 \sup_{n\geq n_0}\sup_{0\leq \theta \leq \delta} \P\[\abs{\QV{N^{(n)}}_{\tau_n +\theta} - \QV{N^{(n)}}_{\tau_n}}>\eps\]\leq\eps
 \end{equation}
 again through Markov's inequality.\par
It remains to prove \eqref{eq:nuTightnessCondition}, but this is a simple consequence of the fact that $F\in D_\Gall$ is bounded, together
with the uniform bound 
$$
\Gall_n F \leq C \norm{\Lambda_n}_{TV} \leq C\norm{\Lambda}_{TV},
$$
which follos from Lemma \ref{le:boundOnGLambda}.
%

{\bf Step 2: Limiting
martingale problem (existence of solutions).}
Assume that, along a subsequence $\left\{n_i\right\}_{i\geq1}$ (that we will simply denote by $n$ to ease notation) we
have 
$$
\stp{\nun_t}\disttoas{n\to\infty}\stp{\nu_t}
$$
in $D([0,\infty), \sM\left(\typeS\right))$ for some process $\stp{\nu_t}$. 
Lemma \ref{le:boundOnGLambda} provides the necessary conditions for Lemma IV.5.1 in \cite{EthierKurtz86} to be applied and conclude that
$\stp{\nu_t}$ is a solution to the martingale problem for $(\Gall,D_{\Gall}).$ By Corollary \ref{cor:D_Gall'<D_Gall} it is also a solution to the martingale problem for $(\Gall,D_{\Gall}').$ 

\par 

 {\bf Step 3: Identification of the limit and Feller property.}
    The identification of the limit along any weakly convergent subsequence
    is a consequence of the previous Step 2 together with Theorem \ref{th:duality} where uniqueness of solutions to the martingale problem for $\Gall$ is proved (assuming existence of solutions to the martingale problem for $(\Gdall,D_\Gdall)$ which is independently proved in section \ref{sec:dualConstruction}). 
    \par
    For the characterization of each of the coordinates in the limit,
    observe that
computing $\Gall F$ for functions of the form $F(\nu)=h(\log(\norm{\nu}))$ with $h\in\cKdf{2}(\R)$, and 
$F(\nu)=H_\pi^{(\phi)}(\nu)=\mint{\phi}{\left(\frac{\nu}{\norm{\nu}}\right)^{\otimes p}}$ with $ \phi\in\cbf(\typeS^p)$ respectively
--using e.g. \eqref{eq:duality} and \eqref{eq:dualGenerator}--; we obtain
the generators of the Lévy process with characteristic triplet $(\driftF+\drift,\sigma,\LevyMeasure)$ and the $(\Lambda+\sigma^2\delta_0)$-Fleming-Viot process 
 respectively. The result then follows from the uniqueness of solutions to their corresponding martingale problems.\par

We now prove that 
$\stp{\nu_t}$ is Feller. First note that 
by Proposition 3.3 in \cite{Foucart2012} the Fleming-Viot process 
$\stp{\rho_t}=\stp{\nu_t/\norm{\nu_t}}$ is Feller, as well as the Lévy process $\stp{\xi_t}=\stp{\log\norm{\nu_t}}$.
Then, to prove the continuity in probability  of $\stp{\nu_t}$ at $t=0$, 
note that $\rho_t\probtoas{t\to0}\rho_0$ and
$\xi_t\probtoas{t\to0}\xi_0$ imply $(\rho_t,\xi_t)\probtoas{t\to0}(\rho_0,\xi_0)$,
so that $\nu_t=e^{\xi_t}\rho_t\probtoas{t\to0}\nu_0$. We now prove the weak continuity of $\stp{\nu_t}$ as a function of the initial state $\nu_0$.
Consider a sequence of processes $\left\{\stp{\nun_t}\right\}_{n}$ started at $\nun_0$ where $\nun_0\toas{n\to\infty}\nu_0$. 
By the same argument as in Step 1 (Tightness) above, with $\Lambda_n$ therein set to $\Lambda_n=\Lambda$, 
the sequence $\left\{\stp{\nun_t}\right\}_{n}$ is tight. Furthermore, now using Step 2 (limiting martingale problem), the limit along any weakly-convergent subsequence of $\left\{\stp{\nun_t}\right\}_{n}$ is also identified in this case 
to be the unique solution to the martingale problem for $\Gall$ started at $\nu_0$. Then $\stp{\nun_t}\disttoas{n\to\infty}\stp{\nu_t}$ follows.\\

Finally, the fact that $\stp{\nu_t}$ is SMH follows from verifying item {\it\ref{itTh:smhGenerator}} in Proposition \ref{prop:smhssGenerators}. Given 
\eqref{eq:DWgenIsSS}, we only do this for the term $\Gjump{\Lambda}$. This is simply given by 
observing that, for $b>0$, we have $( \SSS_{b}F)'(\nu;a)=bF'(b\nu;a)$ so that 
\begin{align*}
\Gjump{\Lambda}\SSS_{b}F(\nu)&=\int_{\typeS} \frac{\nu(da)}{\norm{\nu}} \int_{{(0,1)}}\frac{\Lambda(d\zeta)}{\zeta^2} \bigg\{ F\left(b\nu + b\norm{\nu}\frac{\zeta}{1-\zeta} \delta_a\right)\\
      &\quad\quad- F(b\nu) - b\norm{\nu}(\abs{\log(1-\zeta)}\Ind{\zeta<1/2}) F'(b\nu; a)\bigg\}.
\end{align*}
Then
$$
\SSS_{b^{-1}}\Gjump{\Lambda}\SSS_{b}F(\nu)=\Gjump{\Lambda}\SSS_{b}F(b^{-1}\nu)=\Gjump{\Lambda}F(\nu).
$$
\end{proof}

We now gather results and prove Theorem \ref{th:MVPsmhExist}.
\begin{proof}[Proof of Theorem \ref{th:MVPsmhExist}]
The existence of the Feller process $\stp{\nu_t}$ with generator of the form $\Gall$ in $D_\Gall$, and the characterization and MAP property of $(\rho_t,\xi_t)$,
are given in Proposition \ref{th:weakLimitMartingale}. The fact that $D_\Gall'\subset D_\Gall$ is proved in Corollary \ref{cor:D_Gall'<D_Gall}. Uniqueness
of solutions to the martingale problem for $(\Gall, D_\Gall)$ is given by Theorem \ref{th:duality}.
 \end{proof}

\begin{remark}
We conjecture that Theorems \ref{th:MVPsmhExist} and \ref{th:mvpLamperti} could be generalized in the following two directions using the same arguments in the proof.
\begin{itemize}
\item[i)] A general positive SS Markov process, see \cite{BertoinKortchemski2016}, can be attained for the total mass  process $\stp{\norm{\mu_t}}$
by adding negative jumps of the form $x\to x\zeta$, $0<\zeta<1,$ with intensity $\Theta^{-}(d\zeta)$,
the {pushforward of a Lévy measure $\Pi$ on $(-\infty,0)$ (i.e. a measure satisfying $\int_{-\infty}^0( \zeta^2\wedge 1) \LevyMeasure(d\zeta)<\infty$) under the
transformation $\zeta \to e^\zeta$. 
A simple way to do this is to 
allow for the underlying SMH process $\stp{\nu_t}$ to have jumps of the form $\nu\to \nu \zeta$ with intensity $\Theta^{-}(d\zeta)$. 
In this sense,
 a term of the form
$$
\Gjumpneg{\Theta^{-}}F(\nu) = \int_{(0,1)} \Theta^{-}(d\zeta)\left\{
F\left(\nu \zeta\right)-F(\nu)
\right\}
$$
is added to its generator.}
Note that adding negative jumps in this way does not change the dynamics of the corresponding frequency process,
so that the arguments leading to the construction of the SMH process $\stp{\nu_t}$ can be easily generalized in this direction.
\item[ii)]  It is possible to extend our results to the
$\Xi$-Fleming-Viot case with $\int_{[0,1]^\N} \sum_{k=1}^\infty \zeta_k^2\Xi(d\zeta)<\infty$. 
 In this case 
the generator of $\stp{\nu_t}$ is given by updating the jumping part $\Gjump{}$ as
\begin{align*}
\Gjump{\Xi}F(\nu)
&=\int_{\left(\typeS\right)^\N} \left(\frac{\nu}{\norm{\nu}}\right)^{\otimes\N}(da)
  \int_{{(0,1)}^\N}\Xi(d\zeta)  \bigg\{ F\left(\nu + \norm{\nu} \sum_{i=1}^\infty \frac{\zeta_i}{1-\norm{\zeta}_{\lp{1}}} \delta_{a_i}\right) - F(\nu)\\
&\qquad\qquad\qquad  - \norm{\nu}\abs{\log(1-\norm{\zeta}_{\lp{1}})}\Ind{\norm{\zeta}_{\lp{1}}<1/2} F'(\nu; a)\bigg\}.
\end{align*}
Heuristically, jumps are of the form
$$
\nu \to \nu + \sum_{k=1}^\infty \frac{\zeta_i}{1-\norm{\zeta}_{\lp{1}}} \delta_{a_k}
$$
where the positions of the new atoms $(a_k)_{k\geq 0}$ are 
i.i.d with distribution $\frac{\nu_{t-}}{\norm{\nu_{t-}}}$ and
the atom sizes $\zeta=(\zeta_1, \zeta_2, \dots)$, satisfying $\norm{\zeta}_{\lp{1}}< 1$ arrive with intensity $dt\otimes\Xi(d\zeta)$.

The process $\stp{\rho_t}$ becomes the $\Xi$-Fleming-Viot process and $\stp{\xi_t}$ is a Lévy process with compensated jumps given by
$$
\int_0^t\int_0^1 \abs{\log(1-\norm{\zeta}_{\lp{1}})} \tilde{\PPP}(ds, d\zeta)
$$
where $\tilde{\PPP}(ds, d\zeta) = \PPP(ds, \zeta) - \Ind{\norm{\zeta}_{\lp{1}}<1/2}ds\otimes\Xi(d\zeta)$ 
and $\PPP$ is a Poisson point process on $\R^+\times (0,1)^\N$ with intensity $ds\otimes\Xi(d\zeta)$.
\end{itemize}\end{remark}
\par

\section{Construction of the process $\stp{\Pi_t,Z_t}$.}\label{sec:dualConstruction}

 For each $n\geq 2$, let  $\tilde\Lambda_n$ be the measure $\Lambda$ restricted to $(\frac{1}{n+1}, \frac{1}{n})$ and
 consider independent Poisson point processes $\mathcal{\tilde P}_n$ on $(0,\infty) \times (0,1)$ with respective intensity measures $dt \otimes \zeta^{-2}\tilde\Lambda_n(d\zeta)$.
 Observe that all $\mathcal{\tilde P}_n$ have finitely many atoms  in any bounded time interval.
     For each $n$, we can construct a coalescent process $\stp{\Pi_t^{(n)}}$ in $\PS{\infty}$ with Kingman's dynamics of intensity $\sigma^2$, and 
     with multiple merging of lineages via independent coin tossing, with respective probabilities and times given by the atoms of the joint Poisson point processes $\cup_{i=1}^n \mathcal{\tilde P}_i$. 
     Of course each coalescent process  $\stp{\Pi_t^{(n)}}$ has characteristic measure $\sigma^2\delta_0+\Lambda_n$ where $\Lambda_n=\sum_{i=2}^n\tilde\Lambda_i$ is the measure $\Lambda$ restricted to $(\frac1n,\frac12)$.
     Note that, as we did before in the construction of the process $\stp{\nu_t}$, here we are leaving out the finitely-many ``big'' jumps of $\zeta^{-2}\Lambda(d\zeta)$ in $[1/2,1)$. As before, these can be added back to the process $\stp{\Pi_t,Z_t}$
     using similar arguments as for $\stp{\nu_t}$ (see e.g. the proof of Lemma \ref{le:generatorLMVPfiniteCase}).
     On the other hand, we also assume that the Kingman's components of all $\stp{\Pi_t^{(n)}}$  are driven by the same Poisson point measure on $\Rp\times\PS{\infty}$, following the Poissonian construction of general coalescent processes introduced in \cite{Bertoin2006}.
     
\begin{lemma}\label{lem:cauchy}
The sequence $\{\Pi^{(n)}\}_n$ is Cauchy  in $L^3$ in the following sense.
	For any $T\geq 0$, 
	$$
	\lim_{n,m\to\infty}\E\left[\sup_{0\leq t\leq T} \dist_{\PS{\infty}}(\Pi_t^{(n)}, \Pi_t^{(m)})^3\right] =0,
	$$
		where $\dist_{\PS{\infty}}$ is the metric in $\PS{\infty}$ defined by $\dist_{\PS{\infty}}(\pi,\hat\pi)=1/\max\left\{p\colon \restr{\pi}{[p]}=\restr{\hat\pi}{[p]}\right\}$.
\end{lemma}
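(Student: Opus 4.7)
The plan is to use the natural coupling: for $m>n$, the processes $\stp{\Pi^{(n)}_t}$ and $\stp{\Pi^{(m)}_t}$ share the same Kingman component and the same coin-flip atoms from $\tilde{\mathcal P}_2,\ldots,\tilde{\mathcal P}_n$, while $\stp{\Pi^{(m)}_t}$ has, in addition, the atoms of $\tilde{\mathcal P}_{n+1},\ldots,\tilde{\mathcal P}_m$. These extra atoms form a Poisson point process of intensity $\zeta^{-2}\Ind{\zeta\in A_{n,m}}\Lambda(d\zeta)$, where $A_{n,m}\coloneqq\bigcup_{i=n+1}^{m}(1/(i+1),1/i]\subset(0,1/(n+1)]$. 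Since $\Lambda$ is finite and $\Lambda(\{0\})=0$, one has $\Lambda(A_{n,m})\to 0$ as $n\to\infty$, uniformly in $m>n$; this decay is what will drive the convergence.

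For each $p\geq 2$ I would introduce the disagreement time $T^p_{n,m}\coloneqq\inf\{t\geq 0:\restr{\Pi^{(n)}_t}{[p]}\neq\restr{\Pi^{(m)}_t}{[p]}\}$. On $[0,T^p_{n,m})$ the two $[p]$-restrictions coincide and share the same (at most $p$) blocks containing an element of $[p]$; by the consistency of the $\Lambda$-coalescent under restriction, standard from the Poissonian construction of \cite{Bertoin2006} and realised by attaching the participation coins to blocks via their smallest-element representative, the common atoms therefore act identically on both restrictions. Hence a disagreement can only be created by an extra atom $(t,\zeta)$ with $\zeta\in A_{n,m}$ for which at least two of the (at most $p$) relevant blocks of $\Pi^{(m)}_{t-}$ both participate in the associated coin-flip merger. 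The union bound shows that this conditional probability is bounded pathwise by $\binom{p}{2}\zeta^2$, and Campbell's formula then yields
\[
\P\!\left(T^p_{n,m}\leq T\right)\ \leq\ T\binom{p}{2}\int_{A_{n,m}}\zeta^2\cdot\zeta^{-2}\Lambda(d\zeta)\ =\ T\binom{p}{2}\Lambda(A_{n,m}).
\]

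To conclude I would use the pointwise bound $\dist_{\PS{\infty}}\leq 1$ on $\{T^p_{n,m}\leq T\}$ together with $\sup_{0\leq t\leq T}\dist_{\PS{\infty}}(\Pi^{(n)}_t,\Pi^{(m)}_t)\leq 1/p$ on $\{T^p_{n,m}>T\}$; splitting the expectation gives
\[
\E\!\left[\sup_{0\leq t\leq T}\dist_{\PS{\infty}}(\Pi^{(n)}_t,\Pi^{(m)}_t)^3\right]\ \leq\ \frac{1}{p^3}+T\binom{p}{2}\Lambda(A_{n,m}).
\]
Choosing $p=p(n,m)=\lfloor\Lambda(A_{n,m})^{-1/5}\rfloor$, valid once $n$ is large enough so that $\Lambda(A_{n,m})<1$, makes both terms tend to $0$ as $n,m\to\infty$ and finishes the proof.

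The most delicate point is justifying that the common atoms act identically on the two $[p]$-restrictions during $[0,T^p_{n,m})$; this is the consistency of $\Lambda$-coalescents under restriction and depends on selecting a version of the coupling in which the coin labels are attached to blocks via a canonical representative (e.g.\ the smallest element). The remaining ingredients---the union-bound estimate $\binom{p}{2}\zeta^2$ per extra atom and the elementary fact that $\Lambda(A_{n,m})\to 0$ because $\Lambda$ is finite---are routine.
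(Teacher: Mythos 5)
Your proposal is correct and rests on the same ingredients as the paper's proof: the coupling through shared atoms (with coins attached to blocks by rank, which is indeed determined by the restriction to $[p]$, so common atoms act identically on the two restrictions before the first disagreement), the estimate that an extra atom $(t,\zeta)$ disturbs the $[p]$-restriction with probability at most $\binom{p}{2}\zeta^2$, and Campbell's formula over the extra atoms whose total $\Lambda$-mass vanishes. The only difference is bookkeeping at the end: the paper encodes the disagreement level directly as $M=\min_i M_i$ with $M_i$ negative binomial and bounds $\E[M^{-3}]\le\sum_i\E[M_i^{-3}]$, whereas you split the expectation at a deterministic level $p$ and optimize over $p$; both yield the stated limit.
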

\begin{proof}
Set $n> m$ and 
observe that $\stp{\Pi_t^{(m)}}$ and $\stp{\Pi_t^{(n)}}$ have common jumps determined by the same paintbox partitions driven by the atoms of $\cup_{i=1}^m \mathcal{\tilde P}_i$, as well as a common Kingman component.
 On the other hand, $\stp{\Pi_t^{(n)}}$ has extra jumps determined by the atoms of $\cup_{i=m+1}^n \mathcal{\tilde P}_{i}$.
Hence the distance between both processes is related to these extra jumps.
More precisely, denote these extra atoms by $(t_i,\zeta_i), i\leq K$  
  and consider a family of independent negative binomial random variables $M_1,\dots, M_K$ such that $M_i$ has parameters 2 and $\zeta_i$.
   Its probability mass function is given by $\P(M_i= p)=(p-1) \zeta_i^2(1-\zeta_i)^{p-2}$, for $p\geq2$.
The variable $M=\min M_i$ provides a lower bound for the value $p$ such that 
$\restr{\Pi_{t}^{(n)}}{[p]}=\restr{\Pi_{t}^{(m)}}{[p]}$ for all $t\leq T$, since it gives the label of the second block, in increasing order, participating into a coalescence event produced by the extra atoms.
 Hence,
    $1/M$ is an a.s. upper bound for 
   $\sup_{0\leq t\leq T} \dist_{\PS{\infty}}(\Pi_t^{(n)}, \Pi_t^{(m)})$.
   
      Now, the collection $\{(t_i,M_i), i	\leq K\}$ is a Poisson point process on $[0,T]\times \{2,3,\cdots\}$ with intensity 
    $dt\otimes \int_{1/n}^{1/m} (p-1) \zeta^2(1-\zeta)^{p-2} \zeta^{-2}dp\Lambda(d\zeta)$ where $dp$ refers to the counting measure on $\{2,3,\cdots\}$.
    From the latter we obtain
    \begin{align*}
    \E[M^{-3}] &\leq \E\left[\sum_{i=1}^K M_i^{-3}\right] = 
    \int_0^T dt\int_{\frac{1}{n}}^{\frac{1}{m}}  \frac{\Lambda(d\zeta)}{\zeta^2} 
    \sum_{p\geq2} p^{-3} (p-1) \zeta^2(1-\zeta)^{p-2} \\
    &= \int_0^T dt\int_{\frac{1}{n}}^{\frac{1}{m}}   \Lambda(d\zeta)
    \sum_{p\geq2}  \frac{p-1}{p}p^{-2} (1-\zeta)^{p-2} \leq T\sum_{p\geq2}  p^{-2}{\Lambda}\left(\left[\frac{1}{n},\frac{1}{m}\right]\right).
    \end{align*}
and this quantity converges to 0 as $n,m\to\infty$.
\end{proof}

\begin{proof}[Proof of Theorem \ref{th:FellerDual}]
  Recall the independent Poisson point processes $(\mathcal{\tilde P}_n)_n$ on $(0,\infty)\times(0,1)$ with respective intensity $dt \otimes \zeta^{-2}\tilde\Lambda_n(d\zeta)$. 
  Fix $n\geq1$, set $\mathcal{ P}_n=\cup_{k=1}^n\mathcal{\tilde P}_k$ and denote its atoms by $(t_i, \zeta_i)$.
  Consider the  coupled process $(\Pi_t^{(n)}, \xi_t^{(n)})$  constructed from the same Poisson measure $\mathcal{ P}_n$.
     The coalescent coordinate $\Pi_t^{(n)}$ evolves through and independent Kingman component as well as by the merging of lineages at times $t_i$ according to independent coin tossing of probability $\zeta_i$.
     The  Lévy coordinate $\xi_t^{(n)}$ is standardly defined as in \cite{Bertoin1996}:
    $$
    \xi_t^{(n)} =(\drift-\sigma)t+\sigma B_t-\sum_{t_i \leq t} \log(1-\zeta_i)+t\int_0^\infty\frac{\log(1-\zeta)}{\zeta^2}1_{\{\zeta<1/2\}}\Lambda_n(d\zeta),
    $$
    where $\stp{B_t}$ is an independent Browninan motion.
    It is clear from its dynamics that the process $(\Pi_t^{(n)}, Z_t^{(n)})$  with $Z_t^{(n)}=e^{\xi_t^{(n)}}$,  is solution to the martingale problem associated to the generator \eqref{eq:dualGenerator} (replacing $\Lambda$ by $\Lambda_n$).
   
The limit as $n\to\infty$ is obtained thanks to Cauchy sequence arguments.
From Lemma \ref{lem:cauchy}, the sequence of processes $\left\{\stp{\Pi^{(n)}_t}\right\}_n$ is Cauchy in $L^3$.
From the proof of Theorem 1.1 in \cite{Bertoin2006}, the sequence of processes $\left\{\stp{\xi^{(n)}_t}\right\}_n$ is Cauchy in $L^2$.
Hence by defining the distance on $\PS{\infty}\times\mathbb R$ by
$$\dist((\pi,z), (\pi',z'))=\dist_{\PS{\infty}}(\pi,\pi')+|z-z'|
$$
we can show that the sequence  $\left\{\stp{\Pi^{(n)}_t, \xi^{(n)}_t}\right\}_n$ is Cauchy in $L^3$. Indeed,
	$$
	\E\left[\sup_{0\leq t\leq T} \dist((\Pi_t^{(n)},\xi_t^{(n)}), (\Pi_t^{(m)}, \xi_t^{(m)}))^3\right] \leq 4\E\left[\sup_{0\leq t\leq T} \dist_{\PS{\infty}}(\Pi_t^{(n)}, \Pi_t^{(m)})^3\right] +4	\E\left[\sup_{0\leq t\leq T} |\xi_t^{(n)}-\xi_t^{(m)}|^3\right] 
	$$
which converges to 0 as $n,m\to \infty$. As a consequence, the processes $\stp{\Pi^{(n)}_t, Z^{(n)}_t}$
{converge in distribution as $n\to\infty$. By Corollary \ref{cor:convergenceDualGenerators} and Lemma IV.5.1 in \cite{EthierKurtz86}  the limit $\stp{\Pi_t, Z_t}$ is a solution to the martingale problem associated to $(\Gdall, D_\Gdall)$ in \eqref{eq:dualGeneratorDomain}-\eqref{eq:dualGenerator}.}\par
Uniqueness and the Feller property are obtained with similar arguments as for $\stp{\nu_t}$ in the proof of Theorem \ref{th:MVPsmhExist}.
\end{proof}
	
{\bf Funding:}\\
Arno Siri-Jégousse was supported by DGAPA-PAPIIT-UNAM grant IN-105726 and the PASPA program of DGAPA-UNAM.\\
Alejandro H. Wences was supported by the ANR LabEx CIMI (grant ANR-11-LABX-0040) within the French State Programme “Investissements d’Avenir.”

\bibliographystyle{plain} 
\bibliography{Math}
\end{document}